\newtheorem{theorem}{Theorem}[section] 
\newtheorem{corollary}[theorem]{Corollary}
\newtheorem{lemma}[theorem]{Lemma}
\newtheorem{proposition}[theorem]{Proposition}
\numberwithin{equation}{section}
\begin{document}

\title{Morse-Novikov cohomology on foliated manifolds}

\author[]{Md. Shariful Islam}

\address{Department of Mathematics \\
University of Dhaka\\
Dhaka 1000, Bangladesh}

\email[M. S.~Islam]{mdsharifulislam@du.ac.bd}

\subjclass[2010]{57R30; 53C12; 58A14}

\keywords{foliation, cohomology, homotopy invariance, Hodge theory, Poincare duality}

\thanks{*Corresponding author: mdsharifulislam@du.ac.bd}

\begin{abstract}
The idea of Lichnerowicz or Morse-Novikov cohomology groups of a manifold has been utilized by many researchers to study important properties and invariants of a manifold. Morse-Novikov cohomology is defined using the differential $d_\omega=d+\omega\wedge$,
where $\omega$ is a closed $1$-form. We study Morse-Novikov cohomology relative to a foliation on a manifold and its homotopy invariance and then extend it to more general type of forms on a Riemannian foliation. We study the Laplacian and Hodge decompositions for the corresponding differential operators on reduced leafwise Morse-Novikov complexes. In the case of Riemannian foliations, we prove that the reduced leafwise Morse-Novikov cohomology groups satisfy the Hodge theorem and Poincar{\'e} duality. The resulting isomorphisms yield a Hodge diamond structure for leafwise Morse-Novikov cohomology.

\end{abstract}

\maketitle

\maketitle{}
\section{Introduction}
Consider an $n$-dimensional smooth manifold $M$; denote by $\Omega^{k}(M)$ the collection of all degree $k$ differential forms on $M$ and by $H^{k}(M)$ the corresponding  de Rham cohomology group. Let $\omega$ be a closed $1$-form that is not necessarily exact. We consider the twisted operator $d_{\omega}:\Omega^{k}(M)\rightarrow \Omega^{k+1}(M)$ defined by $d_{\omega}=d+\omega\wedge$, where $d$ is the usual exterior derivative, so that $(d_{\omega})^{2}=0$. The differential cochain complex $(\Omega^{*}(M),d_{\omega})$ is called the Morse-Novikov complex of the manifold $M$. The cohomology groups $H_{\omega}^{k}(M)$ of this cochain complex are called the Morse-Novikov or Lichnerowicz cohomology groups of $M$ and have been utilized by many researchers. Morse-Novikov cohomology was first studied by A. Lichnerowicz in \cite{lichnerowicz1977varietes}, and used in the context of Poisson geometry. The idea of Lichnerowicz has been exploited to study many properties of manifolds. In \cite {MR630459} and \cite{MR676612} S.P.Novikov proved a generalization of the Morse inequalities by comparing the ranks of these cohomology groups with combinatorial invariants derived from the zeros of the form $\omega$. Pazhintov \cite{pazhitnov1987analytic} gave an analytic proof of the real part of Novikov's inequalities. E. Witten used the Morse-Novikov cohomology for exact $\omega$ in his famous discovery \cite{witten1982supersymmetry} of what is now known as \textit{Witten deformation}. In this case Morse-Novikov cohomology is isomorphic to de Rham cohomology. M. Shubin and S. P. Novikov applied the deformation method to a rigorous treatment of eigenvalue limits of Witten Laplacians for more general $1$-forms and vector fields in \cite{MR856461} and \cite{MR1398925}. 
Many other researchers have extended and generalized this work, such as Braverman and Farber  \cite{MR1458239} in cases of nonisolated zeros of $1$-forms and vector fields. See \cite{MR2034601} for a good reference on these related topics. Alexandra Otiman studied Morse-Novikov cohomology for particular classes of closed $1$-forms in \cite{otiman2016morse}. I. Vaisman studied locally conformal symplectic manifolds in \cite{vaisman1980remarkable}, and  L. Ornea, and M. Verbitsky studied Morse-Novikov cohomology of locally conformally K{\"a}hler manifolds in  \cite{ornea2009morse}. In \cite{chen2019morse}, X. Chen showed that if a Riemannian manifold $M$ has almost non-negative sectional curvature and nonzero first de Rham cohomology group, then all the Morse-Novikov cohomology groups of $M$ vanish irrespective of the choice of the closed non-exact $1$-form $\omega$. In \cite{Meng2019}, L. Meng established an analogue of the Leray-Hirsch Theorem for de Rham cohomology and a blowup formula fo Dolbeault-Morse-Novikov cohomology on complex manifolds. Morse-Novikov cohomology theory has also been used to study locally conformal symplectic manifolds  (see \cite{vaisman1982generalized}, \cite{vaisman1980remarkable}, and \cite{vaisman1985locally}).

Two types of Morse-Novikov cohomology associated to foliations are basic (see \cite {MR107280}) and leafwise. Liviu Ornea and Vladimir Slesar studied basic Morse-Novikov cohomology in \cite{ornea2016basic}. K. Richardson and G. Habib used basic Morse-Novikov cohomology to prove that the basic signature and the \'Alvarez class of a Riemannian foliation are homotopy invariants \cite{habib2017homotopy}. They have also used a modified differential as in Morse-Novikov cohomology to define a twisted basic cohomology for Riemannian foliations that satisfies Poincar{\'e} duality \cite{habib2013modified}. J. A. \'Alvarez Lopez, Y. Kordyukov, and E. Leichtnam studied leafwise Hodge decomposition on Riemannian foliations with bounded geometry and extended the Morse-Novikov differential complex \cite{lopez2019analysis}. 

This paper is constructed as follows. Using $d+\omega\wedge$ as the differential for a leafwise closed $1$-form $\omega$, we study leafwise Morse-Novikov cohomology groups whose isomorphism classes turn out to be smooth invariants of the foliation. In the cases where $\omega$ is truly a closed $1$-form on the manifold, one further obtains Morse-Novikov cohomology groups from the foliation. In Section 3 we study the basic properties of leafwise Morse-Novikov cohomology groups, including the homotopy axiom in Proposition \ref{ta13}, and foliated homotopy invariance in Corollary \ref{ta14}. With the additional assumption that the foliation is Riemannian, we give a proof of the Hodge decomposition in Corollary \ref{ta20} and Poincar{\'e} duality in Corollary \ref{ta21}. We extend these results to more general settings of forms of $p,q$ type: homotopy axioms for general leafwise Morse-Novikov cohomology in Proposition \ref{ta24}, Hodge decomposition for general leafwise Morse-Novikov cohomology in Theorem \ref{ta25}, Poincar{\'e} duality for general leafwise Morse-Novikov cohomology in Proposition \ref{ta27}. The assumption that the foliation is Riemannian is required to obtain Hodge theory and Poincar{\'e} duality; for general smooth foliations, those results are false, even for the case when $\omega=0$. In the remainder of Section \ref{lastsection}, the isomorphisms between the Morse-Novikov cohomology groups are discussed, leading to Corollary \ref{hodgediamond}, that shows the Hodge diamond structure.

Much but not all of the results in this paper were part of the authors Ph.D thesis \cite{MR4060632}.

\section{Leafwise de~Rham cohomology}
In this section we review notations and known results. Let $M$ be a closed compact oriented Riemannian manifold. Suppose we are given a smooth foliation $\left(M,\mathcal{F}\right)$ on $M$. Let $T\mathcal{F}$ denote the leafwise tangent bundle and  $T^{*}\mathcal{F}$ denotes its dual bundle. Let the conormal bundle $N^{*}_{x}\mathcal{F}$ be defined at each point $x\in M$ as the set of all linear functionals that  map each vector in $T_{x}\mathcal{F}$ to zero, and this bundle can be canonically identified with a subbundle of $T^{*}_{x}M$ independent of the metric. Using the metric, the normal bundle $Q=TM\slash T\mathcal{F}$ may be uniquely identified with a subbundle $N\mathcal{F}=T\mathcal{F}^{\perp}$ of $TM$. Also, we can identify the dual bundle $T^{*}\mathcal{F}$ with the set of covectors that kill $N\mathcal{F}$. Now we can decompose all differential forms using
\begin{equation}
\label{one}
\Lambda^{u,v}(M,\mathcal{F})=\Lambda^{u} N^{*}\mathcal{F}\wedge\Lambda^{v} T^{*}\mathcal{F} 
\end{equation}
Let $\Omega^{u,v}(M,\mathcal{F})=\Gamma\Lambda^{u,v}(M,\mathcal{F})$. The exterior derivative can then be decomposed as $d=d_{0,1}+d_{1,0}+d_{2,-1}$ with
\[
d_{i,j}\omega\in \Omega^{u+i,v+j}(M,\mathcal{F})
\]
for all $\omega\in\Omega^{u,v}(M,\mathcal{F})$. Then it is easy to see that since
$d^2=0$, we also have $d_{0,1}^2=0$.

The elements of $\Omega^{0,k}\left(M,\mathcal{F}\right)$ are called leafwise $k$-forms. Let $\Gamma\left(T\mathcal{F}\right)$ be the set of smooth sections of $T\mathcal{F}$. If $X,Y\in\Gamma\left(T\mathcal{F}\right)$, then by the Frobenius theorem $[X,Y]\in\Gamma\left(T\mathcal{F}\right)$. The leafwise exterior differential operator $d_{0,1}^{k}=d_{\mathcal{F}}^{k}:\Omega^{0,k}\left(M,\mathcal{F}\right)\rightarrow\Omega^{0,k+1}\left(M,\mathcal{F}\right)$ may also be defined by
\begin{eqnarray*}
d_{\mathcal{F}}^{k}\omega\left(X_{0},\cdots,X_{k+1}\right)=\sum_{0\le i\le k}(-1)^{i}[X_{i}\omega\left(X_{0},\cdots,\hat{X_{i}},\cdots,X_{k}\right)]\\
+\sum_{0\le i<j\le k}(-1)^{i+j}\omega\left([X_{i},X_{j}],X_{0},\cdots,\hat{X_{i}},\cdots,\hat{X_{j}},\cdots, X_{k}\right)
\end{eqnarray*}
for $X_{0},\cdots,X_{k+1}\in\Gamma\left(T\mathcal{F}\right)$. The differential operator $d_{\mathcal{F}}$ is the restriction of the usual differential on differential forms on the leaves of $\mathcal{F}$. Similar to the usual exterior differential, the leafwise differential satisfies $d_{\mathcal{F}}^{k+1}\circ d_{\mathcal{F}}^{k}=0$. For $k\ge 0$, the $k^{\mathrm{th}}$ leafwise cohomology  group $H^{k}\left(M,d_{\mathcal{F}}\right)$ is the $k^{\mathrm{th}}$ cohomology group 
$$H^{k}\left(M,d_{\mathcal{F}}\right)=\frac{\ker d_{\mathcal{F}}^{k}}{\mathrm{im}\,d_{\mathcal{F}}^{k-1}}$$
of the cochain complex $\left(\Omega^{0,*}\left(M,\mathcal{F}\right),d_{\mathcal{F}}\right)$.

For the purpose of having Laplacian and Hodge decompositions, we need to consider reduced leafwise cohomology $$\bar{H}^{k}\left(M,d_{\mathcal{F}}\right)=\frac{\ker d_{\mathcal{F}}^{k}}{\overline{\mathrm{im}\,d_{\mathcal{F}}^{k-1}}}.$$ 
Here the closure $\overline{\mathrm{im}\, d_{\mathcal{F}}^{k-1}}$ is taken with respect to the Frech{\'e}t topology on $\Omega^{0,k}\left(M,\mathcal{F}\right)$.
The cup product induced from exterior product of forms makes $\bar{H}^{*}\left(M,d_{\mathcal{F}}\right)$ into a graded commutative algebra over $C^{\infty}(M)$.

Let $f:M\rightarrow N$ be a smooth map of the foliated manifold which maps leaves into leaves. Then the pullback maps 
$$f^{*}:\Gamma\left(\Lambda^k T^*\mathcal{F}_N\right)\rightarrow \Gamma\left(\Lambda^k T^*\mathcal{F}_M\right)$$ 
are defined for all $k$. They commute with $d_{\mathcal{F}}$ and respect the exterior product; therefore they induce a continuous map of the reduced cohomology ring. 
$$f^{*}:\bar{H}^{k}\left(N,d_{\mathcal{F}_{0}}\right)\rightarrow \bar{H}^{k}\left(M,d_{\mathcal{F}_{1}}\right).$$ 
Such maps are called foliated maps.
Two smooth foliated maps $f, g : \left(N, \mathcal{F}_{0}\right)\rightarrow\left(M, \mathcal{F}_{1}\right)$ between two
foliated manifolds are leafwise homotopic if there is a map $F : N \times [0, 1]\rightarrow M$ such that if $F_{t}$ denotes the restriction $F_{t} = F|_{N\times{t}}:N\rightarrow M$ for $t\in[0, 1]$, then $F|_{0} = f, F|_{1} = g$, $F_{t}\left(\mathcal{F}_{0}\right)\in\mathcal{F}_{1}$ for all $t\in[0, 1]$, and for every $x\in N$ the points $F_{t_{1}}(x), F_{t_{2}}(x)$ lie in the same leaf of $\mathcal{F}_{1}$ for all $t_{1}, t_{2} \in [0, 1]$. Thus, a leafwise homotopy consists of leaf-preserving maps. Denote the identity maps of $\left(N,\mathcal{F}_{0}\right)$, $\left(M,\mathcal{F}_{1}\right)$ by $l_{\mathcal{F}_{0}}$, $l_{\mathcal{F}_{1}}$ respectively. A leafwise map $f:\left(N,\mathcal{F}_{0}\right)\rightarrow\left(M,\mathcal{F}_{1}\right)$ is a leafwise homotopy equivalence if there exists a leafwise map $g:\left(M,\mathcal{F}_{1}\right)\rightarrow\left(N,\mathcal{F}_{0}\right)$ with $f\circ g$ is leafwise homotopic to $l_{\mathcal{F}_{0}}$ and $g\circ f$ is leafwise homotopic to $l_{\mathcal{F}_{1}}$.

\begin{proposition}
\label{ta4} 
\cite[Theorem I, 3.2]{hoster2001derived}) If the maps $f, g : \left(N, \mathcal{F}_{0}\right)\rightarrow\left(M, \mathcal{F}_{1}\right)$ between two
foliated manifolds are leafwise homotopic, then $f^{*}=g^{*}:H^{k}\left(M,d_{\mathcal{F}_{1}}\right)\rightarrow H^{k}\left(N,d_{\mathcal{F}_{0}}\right)$. That is, leafwise homotopic maps induce the same map on leafwise cohomology groups. 
\end{proposition}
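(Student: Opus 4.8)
The plan is to reduce the statement to the homotopy invariance of the two endpoint inclusions and then produce an explicit chain homotopy by integration along the interval. First I would equip the product $N\times[0,1]$ with the foliation $\widetilde{\mathcal{F}}$ whose leaves are $L\times[0,1]$ for $L$ a leaf of $\mathcal{F}_0$; its leafwise tangent bundle is $T\mathcal{F}_0\oplus T[0,1]$. The crucial use of the hypothesis is that the two leafwise-homotopy conditions are precisely what is needed to make $F$ itself a foliated map $(N\times[0,1],\widetilde{\mathcal{F}})\to(M,\mathcal{F}_1)$: since each $F_t$ sends the leaf $L$ into a single leaf of $\mathcal{F}_1$, and since $F_{t_1}(x),F_{t_2}(x)$ always lie in a common leaf, one checks that $F(L\times[0,1])$ is contained in one leaf of $\mathcal{F}_1$. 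Writing $j_s:N\to N\times[0,1]$, $j_s(x)=(x,s)$, for the endpoint inclusions (which are evidently foliated), we have $f=F\circ j_0$ and $g=F\circ j_1$, so by functoriality $f^{*}=j_0^{*}\circ F^{*}$ and $g^{*}=j_1^{*}\circ F^{*}$. It therefore suffices to prove $j_0^{*}=j_1^{*}$ on $H^{k}(N\times[0,1],d_{\widetilde{\mathcal{F}}})$.

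Next I would build the chain homotopy. Every leafwise form on $(N\times[0,1],\widetilde{\mathcal{F}})$ decomposes uniquely as $\alpha=\alpha_0+dt\wedge\alpha_1$, where $\alpha_0$ and $\alpha_1$ contain no $dt$ and restrict at each fixed $t$ to leafwise forms on $(N,\mathcal{F}_0)$; correspondingly the leafwise differential splits as $d_{\widetilde{\mathcal{F}}}=d_{\mathcal{F}_0}+dt\wedge\partial_t$. I would then define the operator $K:\Omega^{0,k}(N\times[0,1],\widetilde{\mathcal{F}})\to\Omega^{0,k-1}(N,\mathcal{F}_0)$ by fiber integration, $K\alpha=\int_0^1\alpha_1\,dt$. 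A direct computation with these decompositions, identical in form to the de Rham case, yields the homotopy identity $j_1^{*}\alpha-j_0^{*}\alpha=d_{\mathcal{F}_0}(K\alpha)+K(d_{\widetilde{\mathcal{F}}}\alpha)$, with the difference of the two restrictions produced by the fundamental theorem of calculus in the $t$ variable. Passing to cohomology, the exact terms vanish and we obtain $j_0^{*}=j_1^{*}$, and composing with $F^{*}$ gives $f^{*}=g^{*}$.

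The step I expect to be the main obstacle is verifying that $K$ genuinely lands in smooth leafwise forms and that the split of $d_{\widetilde{\mathcal{F}}}$ behaves as claimed: one must check that integration over the compact fiber $[0,1]$ preserves smoothness and the leafwise structure, so that $K\alpha$ is truly a section of $\Lambda^{k-1}T^{*}\mathcal{F}_0$, and that no normal or $d_{2,-1}$-type components of the ambient exterior derivative interfere, since here we work only with the purely leafwise differential on the product foliation. Once the product foliation $\widetilde{\mathcal{F}}$ and its leafwise calculus are set up correctly, the remaining algebra is routine and mirrors the classical proof of homotopy invariance of de Rham cohomology.
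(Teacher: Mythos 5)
Your argument is correct, and it is essentially the standard proof of this fact; note that the paper itself gives no proof of Proposition \ref{ta4} at all, but simply cites it from the reference, where the argument is exactly this chain-homotopy-via-fiber-integration scheme. Your reduction is sound: the two conditions in the paper's definition of leafwise homotopy do force $F(L\times[0,1])$ to lie in a single leaf of $\mathcal{F}_{1}$ (since $F_{t_1}(x)$ and $F_{t_2}(x)$ lying in a common leaf identifies the image leaves of $F_{t_1}|_{L}$ and $F_{t_2}|_{L}$), so $F$ is a foliated map for the product foliation and the problem reduces to $j_0^{*}=j_1^{*}$; the decomposition $\alpha=\alpha_0+dt\wedge\alpha_1$, the splitting $d_{\widetilde{\mathcal{F}}}=d_{\mathcal{F}_0}+dt\wedge\partial_t$, and the homotopy identity $j_1^{*}\alpha-j_0^{*}\alpha=d_{\mathcal{F}_0}(K\alpha)+K(d_{\widetilde{\mathcal{F}}}\alpha)$ all go through exactly as in the de Rham case (up to harmless sign conventions in how $dt$ is ordered). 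The only caveat worth flagging is that the paper's definition of leafwise homotopy asks only for a map $F:N\times[0,1]\to M$, whereas your fiber integration and the splitting of $d_{\widetilde{\mathcal{F}}}$ require $F$ to be smooth (or at least smooth in $t$); one should either read smoothness into the definition, as is implicitly done throughout the paper, or insert a standard smoothing step. With that understood, your proof is complete.
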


\begin{corollary}
\label{ta5}
If a map $f : \left(N, \mathcal{F}_{0}\right)\rightarrow\left(M, \mathcal{F}_{1}\right)$ is a smooth foliated homotopy equivalence, then $f^{*}$ induces an isomorphism between $H^{k}\left(N, d_{\mathcal{F}_{0}}\right)$ and $H^{k}\left(M, d_{\mathcal{F}_{1}}\right)$.
\end{corollary}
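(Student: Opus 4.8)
The plan is to deduce the statement purely formally from the homotopy invariance established in Proposition \ref{ta4}, together with the contravariant functoriality of the leafwise pullback. First I would invoke the definition of foliated homotopy equivalence to produce a foliated map $g:(M,\mathcal{F}_1)\to(N,\mathcal{F}_0)$ that is a homotopy inverse for $f$, so that the composites $f\circ g:M\to M$ and $g\circ f:N\to N$ are leafwise homotopic to the respective identity maps $l_{\mathcal{F}_1}$ and $l_{\mathcal{F}_0}$. Because $g$ is itself foliated, its pullback induces a map $g^{*}:H^{k}(N,d_{\mathcal{F}_0})\to H^{k}(M,d_{\mathcal{F}_1})$ on leafwise cohomology, running in the direction opposite to $f^{*}$.

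Next I would record that the leafwise pullback is contravariantly functorial, which gives $(f\circ g)^{*}=g^{*}\circ f^{*}$ and $(g\circ f)^{*}=f^{*}\circ g^{*}$ as endomorphisms of $H^{k}(M,d_{\mathcal{F}_1})$ and $H^{k}(N,d_{\mathcal{F}_0})$ respectively; this follows from the chain rule for pullbacks of leafwise forms and the fact that a composition of foliated maps is again foliated. Since the identity map induces the identity on cohomology, it then suffices to combine these identities with Proposition \ref{ta4}: applying that proposition to the leafwise homotopic pairs $f\circ g\simeq l_{\mathcal{F}_1}$ and $g\circ f\simeq l_{\mathcal{F}_0}$ yields $g^{*}\circ f^{*}=\mathrm{id}$ on $H^{k}(M,d_{\mathcal{F}_1})$ and $f^{*}\circ g^{*}=\mathrm{id}$ on $H^{k}(N,d_{\mathcal{F}_0})$. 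Hence $f^{*}$ admits $g^{*}$ as a two-sided inverse and is therefore an isomorphism in each degree $k$.

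The argument is entirely formal, so there is no real analytic obstacle; the only points requiring care are bookkeeping ones. I would track the directions of $f^{*}$ and $g^{*}$ correctly (the pullback reverses arrows, so $f^{*}$ runs from $H^{k}(M,d_{\mathcal{F}_1})$ to $H^{k}(N,d_{\mathcal{F}_0})$), and I would confirm that the two composites are genuine foliated, leaf-preserving maps so that Proposition \ref{ta4} applies to them. Note also that this corollary concerns the unreduced groups $H^{k}$, matching the hypothesis of Proposition \ref{ta4}, so no subtlety about passing to the reduced cohomology $\bar H^{k}$ enters at this stage.
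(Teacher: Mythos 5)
Your argument is correct and is exactly the standard deduction the paper intends (the paper states Corollary \ref{ta5} without written proof, as an immediate consequence of Proposition \ref{ta4}): take a foliated homotopy inverse $g$, use contravariant functoriality of the leafwise pullback, and apply Proposition \ref{ta4} to the homotopies $f\circ g\simeq l_{\mathcal{F}_1}$ and $g\circ f\simeq l_{\mathcal{F}_0}$ to see that $g^{*}$ is a two-sided inverse of $f^{*}$. Your bookkeeping of directions and your remark that the statement concerns the unreduced groups are both accurate, so nothing further is needed.
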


\section{Leafwise Morse-Novikov Cohomology and Hodge Theory}

From now we assume that our foliation is Riemannian, characterized by the existence of a bundle-like metric $g$ such that a geodesic of the metric $g$ is orthogonal to all leaves that it meets whenever it is orthogonal to one of them. We assume that a bundle like metric has been chosen.   

Let $M$ be an oriented manifold endowed with a foliation $\mathcal{F}$ of dimension $p$. The graded Frech{\'e}t space $\Omega^{0,*}\left(M,\mathcal{F}\right)$ can be endowed with the natural metric 
$$\left(\alpha,\beta\right)=\int_{M}\left\langle\alpha,\beta\right\rangle_{\mathcal{F}}\mathrm{vol}.$$
In this formula $\left\langle,\right\rangle_{\mathcal{F}}$ is the Riemannian metric on $\Lambda T^{*}\mathcal{F}$ induced from the Riemannian metric $g$ on $M$, and $\mathrm{vol}$ is the volume form associated to the metric $g$. We denote the formal adjoint of the leafwise differential $d_{\mathcal{F}}$ with respect to this inner product by $\delta_{\mathcal{F}}$; then the corresponding Laplacian is 
$$\Delta_{\mathcal{F}}=d_{\mathcal{F}}\delta_{\mathcal{F}}+\delta_{\mathcal{F}}d_{\mathcal{F}}.$$
Since $\mathcal{F}$ is Riemannian, the restriction of $\delta_{\mathcal{F}}$ to any leaf is the codifferential of the leaf with respect to the induced metric \cite[Lemma 3.2]{lopez2001long}, i.e. $$\left(\delta_{\mathcal{F}}\alpha\right)|_{F}=\delta_{\mathcal{F}}\left(\alpha\right)|_{F}\text{ for all }\alpha\in\Omega^{0,k}\left(M,\mathcal{F}\right),$$
where $F$ denotes a leaf of the foliation. Now we assume that the tangent bundle $T\mathcal{F}$ is orientable. The choice of an orientation determines a volume form $\chi_{\mathcal{F}}\in\Omega^{0,p}\left(M,\mathcal{F}\right)$. Now we can define leafwise Hodge star-operator
 $$*_{\mathcal{F}}:\Lambda^{0,k}T^{*}\mathcal{F}\rightarrow\Lambda^{0,p-k}T^{*}\mathcal{F}\text{ for each } k\text{ and }x\in M,$$ 
and it is determined by the relation 
$$\alpha\wedge*_{\mathcal{F}}\beta=\left\langle\alpha,\beta\right\rangle\chi_{\mathcal{F}},\text{ for }\alpha,\beta\in\Lambda^{0,k}T^{*}_{x}\mathcal{F}.$$
 This fibrewise star-operator determines the leafwise star-operator
 $$*_{\mathcal{F}}:\Omega^{0,k}\left(M,\mathcal{F}\right)\rightarrow\Omega^{0,p-k}\left(M,\mathcal{F}\right)\text{ for each } k.$$ 

Now we state some important properties of leafwise cohomology. Suppose $M$ is compact, and $\mathcal{F}$ is a $p$-dimensional oriented Riemannian foliation of $M$ with a bundle-like metric $g$.

\begin{proposition}
\label{ta7}
\cite[Theorem 0.2]{deninger2001counterexample} The map $\phi:\ker\Delta^{k}_{\mathcal{F}}\rightarrow\bar{H}^{k}\left(M,d_{\mathcal{F}}\right)$ defined by $\phi\left(\omega\right)=\omega\,\mathrm{mod}\,\overline{\mathrm{im}\,d^{k}_{\mathcal{F}}}$ is a topological isomorphism of Frech{\'e}t spaces. This isomorphism, in general, does not hold for non-Riemannian foliations.
\end{proposition}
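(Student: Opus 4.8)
The statement is a leafwise Hodge theorem, so the plan is to deduce it from an orthogonal Hodge-type decomposition of the Fréchet space of leafwise forms and then read off the isomorphism formally. Writing $\mathcal{H}^{k}=\ker\Delta_{\mathcal{F}}^{k}$ for the space of leafwise harmonic $k$-forms, the target decomposition is
$$\Omega^{0,k}(M,\mathcal{F})=\mathcal{H}^{k}\oplus\overline{\mathrm{im}\,d_{\mathcal{F}}^{k-1}}\oplus\overline{\mathrm{im}\,\delta_{\mathcal{F}}^{k+1}}.$$
First I would check that $\phi$ is well defined. Since $M$ is compact the inner product $(\,,\,)$ is positive definite, so the identity $(\Delta_{\mathcal{F}}\omega,\omega)=\|d_{\mathcal{F}}\omega\|^{2}+\|\delta_{\mathcal{F}}\omega\|^{2}$ forces every $\omega\in\mathcal{H}^{k}$ to satisfy $d_{\mathcal{F}}\omega=0$ and $\delta_{\mathcal{F}}\omega=0$; in particular $\omega$ is $d_{\mathcal{F}}$-closed and $\phi(\omega)$ is a genuine class in $\bar H^{k}$. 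Granting the decomposition, the rest is formal: harmonic forms are orthogonal to $\overline{\mathrm{im}\,d_{\mathcal{F}}^{k-1}}$ (from $(\omega,d_{\mathcal{F}}\beta)=(\delta_{\mathcal{F}}\omega,\beta)=0$ and continuity), which gives injectivity, while $\ker d_{\mathcal{F}}^{k}$ is orthogonal to $\overline{\mathrm{im}\,\delta_{\mathcal{F}}^{k+1}}$, so the decomposition restricts to $\ker d_{\mathcal{F}}^{k}=\mathcal{H}^{k}\oplus\overline{\mathrm{im}\,d_{\mathcal{F}}^{k-1}}$; quotienting by $\overline{\mathrm{im}\,d_{\mathcal{F}}^{k-1}}$ identifies $\bar H^{k}$ with $\mathcal{H}^{k}$ through $\phi$, giving surjectivity.

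The analytic content is the decomposition itself, and it is exactly here that the Riemannian hypothesis is indispensable. The operator $\Delta_{\mathcal{F}}$ is elliptic only along the leaves (tangentially, not transversally, elliptic), so the classical elliptic Hodge theory on $M$ does not apply; in particular $\mathrm{im}\,d_{\mathcal{F}}$ need not be closed, which is precisely why one passes to reduced cohomology and takes Fréchet closures. The plan is to extend $\Delta_{\mathcal{F}}$, $d_{\mathcal{F}}$ and $\delta_{\mathcal{F}}$ to the $L^{2}$-completion, where the general weak Hodge (Hilbert-complex) decomposition attached to a self-adjoint operator gives $L^{2}=\ker\overline{\Delta_{\mathcal{F}}}\oplus\overline{\mathrm{im}\,d_{\mathcal{F}}}\oplus\overline{\mathrm{im}\,\delta_{\mathcal{F}}}$, the last two summands being orthogonal because $\delta_{\mathcal{F}}^{2}=0$. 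This already holds rather generally; the role of the bundle-like metric is to guarantee the regularity needed to pull the decomposition back to smooth forms.

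The final step is this regularity, which I would obtain from the long-time behaviour of the leafwise heat flow $e^{-t\Delta_{\mathcal{F}}}$: for a Riemannian foliation it converges, as $t\to\infty$, to the projection onto $\mathcal{H}^{k}$, and this projection preserves smoothness. Leafwise elliptic regularity makes $L^{2}$ harmonic forms smooth along the leaves, and the transverse control supplied by the bundle-like metric (together with the fact, recorded after \cite[Lemma 3.2]{lopez2001long}, that $\delta_{\mathcal{F}}$ restricts to the honest leafwise codifferential) upgrades this to genuine smoothness, so $\ker\overline{\Delta_{\mathcal{F}}}=\mathcal{H}^{k}$ and the $L^{2}$ decomposition descends to a topological direct sum of Fréchet spaces. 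Then $\phi$ is a continuous bijection of Fréchet spaces, and the open mapping theorem makes it a topological isomorphism.

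The hardest part is plainly this closed-range-plus-regularity package: for a merely smooth foliation the leafwise heat flow need not converge and the transverse regularity is lost, so both the decomposition and the isomorphism fail — this is the content of the cited counterexample in \cite{deninger2001counterexample}, and it explains why the Riemannian assumption cannot be dropped.
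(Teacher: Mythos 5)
The paper does not prove this proposition at all: it is quoted as a known result, with the statement (and the failure for non-Riemannian foliations) attributed to \cite[Theorem 0.2]{deninger2001counterexample}, the positive half of which is in turn the leafwise Hodge theorem of \'Alvarez L\'opez and Kordyukov \cite{lopez2001long}. So there is no in-paper argument to compare yours against; what you have written is a reconstruction of the proof from the cited literature, and as a sketch it is essentially the right one. The formal part is correct and complete: harmonicity implies $d_{\mathcal{F}}$- and $\delta_{\mathcal{F}}$-closedness by integration by parts on the closed manifold $M$, orthogonality of $\mathcal{H}^{k}$ to $\overline{\mathrm{im}\,d_{\mathcal{F}}^{k-1}}$ gives injectivity, the restriction of the three-term decomposition to $\ker d_{\mathcal{F}}^{k}$ gives surjectivity, and the open mapping theorem for Fr\'echet spaces upgrades the continuous bijection to a topological isomorphism. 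You also correctly locate the entire analytic burden in the smooth (Fr\'echet) Hodge decomposition and correctly identify the long-time convergence of the leafwise heat flow as the mechanism that is available exactly for bundle-like metrics and fails in the Deninger--Singhof counterexample. The one soft spot is your intermediate claim that $\ker\overline{\Delta_{\mathcal{F}}}=\mathcal{H}^{k}$, i.e.\ that every $L^{2}$ leafwise-harmonic form is globally smooth: leafwise ellipticity gives only leaf-direction regularity, and the ``transverse control'' you invoke to upgrade it is not a routine consequence of the bundle-like condition --- it is essentially the whole theorem. The argument in \cite{lopez2001long} avoids this by working directly in the Fr\'echet space $\Omega^{0,k}(M,\mathcal{F})$: one shows $e^{-t\Delta_{\mathcal{F}}}$ converges there in the $C^{\infty}$ topology to a continuous projection with image $\mathcal{H}^{k}$ and kernel $\overline{\mathrm{im}\,d_{\mathcal{F}}}\oplus\overline{\mathrm{im}\,\delta_{\mathcal{F}}}$, so the full $L^{2}$ kernel never needs to be identified with the smooth one. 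If you rephrase your last step that way, the sketch is a faithful outline of the standard proof.
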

Under the same assumptions, the next deep result is due to {\'A}lvarez L{\'o}pez and Kordyukov.
\begin{theorem}
\label{ta8}
\cite[Corollary C]{lopez2001long} The Hodge star-operator induces an isomorphism 
$$*_{\mathcal{F}}:\ker\Delta^{k}_{\mathcal{F}}\rightarrow\ker\Delta^{p-k}_{\mathcal{F}}.$$
Moreover $*_{\mathcal{F}}$ commutes with $\Delta^{k}_{\mathcal{F}}$ up to a sign. From the previous proposition we have the following isomorphism $$*_{\mathcal{F}}:\bar{H}^{k}\left(M,d_{\mathcal{F}}\right)\rightarrow\bar{H}^{p-k}\left(M,d_{\mathcal{F}}\right).$$
\end{theorem}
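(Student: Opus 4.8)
The plan is to reduce the statement to the formal star-operator calculus relating $*_{\mathcal{F}}$, $d_{\mathcal{F}}$ and $\delta_{\mathcal{F}}$, deduce the Laplacian commutation and the harmonic-space isomorphism from it, and then transport that isomorphism to reduced cohomology via Proposition \ref{ta7}. First I would record the fibrewise identity $*_{\mathcal{F}}*_{\mathcal{F}} = (-1)^{k(p-k)}$ on $\Lambda^{0,k}T^*\mathcal{F}$, which holds exactly as in the classical computation because $*_{\mathcal{F}}$ is defined leaf by leaf by the formula for the Riemannian star operator of a $p$-dimensional oriented inner product space. In particular $*_{\mathcal{F}}$ is a fibrewise isometry with inverse $(-1)^{k(p-k)}*_{\mathcal{F}}$, hence invertible in each degree.

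The crucial step is to express $\delta_{\mathcal{F}}$ through $*_{\mathcal{F}}$ and $d_{\mathcal{F}}$, and this is exactly where the Riemannian hypothesis enters decisively. Because $\mathcal{F}$ is Riemannian, the restriction of $\delta_{\mathcal{F}}$ to each leaf $F$ is the codifferential of $F$ for the induced metric (as recalled before the statement, from \cite[Lemma 3.2]{lopez2001long}), and $*_{\mathcal{F}}$ restricts to the star operator of $F$. The classical identity on each leaf therefore assembles into the leafwise identity
$$\delta_{\mathcal{F}} = (-1)^{p(k+1)+1}\,*_{\mathcal{F}} d_{\mathcal{F}} *_{\mathcal{F}} \quad\text{on } \Omega^{0,k}(M,\mathcal{F}).$$
This is precisely the point at which a general non-Riemannian foliation would fail, since then $\delta_{\mathcal{F}}$ is not the leafwise codifferential and the identity breaks down.

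From this identity together with $*_{\mathcal{F}}*_{\mathcal{F}} = (-1)^{k(p-k)}$ I would extract the two intertwining relations $*_{\mathcal{F}} d_{\mathcal{F}} = \pm\,\delta_{\mathcal{F}} *_{\mathcal{F}}$ and $*_{\mathcal{F}}\delta_{\mathcal{F}} = \pm\, d_{\mathcal{F}} *_{\mathcal{F}}$, with signs depending only on $p$ and $k$. Substituting into $\Delta_{\mathcal{F}} = d_{\mathcal{F}}\delta_{\mathcal{F}} + \delta_{\mathcal{F}} d_{\mathcal{F}}$ gives $*_{\mathcal{F}}\Delta_{\mathcal{F}} = \pm\,\Delta_{\mathcal{F}} *_{\mathcal{F}}$, which is the asserted commutation up to a sign. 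It follows at once that $\Delta_{\mathcal{F}}^k\omega = 0$ implies $\Delta_{\mathcal{F}}^{p-k}(*_{\mathcal{F}}\omega) = 0$, so $*_{\mathcal{F}}$ maps $\ker\Delta_{\mathcal{F}}^k$ into $\ker\Delta_{\mathcal{F}}^{p-k}$; running the same argument in complementary degree and using fibrewise invertibility shows this restriction is an isomorphism of harmonic spaces.

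Finally I would invoke Proposition \ref{ta7}, which gives a topological isomorphism $\ker\Delta_{\mathcal{F}}^k \cong \bar{H}^k(M,d_{\mathcal{F}})$ in every degree; transporting the harmonic-space isomorphism along these yields $*_{\mathcal{F}}:\bar{H}^k(M,d_{\mathcal{F}}) \to \bar{H}^{p-k}(M,d_{\mathcal{F}})$. The main obstacle is the analytic content already packaged in Proposition \ref{ta7}: identifying $\ker\Delta_{\mathcal{F}}^k$ with reduced cohomology requires the leafwise Hodge decomposition for the merely transversally elliptic operator $\Delta_{\mathcal{F}}$, which is where the Riemannian structure does its real work. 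Granting that, the remainder is the formal star-operator calculus above, whose only genuine subtlety is the careful bookkeeping of the degree- and dimension-dependent signs so that they cancel in the Laplacian.
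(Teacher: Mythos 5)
Your argument is correct, but note that the paper does not prove Theorem \ref{ta8} at all: it is quoted verbatim as Corollary C of \cite{lopez2001long}, with the analytic content delegated to that reference. What you have reconstructed is essentially the formal star-operator calculus that the paper itself only deploys \emph{later}, for the twisted and $(u,v)$-graded generalizations: your identity $\delta_{\mathcal{F}}=(-1)^{p(k+1)+1}*_{\mathcal{F}}d_{\mathcal{F}}*_{\mathcal{F}}$ is Lemma \ref{ta17}, your intertwining relations are the displayed identities $*_{\mathcal{F}}d_{\mathcal{F}}^{*}=(-1)^{k}d_{\mathcal{F}}*_{\mathcal{F}}$ and $d_{\mathcal{F}}^{*}*_{\mathcal{F}}=(-1)^{k+1}*_{\mathcal{F}}d_{\mathcal{F}}$ preceding Proposition \ref{ta19}, and your commutation and duality steps are the $\omega=0$ specializations of Proposition \ref{ta19} and Corollaries \ref{ta20}--\ref{ta21}. (In fact the signs cancel completely: $*_{\mathcal{F}}\Delta_{\mathcal{F}}=\Delta_{\mathcal{F}}*_{\mathcal{F}}$ on the nose, which is stronger than the ``up to a sign'' you claim.) You also correctly locate the two places where the Riemannian hypothesis is indispensable: first, the fact that the global $L^{2}$-adjoint $\delta_{\mathcal{F}}$ restricts on each leaf to the leaf codifferential --- this is where a mean-curvature correction would otherwise appear, and it is exactly \cite[Lemma 3.2]{lopez2001long} as the paper states; second, the identification $\ker\Delta_{\mathcal{F}}^{k}\cong\bar{H}^{k}(M,d_{\mathcal{F}})$ of Proposition \ref{ta7}, which is the genuinely deep heat-flow theorem and which you rightly treat as a black box rather than something recoverable from the star calculus. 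So your route is a legitimate proof modulo Proposition \ref{ta7}, and it buys something the bare citation does not: it makes visible that, granted the leafwise Hodge theorem, Poincar\'e duality for reduced leafwise cohomology is purely formal.
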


Let $\omega$ be a leafwise closed $1$-form, which is not necessarily leafwise exact. We consider the twisted operator $d^{\omega}_{\mathcal{F}}:\Omega^{0,k}\left(M,\mathcal{F}\right)\rightarrow \Omega^{0,k+1}\left(M,\mathcal{F}\right)$ defined by $d^{\omega}_{\mathcal{F}}=d_{\mathcal{F}}+\omega\wedge$, where $d_{\mathcal{F}}$ is the exterior derivative along the leaf. Since $d_{\mathcal{F}}\circ d_{\mathcal{F}}=d^{2}_{\mathcal{F}}=0$, $\omega\wedge\omega=0$, and $d_{\mathcal{F}}(\omega\wedge\alpha)=-\omega\wedge d_{\mathcal{F}}\alpha$ for any $k$-form $\alpha$, it follows that $\left(d^{\omega}_{\mathcal{F}}\right)^{2}=0$. The differential cochain complex $\left(\Omega^{0,*}\left(M,\mathcal{F}\right),d^{\omega}_{\mathcal{F}}\right)$ is called the leafwise Morse-Novikov complex of the foliated manifold $\left(M,\mathcal{F}\right)$. Let $d_{\mathcal{F}}^{\omega,k}$ be the restriction of $d^{\omega}_{\mathcal{F}}$ to $\Omega^{0,k}\left(M,\mathcal{F}\right)$. The cohomology groups $$H^{k}_{\omega}\left(M,d_{\mathcal{F}}\right)=\frac{\ker\left(d_{\mathcal{F}}^{\omega,k}\right)}{\mathrm{im}\left(d_{\mathcal{F}}^{\omega,k-1}\right)}$$ of this cochain complex are called the leafwise Morse-Novikov cohomology groups of $\left(M,\mathcal{F}\right)$. For the purpose of obtaining Hodge decomposition, we need to consider the reduced leafwise Morse-Novikov cohomology $$\bar{H}^{k}_{\omega}\left(M,d_{\mathcal{F}}\right)=\frac{\ker\left(d_{\mathcal{F}}^{\omega,k}\right)}{\overline{\mathrm{im}\left(d_{\mathcal{F}}^{\omega,k-1}\right)}}.$$
Here the closure $\overline{\mathrm{im} \left(d_{\mathcal{F}}^{\omega,k-1}\right)}$ is taken with respect to the Frech{\'e}t topology on $\Omega^{0,k}\left(M,\mathcal{F}\right)$.
The cup product induced from exterior product of forms makes $\bar{H}^{*}_{\omega}\left(M,d_{\mathcal{F}}\right)$ into a graded commutative algebra over $C^{\infty}(M)$.

\begin{proposition}
\label{ta9}
If $\omega$ and $\theta=\omega+d_{\mathcal{F}}g$ are cohomologous in $H^{1}\left(M,d_{\mathcal{F}}\right)$, then for each $k$, the leafwise Morse-Novikov cohomology groups $H_{\omega}^{k}\left(M,d_{\mathcal{F}}\right)$ and $H_{\theta}^{k}\left(M,d_{\mathcal{F}}\right)$ are isomorphic. That is, the map  $\Phi:H_{\omega}^{k}\left(M,d_{\mathcal{F}}\right)\rightarrow H_{\theta}^{k}\left(M,d_{\mathcal{F}}\right)$ given by $\Phi\left([\alpha]\right)=[e^{-g}\alpha]$ is an isomorphism.
\end{proposition}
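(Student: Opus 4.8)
The plan is to exhibit multiplication by $e^{-g}$ as an isomorphism of cochain complexes from $\left(\Omega^{0,*}(M,\mathcal{F}),d^{\omega}_{\mathcal{F}}\right)$ to $\left(\Omega^{0,*}(M,\mathcal{F}),d^{\theta}_{\mathcal{F}}\right)$, from which the statement follows formally. First I would note that since $\theta-\omega=d_{\mathcal{F}}g$ is leafwise exact, $g$ is a globally defined smooth function on $M$, so $e^{-g}$ and its reciprocal $e^{g}$ are smooth positive functions. Multiplication $m_{-g}\colon\alpha\mapsto e^{-g}\alpha$ then defines a continuous linear automorphism of each Fr\'echet space $\Omega^{0,k}(M,\mathcal{F})$, with continuous inverse $m_{g}\colon\beta\mapsto e^{g}\beta$.

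The key step is to verify the intertwining identity $m_{-g}\circ d^{\omega}_{\mathcal{F}}=d^{\theta}_{\mathcal{F}}\circ m_{-g}$. This is a direct computation using that $d_{\mathcal{F}}$ is a leafwise derivation together with the leafwise chain rule $d_{\mathcal{F}}(e^{-g})=-e^{-g}\,d_{\mathcal{F}}g$. Expanding $d^{\theta}_{\mathcal{F}}(e^{-g}\alpha)=d_{\mathcal{F}}(e^{-g})\wedge\alpha+e^{-g}\,d_{\mathcal{F}}\alpha+(\omega+d_{\mathcal{F}}g)\wedge e^{-g}\alpha$, the two terms involving $d_{\mathcal{F}}g\wedge\alpha$ cancel, leaving $e^{-g}\left(d_{\mathcal{F}}\alpha+\omega\wedge\alpha\right)=m_{-g}(d^{\omega}_{\mathcal{F}}\alpha)$. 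I expect this cancellation to be the crux of the argument, and essentially the only nontrivial content; there is no analytic obstacle here since $g$ is smooth and globally defined.

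Once the intertwining is established, $m_{-g}$ carries $\ker d^{\omega,k}_{\mathcal{F}}$ isomorphically onto $\ker d^{\theta,k}_{\mathcal{F}}$ and $\mathrm{im}\,d^{\omega,k-1}_{\mathcal{F}}$ onto $\mathrm{im}\,d^{\theta,k-1}_{\mathcal{F}}$; consequently $\Phi([\alpha])=[e^{-g}\alpha]$ is well defined on cohomology and independent of the chosen representative. The inverse is induced by $m_{g}$, which is the analogous intertwiner for the pair $(\theta,\omega)$ because $\omega=\theta+d_{\mathcal{F}}(-g)$, so $\Phi$ is an isomorphism. I would finally remark that, since $m_{-g}$ is continuous with continuous inverse, it preserves Fr\'echet closures of the images, so the same argument yields an isomorphism of the reduced groups $\bar{H}^{k}_{\omega}(M,d_{\mathcal{F}})\cong\bar{H}^{k}_{\theta}(M,d_{\mathcal{F}})$, which is the form needed for the later Hodge-theoretic results.
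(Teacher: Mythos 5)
Your proposal is correct and follows essentially the same route as the paper: both rest on the intertwining identity $(d_{\mathcal{F}}+\theta\wedge)(e^{-g}\alpha)=e^{-g}(d_{\mathcal{F}}+\omega\wedge)\alpha$, with inverse given by multiplication by $e^{g}$. You merely package the argument more systematically as an isomorphism of cochain complexes (and add the useful observation that continuity of $m_{\pm g}$ carries the statement over to the reduced groups), but the mathematical content is the same.
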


\begin{proof}
If $\omega$ and $\theta$ are cohomologous, then there exists $g\in \Omega^{0}(M)$, such that $\theta-\omega=d_{\mathcal{F}}g$. Define the mapping $\phi : H_{\omega}^{k}(M,d_{\mathcal{F}})\rightarrow H_{\theta}^{k}(M,d_{\mathcal{F}})$ by $\phi(\left[\alpha\right])=\left[e^{-g}\alpha\right]$. One can check that $\phi$ is well-defined and is a group homomorphism, since
\begin{eqnarray*}
\left(d_{\mathcal{F}}+\theta\wedge\right)\left(e^{-g}\alpha\right)&=&\left(d_{\mathcal{F}}+\omega\wedge+d_{\mathcal{F}}g\wedge\right)\left(e^{-g}\alpha\right),\\
&=&e^{-g}\left(d_{\mathcal{F}}+\omega\wedge\right)\left(\alpha\right),
\end{eqnarray*}
for all $\alpha\in\Omega^{k}\left(M,d_{\mathcal{F}}\right)$.

Suppose $\alpha,\beta\in\Omega^{k}\left(M,d_{\mathcal{F}}\right)$ are cohomologous, then there exists $\nu\in\Omega^{k-1}\left(M,d_{\mathcal{F}}\right)$ such that $\alpha-\beta=\left(d_{\mathcal{F}}+\omega\wedge\right)\nu$. We have 
\begin{eqnarray*}
\phi\left([\alpha-\beta]\right)&=&[e^{-g}\left(d_{\mathcal{F}}+\omega\wedge\right)\nu],\\
\Rightarrow\left[\left(d_{\mathcal{F}}+\theta\wedge\right)\left(e^{-g}\nu\right)\right]&=&\left[0\right].
\end{eqnarray*}

Similarly if, $\phi(\left[\alpha\right])=0$, Then $\left[\alpha\right]=0 \in H_{\omega}^{k}(M,d_{\mathcal{F}})$, and $\phi$ is injective.\newline
 If $\left[\alpha\right]\in H_{\theta}^{k}(M,d_{\mathcal{F}})$ then we find similarly that $\left[e^{g}\alpha\right]\in H_{\omega}^{k}(M,d_{\mathcal{F}})$, so that $\phi$ is surjective.
\end{proof}

\begin{corollary}
\label{ta10}
If $\omega$ is a $d_{\mathcal{F}}$ exact $1$-form, then for each $k$ the leafwise Morse-Novikov cohomology group $H_{\omega}^{k}\left(M,d_{\mathcal{F}}\right)$ and the leafwise de~Rham cohomology group $H^{k}\left(M,d_{\mathcal{F}}\right)$ are isomorphic. $$H_{\omega}^{k}\left(M,d_{\mathcal{F}}\right)\cong H^{k}\left(M,d_{\mathcal{F}}\right).$$ 
\end{corollary}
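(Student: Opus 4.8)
The plan is to recognize this as an immediate specialization of Proposition \ref{ta9}. First I would observe that the leafwise de~Rham complex is precisely the leafwise Morse-Novikov complex associated to the zero form: since $d_{\mathcal{F}} = d_{\mathcal{F}} + 0\wedge$, the twisted differential $d_{\mathcal{F}}^{\omega}$ reduces to $d_{\mathcal{F}}$ when $\omega = 0$, and hence $H^{k}(M, d_{\mathcal{F}}) = H^{k}_{0}(M, d_{\mathcal{F}})$ for every $k$. One should note here that the zero form is a legitimate leafwise closed $1$-form, so this identification is literal, not merely formal.

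Next, I would use the hypothesis that $\omega$ is $d_{\mathcal{F}}$-exact to fix a function $g \in \Omega^{0}(M)$ with $\omega = d_{\mathcal{F}} g$. This exhibits $\omega$ and the zero form as cohomologous in $H^{1}(M, d_{\mathcal{F}})$, since their difference is exactly $d_{\mathcal{F}} g$. The content of the corollary is then nothing more than the statement of Proposition \ref{ta9} applied to this pair of forms.

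Then I would invoke Proposition \ref{ta9} directly, taking the two cohomologous forms to be $0$ and $\theta = 0 + d_{\mathcal{F}} g = \omega$. The proposition produces the isomorphism
\[
\Phi : H^{k}_{0}(M, d_{\mathcal{F}}) \longrightarrow H^{k}_{\omega}(M, d_{\mathcal{F}}), \qquad \Phi([\alpha]) = [e^{-g}\alpha],
\]
with inverse $[\beta] \mapsto [e^{g}\beta]$. Combining this with the identification $H^{k}_{0}(M, d_{\mathcal{F}}) = H^{k}(M, d_{\mathcal{F}})$ from the first step yields the desired isomorphism $H^{k}_{\omega}(M, d_{\mathcal{F}}) \cong H^{k}(M, d_{\mathcal{F}})$.

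Since the entire argument is inherited from Proposition \ref{ta9}, there is no substantive obstacle to overcome. The only point requiring care is the bookkeeping of which form plays the role of $\omega$ and which plays the role of $\theta$ in the cited statement, together with the observation that the zero form is permissible so that $H^{k}_{0}$ is genuinely the leafwise de~Rham group.
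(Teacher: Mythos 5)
Your argument is correct and is exactly the route the paper intends: the corollary is stated without proof because it is the specialization of Proposition \ref{ta9} to the cohomologous pair $0$ and $\omega = d_{\mathcal{F}}g$, with $H^{k}_{0}(M,d_{\mathcal{F}}) = H^{k}(M,d_{\mathcal{F}})$. Your explicit bookkeeping of which form plays which role, and the remark that the zero form is a legitimate leafwise closed $1$-form, only makes the implicit argument more careful.
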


\begin{corollary}
\label{ta11}
If the first leafwise de~Rham cohomology group $H^{1}\left(M,d_{\mathcal{F}}\right)$ equals $0$, then for every $d_{\mathcal{F}}$ closed 1-form $\omega$ and for each $k$ the leafwise Morse-Novikov cohomology groups satisfy $H_{\omega}^{k}\left(M,d_{\mathcal{F}}\right)=H^{k}\left(M,d_{\mathcal{F}}\right)$.
\end{corollary}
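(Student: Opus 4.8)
The plan is to reduce this statement directly to Corollary \ref{ta10}, whose conclusion already identifies leafwise Morse-Novikov cohomology with leafwise de~Rham cohomology whenever the twisting form is $d_{\mathcal{F}}$-exact. The only work is to verify that, under the stated hypothesis, every $d_{\mathcal{F}}$-closed $1$-form is automatically exact; after that the corollary does all the remaining work.

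First I would unwind the hypothesis $H^{1}\left(M,d_{\mathcal{F}}\right)=0$. By the very definition of leafwise de~Rham cohomology, $H^{1}\left(M,d_{\mathcal{F}}\right)=\ker d_{\mathcal{F}}^{1}/\mathrm{im}\,d_{\mathcal{F}}^{0}$, so its vanishing is precisely the assertion that $\ker d_{\mathcal{F}}^{1}=\mathrm{im}\,d_{\mathcal{F}}^{0}$. In words, every $d_{\mathcal{F}}$-closed leafwise $1$-form lies in the image of $d_{\mathcal{F}}^{0}$, hence is $d_{\mathcal{F}}$-exact.

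Now let $\omega$ be an arbitrary $d_{\mathcal{F}}$-closed $1$-form. By the previous step there exists $g\in\Omega^{0}(M)$ with $\omega=d_{\mathcal{F}}g$, so $\omega$ is $d_{\mathcal{F}}$-exact. Applying Corollary \ref{ta10} to this $\omega$ then yields $H_{\omega}^{k}\left(M,d_{\mathcal{F}}\right)\cong H^{k}\left(M,d_{\mathcal{F}}\right)$ for every $k$, which is the desired conclusion. (Concretely, the isomorphism is the one furnished by Proposition \ref{ta9} with $\theta=0$, namely $[\alpha]\mapsto[e^{-g}\alpha]$, since $0=\omega+d_{\mathcal{F}}(-g)$ shows $\omega$ and $0$ are cohomologous.)

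I do not anticipate any genuine obstacle: the statement is an immediate specialization of Corollary \ref{ta10}, with the single added observation that $H^{1}=0$ forces closed to mean exact. The only point worth flagging is bookkeeping rather than mathematics, namely that the equality sign in the statement should be read as the canonical isomorphism supplied by Corollary \ref{ta10}, not as a literal coincidence of the two quotient spaces.
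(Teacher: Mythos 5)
Your proposal is correct and is essentially the argument the paper intends: the corollary is stated without proof precisely because it follows immediately from Corollary \ref{ta10} once one notes that $H^{1}\left(M,d_{\mathcal{F}}\right)=0$ forces every $d_{\mathcal{F}}$-closed leafwise $1$-form to be $d_{\mathcal{F}}$-exact. Your closing remark that the equality should be read as the canonical isomorphism $[\alpha]\mapsto[e^{-g}\alpha]$ from Proposition \ref{ta9} is a reasonable clarification consistent with the paper's usage.
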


\begin{lemma}
\label{ta12}
For any smooth foliation $\left(M,\mathcal{F}\right)$ the leafwise Morse-Novikov cohomology $H_{\omega}^{0}\left(M,d_{\mathcal{F}}\right)=\{0\}$ if and only if $\omega$ is not $d_{\mathcal{F}}$ exact. 
\end{lemma}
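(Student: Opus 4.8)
The plan is to identify $H^{0}_{\omega}\left(M,d_{\mathcal{F}}\right)$ explicitly and reduce the statement to a nonvanishing question for a single function. Since there are no leafwise forms of negative degree, the image $\mathrm{im}\left(d_{\mathcal{F}}^{\omega,-1}\right)$ is zero, so $H^{0}_{\omega}\left(M,d_{\mathcal{F}}\right)=\ker\left(d_{\mathcal{F}}^{\omega,0}\right)$. Because $\Omega^{0,0}\left(M,\mathcal{F}\right)=C^{\infty}(M)$, an element of this kernel is a function $f\in C^{\infty}(M)$ satisfying $d_{\mathcal{F}}f+f\omega=0$, i.e. $d_{\mathcal{F}}f=-f\omega$. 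Thus the assertion is equivalent to the dichotomy: a nonzero solution $f$ of $d_{\mathcal{F}}f=-f\omega$ exists if and only if $\omega$ is $d_{\mathcal{F}}$-exact. I would prove the two implications separately.

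The easy implication is that exactness produces a nonzero kernel element. If $\omega=d_{\mathcal{F}}g$ for some $g\in C^{\infty}(M)$, then setting $f=e^{-g}$ and computing $d_{\mathcal{F}}^{\omega,0}\left(e^{-g}\right)=d_{\mathcal{F}}\left(e^{-g}\right)+e^{-g}\omega=e^{-g}\left(\omega-d_{\mathcal{F}}g\right)=0$ shows $f\in\ker\left(d_{\mathcal{F}}^{\omega,0}\right)$; since $e^{-g}$ is nowhere zero, $H^{0}_{\omega}\left(M,d_{\mathcal{F}}\right)\neq\{0\}$. Contrapositively, $H^{0}_{\omega}\left(M,d_{\mathcal{F}}\right)=\{0\}$ forces $\omega$ to be non-exact, which is one half of the lemma.

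For the converse I would start from a nonzero $f$ with $d_{\mathcal{F}}f=-f\omega$ and recover $g$ by the formula $g=-\log\lvert f\rvert$, which requires that $f$ be nowhere vanishing. Granting this, $\log\lvert f\rvert\in C^{\infty}(M)$ and $d_{\mathcal{F}}\log\lvert f\rvert=\tfrac{d_{\mathcal{F}}f}{f}=-\omega$, so $\omega=d_{\mathcal{F}}\left(-\log\lvert f\rvert\right)$ is $d_{\mathcal{F}}$-exact, completing the proof. The first ingredient toward nonvanishing is a leafwise ordinary differential equation argument: restricting $d_{\mathcal{F}}f=-f\omega$ to a path $\gamma$ inside a single leaf, the function $h(t)=f(\gamma(t))$ satisfies the linear scalar equation $h'(t)=-\omega\left(\gamma'(t)\right)h(t)$, so by uniqueness of solutions $h$ vanishes at one point precisely when it vanishes identically. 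Hence the zero set $Z=f^{-1}(0)$ is a union of leaves, and within any leaf $f$ is either identically zero or nowhere zero.

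The main obstacle I anticipate is passing from this leafwise statement to global nonvanishing, namely showing that the saturated closed set $Z$ must be empty once $f\not\equiv0$. The difficulty is structural: $d_{\mathcal{F}}$ constrains $f$ only along the leaves, so the ODE argument gives no transverse control, and I would need to combine connectedness of $M$ with the continuity of $f$ across leaves to exclude the possibility that $Z$ is a nonempty proper saturated subset. I expect this transverse step to be the delicate point of the argument and the place where the hypotheses on $\left(M,\mathcal{F}\right)$ are genuinely used; the remaining reconstruction of $g$ from a nowhere-vanishing $f$ is then routine.
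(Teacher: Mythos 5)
Your reverse implication (if $\omega=d_{\mathcal{F}}g$ then $e^{-g}$ is a nowhere-zero element of $\ker d_{\mathcal{F}}^{\omega,0}$) is correct and is the same computation as the paper's. The forward implication is where the problem lies, and you have named the gap yourself: everything reduces to showing that a nonzero solution $f$ of $d_{\mathcal{F}}f=-f\omega$ is nowhere vanishing, and your leafwise ODE argument only shows that the zero set $Z=f^{-1}(0)$ is closed and saturated. You then defer the ``transverse step'' of proving $Z=\emptyset$. That step is the entire content of this direction, and it cannot be carried out at the stated level of generality, because a non-minimal foliation admits nonempty proper closed saturated sets. Concretely: on $M=S^{1}\times S^{1}$ foliated by the circles $S^{1}\times\{y\}$, put $\omega=a(y)\,dx$ with $a$ a bump function of $y$ that is not identically zero. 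Then $\omega$ is leafwise closed (leaves are one-dimensional) but not $d_{\mathcal{F}}$-exact, since a primitive $g$ would satisfy $\int_{S^{1}\times\{y\}}\partial_{x}g\,dx=2\pi a(y)\neq 0$ on some leaf. Yet any $f(x,y)=C(y)$ with $C\not\equiv 0$ supported in the interior of $\{a=0\}$ satisfies $d_{\mathcal{F}}f+f\omega=a(y)C(y)\,dx=0$, so $H^{0}_{\omega}\left(M,d_{\mathcal{F}}\right)\neq\{0\}$ while $\omega$ is not exact. This is a Riemannian (product) foliation, so the lemma fails as stated.

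So your proposal is incomplete, but for the right reason: the obstruction you anticipated is genuine, not a technicality you failed to dispatch. For comparison, the paper's own proof of this direction writes $d_{\mathcal{F}}\left(\log(1/f)\right)=\omega$ outright, silently dividing by $f$ and taking its logarithm with no justification that $f$ is positive, let alone nonvanishing --- exactly the point you isolated. Your argument does become a complete proof under an additional hypothesis forcing the closed saturated set $Z$ to be empty or all of $M$, e.g.\ that every leaf of $\mathcal{F}$ is dense; then a nonzero $f$ is nowhere vanishing, $\log\lvert f\rvert$ is smooth, and $\omega=d_{\mathcal{F}}\left(-\log\lvert f\rvert\right)$ is exact. (The one-leaf foliation of a connected manifold recovers the classical Morse--Novikov statement.) Without such a hypothesis only the implication ``$\omega$ exact $\Rightarrow H^{0}_{\omega}\neq\{0\}$'' survives.
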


\begin{proof}
 Suppose first that $H_{\omega}^{0}\left(M,d_{\mathcal{F}}\right)\ne\{0\}$ for a $d_{\mathcal{F}}$ closed one form $\omega$ on $\left(M,\mathcal{F}\right)$, then there is a nonzero function $f\in C^{\infty}\left(M\right)$, such that 
\begin{eqnarray*}
\left(d_{\mathcal{F}}+\omega\right)f &=& 0\\
d_{\mathcal{F}}f+f\omega &=& 0\\
d_{\mathcal{F}}\left(\log(\frac{1}{f})\right) &=& \omega,
\end{eqnarray*}
which implies $\omega$ is $d_{\mathcal{F}}$ exact.
Conversely, suppose that $\omega$ is $d_{\mathcal{F}}$ exact. There exists a function $g\in C^{\infty}\left(M\right)$ such that $d_{\mathcal{F}}g=\omega$.  Then
\begin{eqnarray*}
\left(d_{\mathcal{F}}+\omega\right)\left(e^{-g}\right) &=&-e^{-g} d_{\mathcal{F}}g+e^{-g}d_{\mathcal{F}}g=0,
\end{eqnarray*}
which shows $H_{\omega}^{0}\left(M,d_{\mathcal{F}}\right)\ne\{0\}$.
\end{proof}

\begin{proposition}
\label{ta13}
(Homotopy axiom for the leafwise Morse-Novikov cohomology). Let $f:\left(M,\mathcal{F}_{0}\right)\rightarrow \left(N,\mathcal{F}_{1}\right)$ and $g:\left(M,\mathcal{F}_{0}\right)\rightarrow \left(N,\mathcal{F}_{1}\right)$ be foliated homotopic maps, and let $\omega$ be a leafwise closed $1-\text{form}$ on $\left(N,\mathcal{F}_{1}\right)$. Then there exists a positive function $h:\left(M,\mathcal{F}_{0}\right)\rightarrow\mathbb{R}$ such that for all $k$ $$f^{*}=h g^{*}:H^{k}_{\omega}\left(N,d_{\mathcal{F}_{1}}\right)\rightarrow H^{k}_{f^{*}\omega}\left(M,d_{\mathcal{F}_{0}}\right).$$
\end{proposition}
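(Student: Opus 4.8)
The plan is to lift everything to the homotopy and combine a twisted chain homotopy on $M\times[0,1]$ with the gauge isomorphism of Proposition \ref{ta9}. Let $F:M\times[0,1]\to N$ realize the foliated homotopy, with $F\circ i_0=f$ and $F\circ i_1=g$ for the end inclusions $i_0,i_1:M\to M\times[0,1]$. Equip $M\times[0,1]$ with the product foliation $\mathcal{F}_0\times[0,1]$, whose leaves are $L\times[0,1]$; the leaf-preserving hypothesis on the homotopy says precisely that $F$ is a foliated map, so $F^*\omega$ is a leafwise closed $1$-form on $(M\times[0,1],\mathcal{F}_0\times[0,1])$ and $F^*$ is a cochain map from $(\Omega^{0,*}(N,\mathcal{F}_1),d^\omega_{\mathcal{F}_1})$ to $(\Omega^{0,*}(M\times[0,1]),d_{\mathcal{F}}+F^*\omega\wedge)$. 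Consequently $f^*=i_0^*F^*$ and $g^*=i_1^*F^*$.

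First I would record the structure of $F^*\omega$. Decomposing along the interval as $F^*\omega=\omega_t+a\,dt$, where $\omega_t$ is a $t$-dependent leafwise $1$-form on $M$ and $a=\iota_{\partial_t}F^*\omega$ is a function, leafwise closedness $d_{\mathcal{F}}F^*\omega=0$ yields $\partial_t\omega_t=d_{\mathcal{F}_0}a$. Setting $\Psi(x,t)=\int_0^t a(x,s)\,ds$ and $\phi=\Psi|_{t=1}$, integration in $t$ gives $\omega_1-\omega_0=d_{\mathcal{F}_0}\phi$, that is, $g^*\omega-f^*\omega=d_{\mathcal{F}_0}\phi$; thus $f^*\omega$ and $g^*\omega$ are leafwise cohomologous and $h:=e^{\phi}>0$ is the candidate function, $\phi$ being the integral of $\omega$ along the homotopy path $t\mapsto F_t(x)$.

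Next I would perform a gauge transformation exactly as in Proposition \ref{ta9}. With $\pi:M\times[0,1]\to M$ the projection and $\tilde D:=d_{\mathcal{F}}+\pi^*f^*\omega\wedge$, the identity $F^*\omega-d_{\mathcal{F}}\Psi=\pi^*f^*\omega$ (which follows from $\partial_t\omega_t=d_{\mathcal{F}_0}a$ together with $\Psi|_{t=0}=0$) shows that multiplication by $e^{\Psi}$ is a cochain map from $(\Omega^{0,*}(M\times[0,1]),d_{\mathcal{F}}+F^*\omega\wedge)$ to $(\Omega^{0,*}(M\times[0,1]),\tilde D)$; hence $e^{\Psi}F^*$ is a cochain map into the $\tilde D$-complex. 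The advantage of $\tilde D$ is that it is twisted by the pulled-back, $t$-independent form $\pi^*f^*\omega$, for which the fibre-integration operator $K\beta=\int_0^1\iota_{\partial_t}\beta\,dt$ is still a chain homotopy: since $\iota_{\partial_t}(\pi^*f^*\omega)=0$ and $\pi^*f^*\omega$ does not depend on $t$, the twisting terms cancel, $\pi^*f^*\omega\wedge K\beta+K(\pi^*f^*\omega\wedge\beta)=0$, leaving $\tilde D K+K\tilde D=i_1^*-i_0^*$ exactly as in the untwisted case underlying Proposition \ref{ta4}. Therefore $i_0^*$ and $i_1^*$ agree on the cohomology of the $\tilde D$-complex, taking values in $H^k_{f^*\omega}(M,d_{\mathcal{F}_0})$ because $i_t^*\pi^*f^*\omega=f^*\omega$.

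Finally I would assemble the pieces. Applying $i_0^*=i_1^*$ on cohomology to the $\tilde D$-closed form $e^{\Psi}F^*\alpha$, for $\alpha$ a $d^\omega_{\mathcal{F}_1}$-closed $k$-form, and using $\Psi|_{t=0}=0$ and $\Psi|_{t=1}=\phi$, gives $[f^*\alpha]=[e^{\phi}g^*\alpha]$ in $H^k_{f^*\omega}(M,d_{\mathcal{F}_0})$; since multiplication by $e^{\phi}$ is precisely the isomorphism $H^k_{g^*\omega}(M,d_{\mathcal{F}_0})\to H^k_{f^*\omega}(M,d_{\mathcal{F}_0})$ furnished by Proposition \ref{ta9}, this is exactly $f^*=hg^*$ with $h=e^{\phi}$. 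Note that the argument is purely homotopy-theoretic and so holds on the unreduced groups without invoking the Riemannian hypothesis. I expect the main obstacle to be the bookkeeping in the gauge step: one must check that $e^{\Psi}$ converts the genuinely $t$-dependent twisting form $F^*\omega$ (with its nonzero $dt$-component $a\,dt$) into the clean pullback $\pi^*f^*\omega$, for only then does the cancellation making $K$ a twisted chain homotopy go through; the leaf-preserving nature of the homotopy is essential throughout, as it is what makes $F^*\omega$ leafwise closed and the path integral $\phi$ well-defined.
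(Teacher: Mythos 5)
Your proof is correct, and it is in fact more complete than the one in the paper. The paper's argument has two steps: it first invokes Proposition \ref{ta4} to conclude that $f^{*}\omega$ and $g^{*}\omega$ are leafwise cohomologous, writes $g^{*}\omega-f^{*}\omega=d_{\mathcal{F}}\nu$ and sets $h=e^{\nu}$; it then asserts that $\left[hg^{*}\alpha\right]=\left[f^{*}\alpha\right]$ follows ``from the proof of Proposition \ref{ta9}.'' But Proposition \ref{ta9} only supplies the gauge isomorphism $H^{k}_{g^{*}\omega}\left(M,d_{\mathcal{F}_{0}}\right)\cong H^{k}_{f^{*}\omega}\left(M,d_{\mathcal{F}_{0}}\right)$, i.e.\ it shows that $hg^{*}$ lands in the correct group; it does not by itself show that the two cochain maps $f^{*}$ and $hg^{*}$ induce the same map on cohomology. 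That equality requires a chain homotopy compatible with the twisted differential, and this is exactly what you construct: the gauge factor $e^{\Psi}$ on $M\times[0,1]$ converts the $t$-dependent twisting form $F^{*}\omega$ into the constant form $\pi^{*}f^{*}\omega$, after which the standard fibre integration $K$ is a chain homotopy for $\tilde{D}$ because the twisting terms cancel ($\iota_{\partial_{t}}\pi^{*}f^{*}\omega=0$). Your explicit formula $\phi=\int_{0}^{1}\iota_{\partial_{t}}F^{*}\omega\,dt$ also recovers the function $\nu$ of the paper (the integral of $\omega$ along the homotopy paths), so both proofs identify the same $h$. In short, you take the same overall route---reduce to the cohomologousness of $f^{*}\omega$ and $g^{*}\omega$ and then apply the gauge transformation of Proposition \ref{ta9}---but you supply the twisted homotopy operator that the paper's proof leaves implicit; that operator carries the real content of the homotopy axiom, and you correctly note that the argument works on the unreduced groups without any Riemannian hypothesis.
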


\begin{proof}
Since $f$ and $g$ are foliated homotopic maps, by the homotopy axiom of leafwise de~Rham cohomology (Proposition $\ref{ta4}$), they induce the same map in leafwise de~Rham cohomology. Therefore, for any leafwise closed $1-$form $\omega\in\Omega^{1}\left(N,\mathcal{F}_{1}\right)$,  the pullback forms $f^{*}\omega\text{ , }g^{*}\omega\in H^{1}_{\omega}\left(M,d_{\mathcal{F}_{0}}\right)$ are cohomologous. There exists a function $\nu:\left(M,\mathcal{F}_{0}\right)\rightarrow\mathbb{R}$ such that $g^{*}\omega-f^{*}\omega=d_{\mathcal{F}}\nu$. We define $h=e^{\nu}$. Then from the proof of Proposition $\ref{ta9}$, for any $d^{\omega}_{\mathcal{F}}$ closed form $\alpha$ on $\left(N,\mathcal{F}_{1}\right)$, $\left[hg^{*}\alpha\right]=\left[f^{*}\alpha\right]\in H_{f^{*}\omega}^{k}\left(M,d_{\mathcal{F}_{0}}\right)$.
\end{proof}

\begin{corollary}
\label{ta14}
If $f:\left(M,\mathcal{F}_{0}\right)\rightarrow \left(N,\mathcal{F}_{1}\right)$ is a foliated homotopy equivalence and $\omega$ is a leafwise closed $1-\text{form}$, then the leafwise Morse-Novikov cohomology groups $H_{\omega}^{*}\left(M,d_{\mathcal{F}_{0}}\right)$ and $H_{f^{*}\omega}^{*}\left(N,d_{\mathcal{F}_{1}}\right)$ are isomorphic; i.e. $$H_{\omega}^{k}\left(M,d_{\mathcal{F}_{0}}\right)\cong H_{f^{*}\omega}^{k}\left(N,d_{\mathcal{F}_{1}}\right), \text{ for all } k.$$
\end{corollary}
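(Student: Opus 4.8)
The plan is to deduce the isomorphism from the homotopy axiom (Proposition \ref{ta13}) via the standard two-sided-inverse argument, while keeping track of how the twisting form changes under pullback. Let $g:\left(N,\mathcal{F}_{1}\right)\to\left(M,\mathcal{F}_{0}\right)$ be a foliated homotopy inverse of $f$, so that $f\circ g$ is leafwise homotopic to $l_{\mathcal{F}_{1}}$ and $g\circ f$ is leafwise homotopic to $l_{\mathcal{F}_{0}}$. First I would record that a foliated map carries a twisted complex to a twisted complex: since $f^{*}d_{\mathcal{F}_{1}}=d_{\mathcal{F}_{0}}f^{*}$ and $f^{*}\left(\omega\wedge\alpha\right)=\left(f^{*}\omega\right)\wedge f^{*}\alpha$, the pullback $f^{*}$ is a cochain map $\left(\Omega^{0,*}\left(N\right),d^{\omega}_{\mathcal{F}_{1}}\right)\to\left(\Omega^{0,*}\left(M\right),d^{f^{*}\omega}_{\mathcal{F}_{0}}\right)$ and hence induces $f^{*}:H^{k}_{\omega}\left(N,d_{\mathcal{F}_{1}}\right)\to H^{k}_{f^{*}\omega}\left(M,d_{\mathcal{F}_{0}}\right)$; the same holds for $g^{*}$ with the twisting form correspondingly pulled back.

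Next I would form the two composites using contravariant functoriality of pullback. On cohomology $g^{*}\circ f^{*}=\left(f\circ g\right)^{*}:H^{k}_{\omega}\left(N,d_{\mathcal{F}_{1}}\right)\to H^{k}_{\left(f\circ g\right)^{*}\omega}\left(N,d_{\mathcal{F}_{1}}\right)$, and $f^{*}\circ g^{*}=\left(g\circ f\right)^{*}:H^{k}_{f^{*}\omega}\left(M,d_{\mathcal{F}_{0}}\right)\to H^{k}_{\left(g\circ f\right)^{*}f^{*}\omega}\left(M,d_{\mathcal{F}_{0}}\right)$. Applying Proposition \ref{ta13} to the leafwise-homotopic pair $f\circ g,\ l_{\mathcal{F}_{1}}$ with twisting form $\omega$ shows that $\left(f\circ g\right)^{*}$ equals multiplication by a positive function $h_{1}$; since the forms $\omega$ and $\left(f\circ g\right)^{*}\omega$ are cohomologous, this is precisely the Proposition \ref{ta9} isomorphism, so $g^{*}\circ f^{*}$ is an isomorphism. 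Applying Proposition \ref{ta13} to $g\circ f,\ l_{\mathcal{F}_{0}}$ with twisting form $f^{*}\omega$ shows likewise that $f^{*}\circ g^{*}=\left(g\circ f\right)^{*}$ is an isomorphism.

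Finally I would combine these. Using the Proposition \ref{ta9} isomorphisms to identify each cohomologous-twist group, namely $H^{k}_{\left(f\circ g\right)^{*}\omega}\left(N\right)$ with $H^{k}_{\omega}\left(N\right)$ and $H^{k}_{\left(g\circ f\right)^{*}f^{*}\omega}\left(M\right)$ with $H^{k}_{f^{*}\omega}\left(M\right)$, the composites $g^{*}\circ f^{*}$ and $f^{*}\circ g^{*}$ become isomorphisms of $H^{k}_{\omega}\left(N,d_{\mathcal{F}_{1}}\right)$ and of $H^{k}_{f^{*}\omega}\left(M,d_{\mathcal{F}_{0}}\right)$ respectively. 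That $g^{*}\circ f^{*}$ is an isomorphism forces $f^{*}$ to be injective, and that $f^{*}\circ g^{*}$ is an isomorphism forces $f^{*}$ to be surjective; hence $f^{*}$ is an isomorphism between the $\omega$-twisted and $f^{*}\omega$-twisted leafwise Morse-Novikov cohomology groups of the two foliated manifolds, with inverse supplied by $g^{*}$ composed with the relevant Proposition \ref{ta9} identification.

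The main obstacle I anticipate is bookkeeping rather than conceptual: the twisting form changes under each pullback, so none of the composites is literally an endomorphism of a single fixed group. The care lies in checking that the Proposition \ref{ta9} identifications of the various cohomologous-twist groups are mutually compatible, so that the formal principle ``if $BA$ and $AB$ are isomorphisms then $A$ is an isomorphism'' may be applied legitimately; verifying this compatibility amounts to tracking the positive functions $h_{1},h_{2}$ and the primitives $\nu_{1},\nu_{2}$ produced by the two applications of Proposition \ref{ta13}.
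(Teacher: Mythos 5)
Your proposal is correct and follows essentially the same route as the paper: take a foliated homotopy inverse $g$, apply the homotopy axiom (Proposition \ref{ta13}) to the two composites $f\circ g$ and $g\circ f$ to see that each of $g^{*}\circ f^{*}$ and $f^{*}\circ g^{*}$ is multiplication by a positive function (the Proposition \ref{ta9} identification of cohomologous twists), and conclude that $f^{*}$ is an isomorphism. Your extra care about the twisting form changing under each pullback is a point the paper's proof passes over quickly, but it is the same argument.
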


\begin{proof}
There exists a map $g:\left(N,\mathcal{F}_{1}\right)\rightarrow \left(M,\mathcal{F}_{0}\right)$ such that $f\circ g$ is homotopic to the identity map $\mathbb{I}_{N}$ of $N$ and $g\circ f$ is homotopic to the identity map $\mathbb{I}_{M}$ of $M$ . We have linear maps
 $$H^{*}_{\omega}\left(N,\mathcal{F}_{1}\right)\stackrel{f^{*}}{\rightarrow}H^{*}_{f^{*}\omega}\left(M,\mathcal{F}_{0}\right)\stackrel{g^{*}}{\rightarrow}H^{*}_{g^{*}f^{*}\omega}\left(N,\mathcal{F}_{1}\right).$$
 By the homotopy axiom of leafwise Morse-Novikov cohomology (Proposition \ref{ta13}), there exists a positive function $h:N\rightarrow\mathbb{R}$ such that $g^{*}f^{*}\omega=\omega+d_{\mathcal{F}_{1}}(\log(h))$ then we have
$$\mathbb{I}_{N}=hg^{*}f^{*}=h\left(f\circ g\right)^{*}:H^{*}_{\omega}\left(N,\mathcal{F}_{1}\right)\rightarrow H^{*}_{\omega}\left(N,\mathcal{F}_{1}\right).$$ 
And similarly, for some positive function $\bar{h}:M\rightarrow\mathbb{R}$ such that $f^{*}g^{*}\omega=\omega+d_{\mathcal{F}_{0}}(\log(\bar{h}))$ then  we have
$$\mathbb{I}_{M}=\bar{h}f^{*}g^{*}=h\left(g\circ f\right)^{*}:H^{*}_{\omega}\left(M,\mathcal{F}_{0}\right)\rightarrow H^{*}_{\omega}\left(M,\mathcal{F}_{0}\right).$$ 
Since multiplication by a positive function is an isomorphism of Leafwise Morse-Novikov cohomology, $f^{*}$ and $g^{*}$ are isomorphisms.
\end{proof}

\section{Laplacian and Hodge decomposition on leafwise Morse-Novikov cohomology}
In the following, assume $\dim(M)=n$, $\dim(\mathcal{F})=p$, and $\mathrm{codim}(\mathcal{F})=q$. As in (Equation \eqref{one}), we have the bigrading $$\Omega^{u,v}\left(M,\mathcal{F}\right)=\Gamma(M,\Lambda^{v}T\mathcal{F}^{*}\otimes\Lambda^{u}T\mathcal{F}^{\perp *}).$$
We choose a tangential and a transversal orientation for $\mathcal{F}$ on any open subset $\mathcal{U}\subset M$. We obtain the Hodge star operator $*_{\mathcal{F}}$ on $T\mathcal{F}^{*}$ and $*_{\bot}$ on $T\mathcal{F}^{\bot*}$ to $\mathcal{U}$ such that $*_{\bot}(1)\wedge*_{\mathcal{F}}(1)$ is a positive volume form on $\mathcal{U}\subset M$.

\begin{lemma}
\label{ta15}
(Lemma 3.2 in \cite{lopez2001long}) The Hodge star operator on $\wedge\left(T^{*}M\right)=\wedge\left(T\mathcal{F}^{\bot*}\right)\otimes \wedge\left(T\mathcal{F}^{*}\right)$ on $\mathcal{U}$ satisfies
$$*=(-1)^{(q-u)v}*_{\bot}\otimes *_{\mathcal{F}}:\Lambda^{u}T\mathcal{F}^{\bot*}\otimes\Lambda^{v}T\mathcal{F}^{*}\rightarrow \Lambda^{q-u}T\mathcal{F}^{\bot*}\otimes\Lambda^{p-v}T\mathcal{F}^{*}.$$
\end{lemma}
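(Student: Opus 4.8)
The plan is to treat this as a pointwise statement in linear algebra, since the claimed identity is an identity of fibrewise operators on the orthogonally split space $T^{*}_{x}M=T\mathcal{F}^{\bot*}_{x}\oplus T\mathcal{F}^{*}_{x}$ at each $x\in\mathcal{U}$; the passage to a global statement on $\mathcal{U}$ is immediate once the local orientations are chosen so that $*_{\bot}(1)\wedge *_{\mathcal{F}}(1)$ is the positive volume form. First I would fix orthonormal coframes $e^{1},\dots,e^{q}$ spanning $T\mathcal{F}^{\bot*}$ and $f^{1},\dots,f^{p}$ spanning $T\mathcal{F}^{*}$, so that $\mathrm{vol}=*_{\bot}(1)\wedge *_{\mathcal{F}}(1)=e^{1}\wedge\cdots\wedge e^{q}\wedge f^{1}\wedge\cdots\wedge f^{p}$. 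Writing $e^{I}=e^{i_{1}}\wedge\cdots\wedge e^{i_{u}}$ and $f^{J}=f^{j_{1}}\wedge\cdots\wedge f^{j_{v}}$ for increasing multi-indices $I,J$, the monomials $e^{I}\wedge f^{J}$ with $|I|=u$ and $|J|=v$ form an orthonormal basis of $\Lambda^{u}T\mathcal{F}^{\bot*}\otimes\Lambda^{v}T\mathcal{F}^{*}$, so by linearity it suffices to verify the identity on these.

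Next I would check that the operator $T:=(-1)^{(q-u)v}\,*_{\bot}\otimes *_{\mathcal{F}}$ satisfies the same defining relation as the full Hodge star, namely $\beta\wedge T\alpha=\langle\beta,\alpha\rangle\,\mathrm{vol}$ for all $\beta$ of the same bidegree as $\alpha$; since $*$ is the unique operator with this property, this forces $T=*$. Taking $\alpha=e^{I}\wedge f^{J}$ and $\beta=e^{I'}\wedge f^{J'}$, I would compute
$$\beta\wedge T\alpha=(-1)^{(q-u)v}\,(e^{I'}\wedge f^{J'})\wedge(*_{\bot}e^{I})\wedge(*_{\mathcal{F}}f^{J}).$$
The heart of the computation is to commute $f^{J'}$, of degree $v$, past $*_{\bot}e^{I}$, of degree $q-u$; this produces exactly the factor $(-1)^{(q-u)v}$, which cancels the prefactor and leaves
$$\beta\wedge T\alpha=(e^{I'}\wedge *_{\bot}e^{I})\wedge(f^{J'}\wedge *_{\mathcal{F}}f^{J}).$$
Now the fibrewise defining relations for $*_{\bot}$ and $*_{\mathcal{F}}$ separately give $e^{I'}\wedge *_{\bot}e^{I}=\delta_{I'I}\,*_{\bot}(1)$ and $f^{J'}\wedge *_{\mathcal{F}}f^{J}=\delta_{J'J}\,*_{\mathcal{F}}(1)$, so the right-hand side equals $\delta_{I'I}\delta_{J'J}\,\mathrm{vol}=\langle\beta,\alpha\rangle\,\mathrm{vol}$, as required.

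Finally I would conclude that $T$ and $*$ agree on the chosen orthonormal basis and hence, by bilinearity, as operators $\Lambda^{u}T\mathcal{F}^{\bot*}\otimes\Lambda^{v}T\mathcal{F}^{*}\to\Lambda^{q-u}T\mathcal{F}^{\bot*}\otimes\Lambda^{p-v}T\mathcal{F}^{*}$. I do not expect a genuine obstacle here: the entire content is the sign arising from the single transposition of the tangential factor of $\beta$ past the transversal factor of $T\alpha$, and the only thing to handle carefully is the bookkeeping of this sign together with the orientation convention that makes $\mathrm{vol}=*_{\bot}(1)\wedge *_{\mathcal{F}}(1)$ positive. One point worth stating explicitly is the identification $\wedge(T^{*}M)\cong\wedge(T\mathcal{F}^{\bot*})\otimes\wedge(T\mathcal{F}^{*})$ under which $\eta\otimes\xi\mapsto\eta\wedge\xi$, since the factor $(-1)^{(q-u)v}$ records precisely the discrepancy between the graded tensor-product ordering and the wedge ordering.
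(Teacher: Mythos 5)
Your argument is correct. Note that the paper does not actually prove this lemma; it is stated with a citation to Lemma 3.2 of \'Alvarez L\'opez--Kordyukov, so your orthonormal-coframe verification supplies an argument the paper leaves entirely to the literature. The computation is the standard one and the sign bookkeeping checks out: with $\mathrm{vol}=*_{\bot}(1)\wedge *_{\mathcal{F}}(1)$ and $T=(-1)^{(q-u)v}\,*_{\bot}\otimes *_{\mathcal{F}}$, the single transposition of $f^{J'}$ (degree $v$) past $*_{\bot}e^{I}$ (degree $q-u$) cancels the prefactor and reduces everything to the separate defining relations for $*_{\bot}$ and $*_{\mathcal{F}}$. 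One small point you should make explicit: uniqueness of the Hodge star pins down $T\alpha$ only after you test $\beta\wedge T\alpha=\langle\beta,\alpha\rangle\,\mathrm{vol}$ against \emph{all} $\beta$ of total degree $u+v$, not just those of bidegree $(u,v)$. This is harmless --- for $\beta$ of bidegree $(u',v')$ with $u'+v'=u+v$ and $u'\neq u$, the product $\beta\wedge T\alpha$ has transversal degree $u'+q-u\neq q$ or tangential degree $v'+p-v\neq p$ and hence vanishes, matching $\langle\beta,\alpha\rangle=0$ --- but it is the step that justifies invoking uniqueness. A quick sanity check with $p=q=1$, $\alpha=f^{1}$, confirms the sign: $*f^{1}=-e^{1}=(-1)^{(1-0)\cdot 1}\,*_{\bot}(1)\wedge *_{\mathcal{F}}(f^{1})$.
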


\begin{lemma}
\label{ta16}
$*^{2}=(-1)^{(u+v)(p+q+1)}$ on $\Lambda^{u}T\mathcal{F}^{\bot*}\otimes\Lambda^{v}T\mathcal{F}^{*}$.
\end{lemma}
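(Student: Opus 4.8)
The plan is to compute $*^{2}$ directly from the factorization supplied by Lemma \ref{ta15}, applying it twice while carefully tracking the bidegree shift, and then to reduce the resulting sign modulo $2$. Concretely, Lemma \ref{ta15} says that on $\Lambda^{u}T\mathcal{F}^{\bot*}\otimes\Lambda^{v}T\mathcal{F}^{*}$ we have $*=(-1)^{(q-u)v}\,*_{\bot}\otimes *_{\mathcal{F}}$, and that $*$ raises the bidegree $(u,v)$ to $(q-u,p-v)$.

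First I would apply $*$ once to an element of bidegree $(u,v)$, obtaining the sign $(-1)^{(q-u)v}$ and landing in bidegree $(q-u,p-v)$. Then I would apply $*$ a second time, now on bidegree $(q-u,p-v)$, so that in the exponent of Lemma \ref{ta15} one must substitute $u\mapsto q-u$ and $v\mapsto p-v$; this second application contributes the sign $(-1)^{(q-(q-u))(p-v)}=(-1)^{u(p-v)}$ together with the operator factors $(*_{\bot})^{2}$ and $(*_{\mathcal{F}})^{2}$.

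Next I would invoke the standard identity for the Hodge star on a single oriented positive-definite inner product space, namely $**=(-1)^{k(\dim-k)}$ on $k$-forms. Applied to the $q$-dimensional space $T\mathcal{F}^{\bot*}$ this gives $(*_{\bot})^{2}=(-1)^{u(q-u)}$ on $\Lambda^{u}$, and applied to the $p$-dimensional space $T\mathcal{F}^{*}$ it gives $(*_{\mathcal{F}})^{2}=(-1)^{v(p-v)}$ on $\Lambda^{v}$; here the positive-definiteness of the bundle-like metric guarantees there is no additional signature sign. Collecting the four contributions yields $*^{2}=(-1)^{E}$ with $E=(q-u)v+u(p-v)+u(q-u)+v(p-v)$.

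Finally I would simplify $E$ modulo $2$ using $u^{2}\equiv u$, $v^{2}\equiv v$, and $2uv\equiv 0\pmod 2$, reducing $E$ to $up+uq+vp+vq+u+v$, and then check that expanding $(u+v)(p+q+1)=up+uq+u+vp+vq+v$ gives exactly the same expression modulo $2$. This establishes $*^{2}=(-1)^{(u+v)(p+q+1)}$ as claimed. The main obstacle is purely the sign bookkeeping: the one genuinely error-prone point is remembering to substitute the shifted bidegree $(q-u,p-v)$ into the exponent of Lemma \ref{ta15} for the second application, since omitting that shift is the quickest route to a wrong sign. Everything after that is routine modular arithmetic.
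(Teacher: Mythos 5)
Your proposal is correct and follows essentially the same route as the paper: apply Lemma \ref{ta15} twice with the bidegree shifted to $(q-u,p-v)$ on the second application, insert the squared single-factor star identities, and reduce the exponent modulo $2$. The only cosmetic difference is that you write $(*_{\bot})^{2}=(-1)^{u(q-u)}$ and $(*_{\mathcal{F}})^{2}=(-1)^{v(p-v)}$ where the paper uses the congruent forms $(-1)^{u(q+1)}$ and $(-1)^{v(p+1)}$.
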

\begin{proof}
Restricted to $\Lambda^{u}T\mathcal{F}^{\bot*}$ and $\Lambda^{v}T\mathcal{F}^{*}$, we have $*_{\bot}^{2}=(-1)^{u(q+1)}$ and $*_{\mathcal{F}}^{2}=(-1)^{v(p+1)}$.
\begin{eqnarray*}
*^{2}&=&(-1)^{(q-u)v}(-1)^{u(p-v)}*_{\bot}^{2}\otimes *_{\mathcal{F}}^{2}\\ 
&=&(-1)^{(q-u)v}(-1)^{u(p-v)}(-1)^{u(q+1)}(-1)^{v(p+1)}id_{\bot}\otimes id_{\mathcal{F}}\\
&=&(-1)^{(u+v)(p+q+1)}id_{\bot}\otimes id_{\mathcal{F}}.
\end{eqnarray*}
\end{proof}
 
\begin{lemma}
\label{ta17}
(Formula 17 in \cite{lopez2001long}) The adjoint $\delta_{\mathcal{F}}=d_{\mathcal{F}}^{*}$ of $d_{\mathcal{F}}$ is given by
$$\delta_{\mathcal{F}}\beta=d_{\mathcal{F}}^{*}\beta=(-1)^{pk+p+1}*_{\mathcal{F}}d_{\mathcal{F}}*_{\mathcal{F}}\beta,$$ for any $\beta\in\Omega^{0,k}\left(M,\mathcal{F}\right)$.
\end{lemma}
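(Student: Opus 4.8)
The plan is to verify directly that the operator $P:=(-1)^{pk+p+1}\,*_{\mathcal{F}}d_{\mathcal{F}}*_{\mathcal{F}}$, which sends $\Omega^{0,k}(M,\mathcal{F})$ to $\Omega^{0,k-1}(M,\mathcal{F})$, is the formal adjoint of $d_{\mathcal{F}}$ for the inner product $(\cdot,\cdot)$; by the definition $\delta_{\mathcal{F}}=d_{\mathcal{F}}^{*}$ this is exactly the asserted identity. So I would fix $\alpha\in\Omega^{0,k-1}(M,\mathcal{F})$ and $\beta\in\Omega^{0,k}(M,\mathcal{F})$ and aim to show $(d_{\mathcal{F}}\alpha,\beta)=(\alpha,P\beta)$. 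The starting point is the defining property of the leafwise star, $\gamma\wedge*_{\mathcal{F}}\mu=\langle\gamma,\mu\rangle_{\mathcal{F}}\,\chi_{\mathcal{F}}$ for leafwise forms of equal degree, together with the leafwise Leibniz rule
\[
d_{\mathcal{F}}(\alpha\wedge*_{\mathcal{F}}\beta)=d_{\mathcal{F}}\alpha\wedge*_{\mathcal{F}}\beta+(-1)^{k-1}\,\alpha\wedge d_{\mathcal{F}}*_{\mathcal{F}}\beta .
\]
Rearranging, the integrand $\langle d_{\mathcal{F}}\alpha,\beta\rangle_{\mathcal{F}}\,\chi_{\mathcal{F}}=d_{\mathcal{F}}\alpha\wedge*_{\mathcal{F}}\beta$ that computes the left-hand side becomes an exact leafwise $p$-form minus $(-1)^{k-1}\alpha\wedge d_{\mathcal{F}}*_{\mathcal{F}}\beta$.

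Next I would turn the second term into an inner product against $P\beta$. Since $d_{\mathcal{F}}*_{\mathcal{F}}\beta$ has leafwise degree $p-k+1$, Lemma \ref{ta16} in the leafwise form $*_{\mathcal{F}}^{2}=(-1)^{v(p+1)}$ gives $d_{\mathcal{F}}*_{\mathcal{F}}\beta=(-1)^{(p-k+1)(p+1)}*_{\mathcal{F}}\bigl(*_{\mathcal{F}}d_{\mathcal{F}}*_{\mathcal{F}}\beta\bigr)$, whence $\alpha\wedge d_{\mathcal{F}}*_{\mathcal{F}}\beta=(-1)^{(p-k+1)(p+1)}\langle\alpha,*_{\mathcal{F}}d_{\mathcal{F}}*_{\mathcal{F}}\beta\rangle_{\mathcal{F}}\,\chi_{\mathcal{F}}$. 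Collecting the sign $-(-1)^{k-1}=(-1)^{k}$ from the rearrangement with the sign $(-1)^{(p-k+1)(p+1)}$ just produced, and reducing modulo $2$, yields $(-1)^{k}(-1)^{(p-k+1)(p+1)}=(-1)^{pk+p+1}$, which is precisely the constant in the statement. This step is purely algebraic bookkeeping, identical to the classical computation of the codifferential $(-1)^{p(k+1)+1}*d*$ on a $p$-dimensional oriented Riemannian manifold, and I would not write it out in full.

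The one substantive step is to show that the exact leafwise term integrates away. Writing $\eta=\alpha\wedge*_{\mathcal{F}}\beta\in\Omega^{0,p-1}(M,\mathcal{F})$ and recalling $\mathrm{vol}=*_{\bot}(1)\wedge\chi_{\mathcal{F}}$ up to sign, I would wedge $d_{\mathcal{F}}\eta$ with the transverse top form $*_{\bot}(1)$ and use the bigrading $d=d_{0,1}+d_{1,0}+d_{2,-1}$: the components $d_{1,0}\eta$ and $d_{2,-1}\eta$ raise the transverse degree and so vanish against $*_{\bot}(1)$, giving $*_{\bot}(1)\wedge d\eta=*_{\bot}(1)\wedge d_{\mathcal{F}}\eta$. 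Expanding $d\bigl(*_{\bot}(1)\wedge\eta\bigr)$ and applying Stokes' theorem on the closed manifold $M$ then reduces the question to the residual term $\int_{M} d\bigl(*_{\bot}(1)\bigr)\wedge\eta=\int_{M} d_{0,1}\bigl(*_{\bot}(1)\bigr)\wedge\eta$. I expect this to be the main obstacle: $d_{0,1}(*_{\bot}(1))$ encodes the mean curvature of the leaves, and it is exactly here that the bundle-like (Riemannian) hypothesis is indispensable. Rather than analyze the mean curvature by hand, I would invoke the quoted fact from \cite{lopez2001long} that the formal adjoint $\delta_{\mathcal{F}}$ restricts on each leaf to the codifferential of that leaf in the induced metric; this is precisely the statement that the residual transverse term contributes nothing to the adjoint, so the identity reduces to the classical Hodge-star formula for the codifferential applied leafwise, completing the argument.
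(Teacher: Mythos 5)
Your proposal is correct and follows essentially the same route as the paper, whose proof simply remarks that the standard argument for $d^{*}=(-1)^{nk+n+1}*d*$ applies leafwise; you carry that computation out explicitly (with the right signs) and correctly isolate the only foliation-specific issue, namely the transverse residual term $\int_{M}d_{0,1}\bigl(*_{\bot}(1)\bigr)\wedge\eta$, which vanishes because the bundle-like metric makes the transverse volume form holonomy-invariant. The only remark worth making is that once you invoke the cited fact that $\delta_{\mathcal{F}}$ restricts on each leaf to that leaf's codifferential, the lemma already follows from the classical Hodge-star formula applied leaf by leaf, so the integration-by-parts scaffolding you set up becomes logically redundant at that point.
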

\begin{proof}
The standard proof that $d^{*}=\left(-1\right)^{nk+n+1}*d*$ on $n$-manifolds applies on a foliated manifold in a local neighborhood. 
\end{proof}

\begin{lemma}
\label{18}
$\omega\lrcorner=(-1)^{pk+p}*_{\mathcal{F}}\left(\omega\wedge\right)*_{\mathcal{F}}$ for all $\omega\in\Omega^{0,k}\left(M,\mathcal{F}\right)$.
\end{lemma}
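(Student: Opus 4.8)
The plan is to recognize $\omega\lrcorner$ as the pointwise metric adjoint of the exterior multiplication operator $\omega\wedge:\Omega^{0,v}\left(M,\mathcal{F}\right)\to\Omega^{0,v+k}\left(M,\mathcal{F}\right)$; that is, $\omega\lrcorner$ is the leafwise interior product (contraction) by the $k$-form $\omega$, characterized fibrewise by $\left\langle\omega\wedge\alpha,\beta\right\rangle=\left\langle\alpha,\omega\lrcorner\,\beta\right\rangle$. The target identity is then the exact algebraic analogue, for $\omega\wedge$, of the expression $\delta_{\mathcal{F}}=(-1)^{pk+p+1}*_{\mathcal{F}}d_{\mathcal{F}}*_{\mathcal{F}}$ established for $d_{\mathcal{F}}$ in Lemma \ref{ta17}. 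Since $\omega\wedge$ is purely algebraic and no integration by parts (Stokes step) intervenes, one expects the sign to differ from that of $\delta_{\mathcal{F}}$ precisely by the absence of the extra $+1$ in the exponent, giving $(-1)^{pk+p}$.

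Because both $\omega\lrcorner$ and $*_{\mathcal{F}}(\omega\wedge)*_{\mathcal{F}}$ are $C^{\infty}(M)$-linear, tensorial, and act tangentially to the leaves, it suffices to verify the identity fibrewise on $\Lambda^{0,\bullet}T_{x}^{*}\mathcal{F}$ for each $x\in M$; as in the proof of Lemma \ref{ta17}, this reduces the global leafwise statement to the classical Hodge identity on a single $p$-dimensional inner product space, where $*_{\mathcal{F}}$ is the ordinary star of that fibre. First I would fix $\alpha,\beta\in\Lambda^{0,\bullet}T_{x}^{*}\mathcal{F}$ of the appropriate complementary degrees and start from the defining relation $\alpha\wedge*_{\mathcal{F}}\gamma=\left\langle\alpha,\gamma\right\rangle\chi_{\mathcal{F}}$ of the leafwise star. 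Writing $\left\langle\omega\wedge\alpha,\beta\right\rangle\chi_{\mathcal{F}}=(\omega\wedge\alpha)\wedge*_{\mathcal{F}}\beta$, I would move $\omega$ past $\alpha$ by graded commutativity and regroup the product as $\alpha\wedge(\omega\wedge*_{\mathcal{F}}\beta)$.

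The next step is to recognize the factor $\omega\wedge*_{\mathcal{F}}\beta$ as $*_{\mathcal{F}}$ applied to the sought contraction, i.e. to apply $*_{\mathcal{F}}^{-1}$. Here I would use the involution formula $*_{\mathcal{F}}^{2}=(-1)^{v(p+1)}$ on $\Lambda^{v}T\mathcal{F}^{*}$ recorded in the proof of Lemma \ref{ta16} to replace $*_{\mathcal{F}}^{-1}$ by a signed $*_{\mathcal{F}}$. Matching the outcome against the defining relation for $*_{\mathcal{F}}$ then identifies $\omega\lrcorner\,\beta$ with $*_{\mathcal{F}}(\omega\wedge*_{\mathcal{F}}\beta)$ up to a scalar sign, which is exactly the asserted operator identity.

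The main obstacle is purely the sign bookkeeping. Three sources of signs must be combined: the graded commutation of the $k$-form $\omega$ past $\alpha$, the two degree reversals $v\mapsto p-v$ produced by the two applications of $*_{\mathcal{F}}$, and the involution constant $(-1)^{v(p+1)}$ coming from $*_{\mathcal{F}}^{2}$. I would organize this by tracking each degree through the displayed chain of equalities and checking the bookkeeping against the model computation in Lemma \ref{ta17}; assembling the contributions is designed to collapse the $v$-dependent terms and yield the factor $(-1)^{pk+p}$, completing the proof.
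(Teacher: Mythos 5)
Your argument is correct in substance but takes a genuinely different route from the paper's. The paper never touches the defining relation of $*_{\mathcal{F}}$ directly: it starts from the already-known ambient identity $\omega\lrcorner\beta=(-1)^{n(k+1)}*\left(\omega\wedge\right)*\beta$ on the $n$-manifold $M$ and pushes it down to the leaves via the bigraded factorization $*=(-1)^{(q-u)v}*_{\bot}\otimes *_{\mathcal{F}}$ of Lemma \ref{ta15}. Concretely, for $\beta\in\Omega^{0,k}$ one writes $*\beta=(-1)^{qk}\,\tau\wedge *_{\mathcal{F}}\beta$ with $\tau=*_{\bot}(1)$ the transversal volume form, commutes $\omega$ past $\tau$ at the cost of $(-1)^{q}$, applies $*$ once more, and the $n$- and $q$-dependent signs recombine through $n=p+q$ into $(-1)^{p(k+1)}=(-1)^{pk+p}$. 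You instead rerun, fibrewise on $\Lambda^{0,\bullet}T_{x}^{*}\mathcal{F}$, the standard linear-algebra derivation of the adjoint of exterior multiplication: the relation $\alpha\wedge *_{\mathcal{F}}\gamma=\left\langle\alpha,\gamma\right\rangle\chi_{\mathcal{F}}$, graded commutation, and $*_{\mathcal{F}}^{2}=(-1)^{v(p+1)}$. That reduction is legitimate since both operators are tensorial, and your route is more self-contained: it consumes neither Lemma \ref{ta15} nor the ambient formula (which is itself proved by exactly your computation with $p$ replaced by $n$). What the paper's route buys is uniformity: the same $*_{\bot}\otimes *_{\mathcal{F}}$ bookkeeping is recycled verbatim in Lemmas \ref{ta28} and \ref{ta29} for forms of general $(u,v)$ type. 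Your sanity check that the sign should differ from that of Lemma \ref{ta17} only by the missing $+1$ (no Stokes/Leibniz sign for the purely algebraic operator $\omega\wedge$) is also correct.

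One piece of indexing needs repair, though it originates in the lemma's own statement rather than in your method. You set up $\omega$ as a $k$-form with $\omega\wedge:\Omega^{0,v}\to\Omega^{0,v+k}$ and assert that the $v$-dependent signs collapse to $(-1)^{pk+p}$. They do not: carrying out your own chain with $\alpha\in\Lambda^{v}$ and $\beta\in\Lambda^{v+k}$ gives
\begin{equation*}
\omega\lrcorner\beta=(-1)^{kv+v(p+1)}*_{\mathcal{F}}\left(\omega\wedge\right)*_{\mathcal{F}}\beta=(-1)^{v(p+k+1)}*_{\mathcal{F}}\left(\omega\wedge\right)*_{\mathcal{F}}\beta,
\end{equation*}
whose sign genuinely depends on $v$ whenever $p+k+1$ is odd (already for $p$ odd, $k=1$); for $p=3$, $k=2$ the left sign is $+1$ for all $v$ while $(-1)^{pk+p}=-1$, so the literal statement ``for all $\omega\in\Omega^{0,k}$'' is false as an operator identity. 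The paper's proof, and all its later uses, make the intended reading clear: $\omega$ is a leafwise $1$-form and $k$ denotes the degree of the form $\beta$ being acted upon. Under that reading your sign becomes $(-1)^{(k-1)(p+2)}=(-1)^{p(k-1)}=(-1)^{pk+p}$, exactly as claimed. So your proof is complete once you fix $\deg\omega=1$ and let $k=\deg\beta$; as literally written, the claimed collapse of the $v$-dependence fails.
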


\begin{proof}
Suppose $\tau$ denotes the tangential volume form. For any $\beta\in\Omega^{0,k}$, we have
\begin{eqnarray*}
\omega\lrcorner\beta&=&\left(-1\right)^{n\left(k+1\right)}*\left(\omega\wedge\right)*\beta\\
&=&\left(-1\right)^{n\left(k+1\right)}*\left(\omega\wedge\right)\left(*_{\bot}\otimes*_{\mathcal{F}}\right)\left(1\otimes\beta\right)\\
&=&\left(-1\right)^{n\left(k+1\right)}\left(-1\right)^{qk}\left(-1\right)^{q}*\left(\tau\wedge\omega\wedge*_{\mathcal{F}}\beta\right)\\
&=&\left(-1\right)^{n\left(k+1\right)}\left(-1\right)^{qk}\left(-1\right)^{q}\left(*_{\bot}\otimes*_{\mathcal{F}}\right)\left(\tau\wedge\omega\wedge*_{\mathcal{F}}\beta\right)\\
&=&\left(-1\right)^{p\left(k+1\right)}*_{\mathcal{F}}\left(\omega\wedge\right)*_{\mathcal{F}}\beta.
\end{eqnarray*}
\end{proof}

By Lemmas above and the identity $*_{\mathcal{F}}^{2}=\left(-1\right)^{k\left(p-k\right)}$, we have on $\Omega^{0,k}\left(M,\mathcal{F}\right)$
\begin{eqnarray*}
\left(\omega\lrcorner\right)*_{\mathcal{F}}&=&\left(-1\right)^{k}*_{\mathcal{F}}\left(\omega\wedge\right)\\
*_{\mathcal{F}}\left(\omega\lrcorner\right)&=&\left(-1\right)^{k+1}\left(\omega\wedge\right)*_{\mathcal{F}}\\
*_{\mathcal{F}}d_{\mathcal{F}}^{*}&=&\left(-1\right)^{k}d_{\mathcal{F}}*_{\mathcal{F}}\\
d_{\mathcal{F}}^{*}*_{\mathcal{F}}&=&\left(-1\right)^{k+1}*_{\mathcal{F}}d_{\mathcal{F}}.
\end{eqnarray*}
The adjoint of the leafwise differential $\left(d_{\mathcal{F}}+\omega\wedge\right)$ is $\left(d_{\mathcal{F}}^{*}+\omega\lrcorner\right)$.  We denote the Laplacian corresponding to the differential $d_{\mathcal{F}}+\omega\wedge$ by $\Delta_{\mathcal{F},\omega}$. Then
$$\Delta_{\mathcal{F},\omega}=\left(d_{\mathcal{F}}+\omega\wedge\right)\left(d_{\mathcal{F}}^{*}+\omega\lrcorner\right)+\left(d_{\mathcal{F}}^{*}+\omega\lrcorner\right)\left(d_{\mathcal{F}}+\omega\wedge\right).$$

\begin{proposition}
\label{ta19}
If $\omega\in\Omega^{0,1}\left(M,\mathcal{F}\right)$ is a leafwise closed $1$-form, then the Hodge star operator $*_{\mathcal{F}}$ satisfies   $$*_{\mathcal{F}}\Delta_{\mathcal{F},\omega}=\Delta_{\mathcal{F},-\omega}*_{\mathcal{F}}.$$
\end{proposition}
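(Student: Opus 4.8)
The plan is to factor this second-order identity through two first-order intertwining relations between $*_{\mathcal{F}}$ and the twisted differentials attached to $\omega$ and to $-\omega$, and then to compose them. Throughout, write $D = d_{\mathcal{F}} + \omega\wedge$ and $D^{*} = d_{\mathcal{F}}^{*} + \omega\lrcorner$ for the $\omega$-twisted leafwise differential and its adjoint, and $D_{-} = d_{\mathcal{F}} - \omega\wedge$, $D_{-}^{*} = d_{\mathcal{F}}^{*} - \omega\lrcorner$ for the corresponding operators built from $-\omega$, so that $\Delta_{\mathcal{F},\omega} = DD^{*} + D^{*}D$ and $\Delta_{\mathcal{F},-\omega} = D_{-}D_{-}^{*} + D_{-}^{*}D_{-}$.

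First I would establish, on $\Omega^{0,k}\left(M,\mathcal{F}\right)$, the two relations
\begin{align*}
*_{\mathcal{F}}\,D^{*} &= (-1)^{k}\,D_{-}\,*_{\mathcal{F}}, \\
*_{\mathcal{F}}\,D &= (-1)^{k+1}\,D_{-}^{*}\,*_{\mathcal{F}},
\end{align*}
using only the four commutation identities displayed just before the statement. For the first, the identities $*_{\mathcal{F}}d_{\mathcal{F}}^{*} = (-1)^{k}d_{\mathcal{F}}*_{\mathcal{F}}$ and $*_{\mathcal{F}}(\omega\lrcorner) = (-1)^{k+1}(\omega\wedge)*_{\mathcal{F}}$ add to give $*_{\mathcal{F}}(d_{\mathcal{F}}^{*} + \omega\lrcorner) = (-1)^{k}(d_{\mathcal{F}} - \omega\wedge)*_{\mathcal{F}}$. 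For the second, I would rewrite $d_{\mathcal{F}}^{*}*_{\mathcal{F}} = (-1)^{k+1}*_{\mathcal{F}}d_{\mathcal{F}}$ as $*_{\mathcal{F}}d_{\mathcal{F}} = (-1)^{k+1}d_{\mathcal{F}}^{*}*_{\mathcal{F}}$ and $(\omega\lrcorner)*_{\mathcal{F}} = (-1)^{k}*_{\mathcal{F}}(\omega\wedge)$ as $*_{\mathcal{F}}(\omega\wedge) = (-1)^{k}(\omega\lrcorner)*_{\mathcal{F}}$, and then add, the common factor $(-1)^{k+1}$ pulling out to leave $D_{-}^{*}*_{\mathcal{F}}$.

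Next I would compose these two relations, being careful to apply each on the appropriate degree. Since $D^{*}$ lowers the leafwise degree by one and $D$ raises it by one, evaluating $*_{\mathcal{F}}DD^{*}$ on $\Omega^{0,k}$ uses the $D^{*}$ relation on $\Omega^{0,k}$ and the $D$ relation on $\Omega^{0,k-1}$, producing the sign $(-1)^{k}(-1)^{k}=1$; likewise $*_{\mathcal{F}}D^{*}D$ on $\Omega^{0,k}$ uses the $D$ relation on $\Omega^{0,k}$ and the $D^{*}$ relation on $\Omega^{0,k+1}$, producing $(-1)^{k+1}(-1)^{k+1}=1$. Hence
$$*_{\mathcal{F}}\,DD^{*} = D_{-}^{*}D_{-}\,*_{\mathcal{F}}, \qquad *_{\mathcal{F}}\,D^{*}D = D_{-}D_{-}^{*}\,*_{\mathcal{F}}$$
on each $\Omega^{0,k}\left(M,\mathcal{F}\right)$, and summing gives $*_{\mathcal{F}}\Delta_{\mathcal{F},\omega} = (D_{-}^{*}D_{-} + D_{-}D_{-}^{*})*_{\mathcal{F}} = \Delta_{\mathcal{F},-\omega}*_{\mathcal{F}}$, as required.

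The only real obstacle is the sign bookkeeping in the composition: each first-order relation carries a sign depending on the degree of the form it acts on, and the point is that the two intermediate applications inside $DD^{*}$ (respectively $D^{*}D$) occur on degrees differing by one in a way that makes the two signs equal, so each product collapses to a perfect square $(-1)^{2k}$ (respectively $(-1)^{2(k+1)}$) and cancels. I would therefore state each first-order relation with its degree explicit and verify the two evaluation degrees separately. I note that leafwise closedness of $\omega$ is not actually used in this computation—only the pointwise commutation relations of Lemmas \ref{18} and \ref{ta17} enter—although the hypothesis is natural, since $\Delta_{\mathcal{F},\omega}$ is the Laplacian of a genuine cochain complex precisely when $\left(d_{\mathcal{F}}^{\omega}\right)^{2}=0$.
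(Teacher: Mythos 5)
Your proposal is correct and is essentially the paper's own argument: the paper proves the identity by the same two first-order intertwining relations $*_{\mathcal{F}}(d_{\mathcal{F}}^{*}+\omega\lrcorner)=(-1)^{k}(d_{\mathcal{F}}-\omega\wedge)*_{\mathcal{F}}$ and $*_{\mathcal{F}}(d_{\mathcal{F}}+\omega\wedge)=(-1)^{k+1}(d_{\mathcal{F}}^{*}-\omega\lrcorner)*_{\mathcal{F}}$, composed with the same degree-dependent signs collapsing to $(-1)^{2k}$ and $(-1)^{2(k+1)}$. Your packaging of the computation into two named relations, and your side remark that closedness of $\omega$ is not needed for this pointwise identity, are accurate but do not change the substance.
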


\begin{proof}
 For all $\beta\in\Omega^{0,k}\left(M,\mathcal{F}\right)$, we have
\begin{eqnarray*}
*_{\mathcal{F}}\Delta_{\mathcal{F},\omega}\beta = *_{\mathcal{F}}(d_{\mathcal{F}}+\omega\wedge)(d^{*}_{\mathcal{F}}+\omega\lrcorner)\beta+*_{\mathcal{F}}(d^{*}_{\mathcal{F}}+\omega\lrcorner)(d_{\mathcal{F}}+\omega\wedge)\beta\\
=\left(-1\right)^{k}(d^{*}_{\mathcal{F}}-\omega\lrcorner)*_{\mathcal{F}}(d^{*}_{\mathcal{F}}+\omega\lrcorner)\beta+\left(-1\right)^{k+1}(d_{\mathcal{F}}-\omega\wedge)*_{\mathcal{F}}(d_{\mathcal{F}}+\omega\wedge)\beta\\
= \left(-1\right)^{k}\left(-1\right)^{k}(d^{*}_{\mathcal{F}}-\omega\lrcorner)(d_{\mathcal{F}}-\omega\wedge)*_{\mathcal{F}}\beta+\left(-1\right)^{k+1}\left(-1\right)^{k+1}(d_{\mathcal{F}}-\omega\wedge)(d^{*}_{\mathcal{F}}-\omega\lrcorner)*_{\mathcal{F}}\beta\\
= \left((d^{*}_{\mathcal{F}}-\omega\lrcorner)(d_{\mathcal{F}}-\omega\wedge)+(d_{\mathcal{F}}-\omega\wedge)(d^{*}_{\mathcal{F}}-\omega\lrcorner)\right)*_{\mathcal{F}}\beta\\
=\Delta_{\mathcal{F},-\omega}*_{\mathcal{F}}\beta.
\end{eqnarray*} 
Thus the operator $*_{\mathcal{F}}$ maps $\Delta_{\mathcal{F},\omega}$-harmonic forms to $\Delta_{\mathcal{F},-\omega}$-harmonic forms.
\end{proof}

\begin{corollary}
\label{ta20}
If we restrict the Laplacian $\Delta_{\mathcal{F},\omega}$ on $\Omega^{0,v}\left(M,\mathcal{F}\right)$, then $\ker\Delta_{\mathcal{F},\omega}$ is finite dimensional, and every reduced leafwise Morse-Novikov cohomology class has a $\Delta_{\mathcal{F},\omega}$ harmonic representative.
\end{corollary}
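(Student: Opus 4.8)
The plan is to treat $\Delta_{\mathcal F,\omega}$ as a lower-order perturbation of the untwisted leafwise Laplacian $\Delta_{\mathcal F}$ and then to transport the leafwise Hodge theory behind Proposition \ref{ta7} to the twisted complex. Write $D=d_{\mathcal F}+\omega\wedge$ and, using the adjoint computation preceding the statement, $D^{*}=d_{\mathcal F}^{*}+\omega\lrcorner$, so that $\Delta_{\mathcal F,\omega}=DD^{*}+D^{*}D$. Since $\omega\wedge$ and $\omega\lrcorner$ are algebraic (zeroth-order) fibrewise adjoint operators, expanding the Laplacian gives
\[
\Delta_{\mathcal F,\omega}=\Delta_{\mathcal F}+L,
\]
where $L$ collects the mixed terms $d_{\mathcal F}(\omega\lrcorner)+(\omega\lrcorner)d_{\mathcal F}+d_{\mathcal F}^{*}(\omega\wedge)+(\omega\wedge)d_{\mathcal F}^{*}$ together with the zeroth-order term $(\omega\wedge)(\omega\lrcorner)+(\omega\lrcorner)(\omega\wedge)=|\omega|^{2}$. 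Each summand of $L$ is a leafwise differential operator of order at most one, so $\Delta_{\mathcal F,\omega}$ and $\Delta_{\mathcal F}$ have the same second-order (leafwise) symbol; in particular $\Delta_{\mathcal F,\omega}$ is a non-negative, formally self-adjoint, leafwise elliptic operator of order two with the same transverse structure as $\Delta_{\mathcal F}$.

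First I would record the purely algebraic consequence that, on the compact manifold $M$, a smooth $\alpha\in\Omega^{0,v}(M,\mathcal F)$ is $\Delta_{\mathcal F,\omega}$-harmonic if and only if $D\alpha=0$ and $D^{*}\alpha=0$. This follows at once from
\[
(\Delta_{\mathcal F,\omega}\alpha,\alpha)=\|D\alpha\|^{2}+\|D^{*}\alpha\|^{2},
\]
valid for the $L^{2}$ inner product because $D^{*}$ is the genuine formal adjoint of $D$. Hence every harmonic form is $d^{\omega}_{\mathcal F}$-closed and determines a reduced class in $\bar H^{v}_{\omega}(M,d_{\mathcal F})$, giving the natural map $\ker\Delta_{\mathcal F,\omega}\to\bar H^{v}_{\omega}(M,d_{\mathcal F})$, $\alpha\mapsto[\alpha]$.

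The core step is the leafwise Hodge decomposition
\[
\Omega^{0,v}(M,\mathcal F)=\ker\Delta_{\mathcal F,\omega}\ \oplus\ \overline{\mathrm{im}\,d^{\omega,v-1}_{\mathcal F}}\ \oplus\ \overline{\mathrm{im}\,(d^{\omega}_{\mathcal F})^{*}},
\]
an $L^{2}$-orthogonal splitting into harmonic, exact, and co-exact pieces. Because $\Delta_{\mathcal F,\omega}$ has the same principal symbol as $\Delta_{\mathcal F}$ and $\mathcal F$ is Riemannian with a bundle-like metric, my intention is to run the long-time leafwise heat-flow argument of {\'A}lvarez L{\'o}pez and Kordyukov that underlies Proposition \ref{ta7}, now with $e^{-t\Delta_{\mathcal F,\omega}}$ in place of $e^{-t\Delta_{\mathcal F}}$: the resulting smoothing projection onto $\ker\Delta_{\mathcal F,\omega}$ yields the orthogonal splitting above. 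Granting it, $\ker d^{\omega,v}_{\mathcal F}=\ker\Delta_{\mathcal F,\omega}\oplus\overline{\mathrm{im}\,d^{\omega,v-1}_{\mathcal F}}$, so the map $\alpha\mapsto[\alpha]$ of the previous paragraph becomes an isomorphism and each reduced Morse-Novikov class has a unique harmonic representative.

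The hard part will be the analytic input, not the formal manipulations. Leafwise elliptic operators are not elliptic on $M$, so neither the closed range of $d^{\omega}_{\mathcal F}$ (equivalently, the existence of the orthogonal decomposition) nor the finite dimensionality of $\ker\Delta_{\mathcal F,\omega}$ follows from soft functional analysis; both rest on the transverse regularity enforced by the bundle-like metric. Of these, the finite dimensionality is the most delicate assertion and is the step I expect to be the main obstacle: it requires the full spectral/heat-trace control of the leafwise operator rather than merely the Hodge isomorphism of Proposition \ref{ta7}. My strategy there is to verify that the $\omega$-dependent perturbation $L$ preserves every hypothesis of the {\'A}lvarez L{\'o}pez--Kordyukov framework---non-negativity, formal self-adjointness, and leafwise ellipticity with smoothly varying transverse data, all of which hold because $\omega$ is a fixed smooth leafwise $1$-form and $L$ is a genuine leafwise differential operator---and then to invoke the smoothing properties of $e^{-t\Delta_{\mathcal F,\omega}}$ exactly as in the untwisted case, so that the desired conclusions emerge as outputs of the same machinery.
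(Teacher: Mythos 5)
Your overall route is the same one the paper takes: observe that $\Delta_{\mathcal{F},\omega}$ differs from $\Delta_{\mathcal{F}}$ only by lower-order terms, hence is leafwise elliptic with the same principal symbol, and then transport the leafwise Hodge theory of {\'A}lvarez L{\'o}pez--Kordyukov (the machinery behind Proposition \ref{ta7}) to the twisted complex. The paper's own proof is two sentences and does exactly this by citation; your account of the identity $(\Delta_{\mathcal{F},\omega}\alpha,\alpha)=\|D\alpha\|^{2}+\|D^{*}\alpha\|^{2}$, of the orthogonal splitting into harmonic, exact and co-exact parts, and of why the heat-flow argument survives the perturbation is a more explicit version of the same argument. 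For the second clause of the corollary (every reduced class has a harmonic representative) this is fine, modulo verifying that the perturbed operator satisfies the hypotheses of the cited framework, which you correctly identify as the real analytic content.

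The genuine gap is the first clause, and your instinct that finite dimensionality is ``the main obstacle'' is exactly right --- but the remedy you propose (spectral/heat-trace control of the leafwise operator) cannot deliver it, because the claim is not true in the generality stated. The kernel of a leafwise elliptic operator on a foliated manifold is typically an infinite-dimensional Fr\'echet space: already for $\omega=0$ and the product foliation of $S^{1}\times S^{1}$ by the first-factor circles, $\ker\Delta_{\mathcal{F}}$ on $\Omega^{0,0}$ consists of all functions constant on leaves, i.e.\ all of $C^{\infty}(S^{1})$. Consistently, Proposition \ref{ta7} identifies $\ker\Delta^{k}_{\mathcal{F}}$ with the reduced leafwise cohomology $\bar{H}^{k}(M,d_{\mathcal{F}})$, which is in general infinite dimensional, and nothing about adding the zeroth-order twist by $\omega$ changes this. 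The paper's own proof silently omits any argument for finite dimensionality and establishes only the isomorphism $\ker\Delta_{\mathcal{F},\omega}\cong\bar{H}^{k}_{\omega}(M,d_{\mathcal{F}})$, so you are not missing a trick the author has; no argument along these lines will produce one. You should either drop that clause or prove only the harmonic-representative statement, as the paper in effect does.
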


\begin{proof}
Notice that the operator $\Delta_{\mathcal{F},\omega}$ is defined on all forms in $\Omega^{u,v}\left(M,\mathcal{F}\right)$, but it is elliptic when restricted on the forms $\Omega^{0,v}\left(M,\mathcal{F}\right)$ along the leaves of the foliation (Section 1 in \cite{lopez2001long}). Using this ellipticity and the arguments similar to \cite{lopez2001long},    we can conclude that $\mathcal{H}^{k}_{\omega}\left(M,d_{\mathcal{F}}\right)=\ker\Delta_{\mathcal{F},\omega}\subset \Omega^{0,k}\left(M,\mathcal{F}\right)$ is isomorphic to $\bar{H}_{\omega}^{k}\left(M,d_{\mathcal{F}}\right)$, and  $$\mathcal{H}^{k}_{\omega}\left(M,d_{\mathcal{F}}\right)\cong \bar{H}_{\omega}^{k}\left(M,d_{\mathcal{F}}\right).$$
\end{proof}

\begin{corollary}
\label{ta21}
$\bar{H}^{k}_{\omega}\left(M,d_{\mathcal{F}}\right)\cong \bar{H}_{-\omega}^{p-k}\left(M,d_{\mathcal{F}}\right)$.
\end{corollary}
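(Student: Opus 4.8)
The plan is to prove this Poincar{\'e} duality statement in exactly the way one proves Theorem \ref{ta8} (the $\omega=0$ case), by combining the Hodge-theoretic identification of reduced cohomology with harmonic forms from Corollary \ref{ta20} with the intertwining relation for the leafwise star operator established in Proposition \ref{ta19}. In other words, I would realize the desired isomorphism as the map induced by $*_{\mathcal{F}}$ on harmonic representatives, and the whole argument should reduce to checking that $*_{\mathcal{F}}$ is a bijection between the two relevant harmonic spaces.

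First I would invoke Corollary \ref{ta20} twice to obtain topological isomorphisms $\bar{H}^{k}_{\omega}(M,d_{\mathcal{F}}) \cong \mathcal{H}^{k}_{\omega}(M,d_{\mathcal{F}}) = \ker\Delta_{\mathcal{F},\omega}$ on $\Omega^{0,k}(M,\mathcal{F})$ and $\bar{H}^{p-k}_{-\omega}(M,d_{\mathcal{F}}) \cong \ker\Delta_{\mathcal{F},-\omega}$ on $\Omega^{0,p-k}(M,\mathcal{F})$. It therefore suffices to produce an isomorphism of the two finite-dimensional harmonic spaces $\ker\Delta_{\mathcal{F},\omega} \to \ker\Delta_{\mathcal{F},-\omega}$. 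The natural candidate is the leafwise star operator $*_{\mathcal{F}}\colon \Omega^{0,k}(M,\mathcal{F}) \to \Omega^{0,p-k}(M,\mathcal{F})$. By Proposition \ref{ta19} we have $*_{\mathcal{F}}\Delta_{\mathcal{F},\omega} = \Delta_{\mathcal{F},-\omega}*_{\mathcal{F}}$, so if $\Delta_{\mathcal{F},\omega}\beta = 0$ then $\Delta_{\mathcal{F},-\omega}(*_{\mathcal{F}}\beta) = *_{\mathcal{F}}\Delta_{\mathcal{F},\omega}\beta = 0$; hence $*_{\mathcal{F}}$ carries $\ker\Delta_{\mathcal{F},\omega}$ into $\ker\Delta_{\mathcal{F},-\omega}$.

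The one point that needs care, and essentially the only content beyond the two previous results, is bijectivity. Here I would use the identity $*_{\mathcal{F}}^{2} = (-1)^{k(p-k)}$ recorded above, which shows that $*_{\mathcal{F}}$ is already a linear isomorphism on forms with continuous inverse $\pm *_{\mathcal{F}}$; applying the intertwining of Proposition \ref{ta19} in the opposite direction (with $\omega$ replaced by $-\omega$) shows that $*_{\mathcal{F}}$ also maps $\ker\Delta_{\mathcal{F},-\omega}$ back into $\ker\Delta_{\mathcal{F},\omega}$. Consequently the restriction of $*_{\mathcal{F}}$ to the harmonic spaces is a genuine bijection. Composing the three isomorphisms yields $\bar{H}^{k}_{\omega}(M,d_{\mathcal{F}}) \cong \ker\Delta_{\mathcal{F},\omega} \cong \ker\Delta_{\mathcal{F},-\omega} \cong \bar{H}^{p-k}_{-\omega}(M,d_{\mathcal{F}})$, which is the claim.

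I do not anticipate a genuine obstacle in this argument: the Riemannian hypothesis has already delivered leafwise ellipticity and hence the Hodge identification in Corollary \ref{ta20}, so the remaining work is the purely algebraic fact that the invertible operator $*_{\mathcal{F}}$ intertwines $\Delta_{\mathcal{F},\omega}$ and $\Delta_{\mathcal{F},-\omega}$. The only thing I would be mildly careful about is tracking continuity: since the identifications furnished by Corollary \ref{ta20} are topological isomorphisms of Fr{\'e}chet spaces and $*_{\mathcal{F}}$ is continuous with continuous inverse, the composite is a topological isomorphism and not merely an abstract linear one, so the statement holds in the strong sense appropriate to the reduced theory.
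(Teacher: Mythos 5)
Your proposal is correct and follows essentially the same route as the paper: identify the reduced groups with the harmonic spaces via Corollary \ref{ta20} and transport between them using the intertwining relation $*_{\mathcal{F}}\Delta_{\mathcal{F},\omega}=\Delta_{\mathcal{F},-\omega}*_{\mathcal{F}}$ of Proposition \ref{ta19}. You merely spell out the bijectivity of $*_{\mathcal{F}}$ (via $*_{\mathcal{F}}^{2}=\pm 1$) and the continuity of the composite, which the paper leaves implicit.
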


\begin{proof}
Since the operator $*_{\mathcal{F}}$ maps $\Delta_{\mathcal{F},\omega}$-harmonic forms to $\Delta_{\mathcal{F},-\omega}$-harmonic forms, it induces the isomorphism $$\mathcal{H}^{k}_{\omega}\left(M,d_{\mathcal{F}}\right)\cong \mathcal{H}_{-\omega}^{p-k}\left(M,d_{\mathcal{F}}\right).$$
\end{proof}

\section{Extension of leafwise Morse-Novikov cohomology to forms of general $u,v$ type}
We now extend leafwise Morse-Novikov cohomology to forms of general $u,v$ type.
Let $\omega\in\Omega^{0,1}\left(M,\mathcal{F}\right)$ be a leafwise closed $1$-form which is not necessarily exact. We consider the twisted operator $d^{\omega}_{\mathcal{F}}:\Omega^{u,v}\left(M,\mathcal{F}\right)\rightarrow \Omega^{u,v+1}\left(M,\mathcal{F}\right)$ defined by $d^{\omega}_{\mathcal{F}}=d_{\mathcal{F}}+\omega\wedge$, where $d_{\mathcal{F}}$ is the exterior derivative along the leaf. 
\begin{proposition}
$\left(d_{\mathcal{F}}+\omega\wedge\right)^{2}=0.$ Therefore $d_{\mathcal{F}}+\omega\wedge$ is a differential of the sections of $\Omega^{u,v}\left(M,\mathcal{F}\right)$.
\end{proposition}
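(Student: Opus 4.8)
The plan is to expand the square directly and show that three of the four resulting terms vanish, mirroring the computation already carried out for leafwise forms $\Omega^{0,k}(M,\mathcal{F})$ but now accounting for a nontrivial transverse degree $u$. Writing $d^{\omega}_{\mathcal{F}}=d_{\mathcal{F}}+\omega\wedge$, I would compute
$$\left(d_{\mathcal{F}}+\omega\wedge\right)^{2}=d_{\mathcal{F}}^{2}+\bigl(d_{\mathcal{F}}\circ(\omega\wedge)+(\omega\wedge)\circ d_{\mathcal{F}}\bigr)+(\omega\wedge)(\omega\wedge),$$
and then treat the three groups of terms separately.

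The two outer terms are disposed of immediately. For the last term, since $\omega$ is a $1$-form we have $\omega\wedge\omega=0$, so $(\omega\wedge)(\omega\wedge)\beta=\omega\wedge\omega\wedge\beta=0$ on every $\Omega^{u,v}(M,\mathcal{F})$. For the first term, I would use that $d_{\mathcal{F}}$ here denotes the component $d_{0,1}$ of the full exterior derivative $d=d_{0,1}+d_{1,0}+d_{2,-1}$; extracting the leaf-degree-raising part of the identity $d^{2}=0$ gives $d_{0,1}^{2}=0$, exactly as was already observed in Section 2, and this holds on forms of arbitrary $u,v$ type, not merely on $\Omega^{0,k}(M,\mathcal{F})$.

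The crux is the middle cross term, where the transverse factor genuinely enters and where I expect the only real work to lie. I would first establish that $d_{\mathcal{F}}=d_{0,1}$ is a graded derivation on all of $\Omega^{*,*}(M,\mathcal{F})$: applying the graded Leibniz rule $d(\alpha\wedge\beta)=d\alpha\wedge\beta+(-1)^{|\alpha|}\alpha\wedge d\beta$ and collecting the components that raise the leaf degree by one shows $d_{0,1}(\alpha\wedge\beta)=(d_{0,1}\alpha)\wedge\beta+(-1)^{|\alpha|}\alpha\wedge(d_{0,1}\beta)$, since wedging preserves the bigrading and each component $d_{i,j}$ shifts it by $(i,j)$. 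Taking $\alpha=\omega$ (total degree $1$) and using that $\omega$ is leafwise closed, $d_{\mathcal{F}}\omega=0$, this yields $d_{\mathcal{F}}(\omega\wedge\beta)=-\omega\wedge d_{\mathcal{F}}\beta$ for every $\beta\in\Omega^{u,v}(M,\mathcal{F})$. Consequently the cross term $d_{\mathcal{F}}(\omega\wedge\beta)+\omega\wedge d_{\mathcal{F}}\beta$ cancels identically, and combining the three vanishings gives $\left(d_{\mathcal{F}}+\omega\wedge\right)^{2}=0$, so that $d^{\omega}_{\mathcal{F}}$ is a differential on $\Omega^{u,v}(M,\mathcal{F})$. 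The main obstacle is thus essentially bookkeeping: confirming that the derivation property of $d_{0,1}$ and the relation $d_{0,1}^{2}=0$, previously used only for leafwise forms, survive verbatim once the transverse factor $\Lambda^{u}N^{*}\mathcal{F}$ is present, which the bidegree decomposition guarantees.
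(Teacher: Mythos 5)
Your proof is correct, and the underlying algebra is the same as the paper's: expand the square and cancel using $d_{\mathcal{F}}^{2}=0$, $\omega\wedge\omega=0$, and the anticommutation $d_{\mathcal{F}}(\omega\wedge\beta)=-\omega\wedge d_{\mathcal{F}}\beta$ coming from $d_{\mathcal{F}}\omega=0$. The organization differs in one respect worth noting. The paper computes on a decomposable section $\alpha\wedge\beta$ with $\alpha$ the transverse factor and writes $d_{\mathcal{F}}(\alpha\wedge\beta)=(-1)^{u}\alpha\wedge d_{\mathcal{F}}\beta$, which tacitly assumes $d_{\mathcal{F}}\alpha=0$; this is legitimate because locally one may take $\alpha$ basic (as the paper records later, before Lemma \ref{ta29}), but it is an extra input. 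You instead work at the operator level: you extract the $(0,1)$-component of the Leibniz rule for $d$ to show that $d_{0,1}$ is itself a graded derivation for the total degree, and the $(0,2)$-component of $d^{2}=0$ to get $d_{0,1}^{2}=0$ on all bidegrees. This only ever uses $d_{\mathcal{F}}\omega=0$, needs no choice of basic representatives and no local chart, and so applies verbatim to an arbitrary smooth foliation. The paper's version is slightly more concrete and matches the pointwise computations used elsewhere in the section; yours is marginally cleaner and makes explicit why the earlier observation $d_{0,1}^{2}=0$ from Section 2 already covers general $(u,v)$ type. Either is acceptable.
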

\begin{proof}
Observe that for any section $\alpha\wedge\beta\in\Omega^{u,v}\left(M,\mathcal{F}\right)$, we have
\begin{eqnarray*}
\left(d_{\mathcal{F}}+\omega\wedge\right)^{2}\left(\alpha\wedge\beta\right)=\left(d_{\mathcal{F}}+\omega\wedge\right)\left((-1)^{u}\alpha\wedge d_{\mathcal{F}}\beta+\omega\wedge\left(\alpha\wedge\beta\right)\right)\\
=d_{\mathcal{F}}\left((-1)^{u}\alpha\wedge d_{\mathcal{F}}\beta\right)+(-1)^{u+1}\omega\wedge\left(\alpha\wedge d_{\mathcal{F}}\beta\right)+(-1)^{u}\left(\omega\wedge\left(\alpha\wedge d_{\mathcal{F}}\beta\right)\right)+\omega\wedge\left(\omega\wedge\left(\alpha\wedge\beta\right)\right)\\
=(-1)^{u+1}\left(\omega\wedge\left(\alpha\wedge d_{\mathcal{F}}\beta\right)-\omega\wedge\left(\alpha\wedge d_{\mathcal{F}}\beta\right)\right)=0.
\end{eqnarray*}
\end{proof}

We call the differential cochain complex $\left(\Omega^{*,*}\left(M,\mathcal{F}\right),d^{\omega}_{\mathcal{F}}\right)$ the general leafwise Morse-Novikov complex of the foliated manifold $\left(M,\mathcal{F}\right)$. The cohomology groups $$H^{*,*}_{\omega}\left(M,d_{\mathcal{F}}\right)=\frac{\ker\left(d_{\mathcal{F}}^{\omega}\right)}{\mathrm{im}\left(d_{\mathcal{F}}^{\omega}\right)}$$ of this cochain complex are called the general leafwise Morse-Novikov cohomology groups of $\left(M,\mathcal{F}\right)$. For the purpose of having Laplacian and Hodge decomposition, we need to consider reduced general leafwise Morse-Novikov cohomology $$\bar{H}^{*,*}_{\omega}\left(M,d_{\mathcal{F}}\right)=\frac{\ker\left(d_{\mathcal{F}}^{\omega}\right)}{\overline{\mathrm{im}\left(d_{\mathcal{F}}^{\omega}\right)}}.$$
Again the closure $\overline{\mathrm{im} \left(d_{\mathcal{F}}^{\omega}\right)}$ is taken with respect to the Frech{\'e}t topology on $\Omega^{*,*}\left(M,\mathcal{F}\right)$.

\begin{proposition}
\label{ta23}
If $\omega$ and $\theta=\omega+d_{\mathcal{F}}g$ are cohomologous in $H^{1}\left(M,d_{\mathcal{F}}\right)$, then for each $\ell,k$, the general leafwise Morse-Novikov cohomology groups $H_{\omega}^{\ell,k}\left(M,d_{\mathcal{F}}\right)$ and $H_{\theta}^{\ell,k}\left(M,d_{\mathcal{F}}\right)$ are isomorphic via the isomorphism $\left[\alpha\right]\mapsto \left[e^{-g}\alpha\right]$. 
\end{proposition}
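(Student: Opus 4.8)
```latex
The plan is to mimic exactly the structure of the proof of Proposition \ref{ta9}, since the construction of the isomorphism is formally identical once we check that the key algebraic identity continues to hold in the larger bigraded complex $\Omega^{*,*}(M,\mathcal{F})$. The candidate map is $\Phi([\alpha])=[e^{-g}\alpha]$ on each group $H_\omega^{\ell,k}(M,d_{\mathcal{F}})$, where $g\in\Omega^{0}(M)$ satisfies $d_{\mathcal{F}}g=\theta-\omega$. The whole argument rests on the conjugation identity
\[
\bigl(d_{\mathcal{F}}+\theta\wedge\bigr)\bigl(e^{-g}\alpha\bigr)=e^{-g}\bigl(d_{\mathcal{F}}+\omega\wedge\bigr)\alpha,
\]
which shows that multiplication by $e^{-g}$ intertwines the two twisted differentials. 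So first I would verify this identity for $\alpha\in\Omega^{\ell,k}(M,\mathcal{F})$: expanding $d_{\mathcal{F}}(e^{-g}\alpha)=e^{-g}d_{\mathcal{F}}\alpha-e^{-g}(d_{\mathcal{F}}g)\wedge\alpha$ and adding $\theta\wedge(e^{-g}\alpha)=e^{-g}(\omega+d_{\mathcal{F}}g)\wedge\alpha$, the two $d_{\mathcal{F}}g\wedge\alpha$ terms cancel, leaving $e^{-g}(d_{\mathcal{F}}+\omega\wedge)\alpha$.

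The one point deserving care is that $d_{\mathcal{F}}$ now acts on forms of type $(\ell,k)$ with $\ell>0$, not merely on leafwise forms; but since $e^{-g}$ is a degree-$(0,0)$ multiplier and $d_{\mathcal{F}}=d_{0,1}$ raises only the leafwise degree $v$, the Leibniz rule $d_{\mathcal{F}}(e^{-g}\alpha)=(d_{\mathcal{F}}e^{-g})\wedge\alpha+e^{-g}d_{\mathcal{F}}\alpha$ holds with the sign governed by the degree of $e^{-g}$, which is zero. Hence no extra sign intervenes and the computation is identical to the $\ell=0$ case treated in Proposition \ref{ta9}. Since $e^{-g}$ preserves the bidegree $(\ell,k)$, the map $\Phi$ sends $\ker(d_{\mathcal{F}}^{\omega})\cap\Omega^{\ell,k}$ into $\ker(d_{\mathcal{F}}^{\theta})\cap\Omega^{\ell,k}$ and $\mathrm{im}(d_{\mathcal{F}}^{\omega})$ into $\mathrm{im}(d_{\mathcal{F}}^{\theta})$, so it is well defined on cohomology and respects the bigrading.

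Finally I would exhibit the inverse, which is $\Psi([\beta])=[e^{g}\beta]$: the same intertwining identity with the roles of $\omega$ and $\theta$ reversed (using $\omega=\theta+d_{\mathcal{F}}(-g)$) shows $\Psi$ is a well-defined chain map, and $\Psi\circ\Phi$ and $\Phi\circ\Psi$ are both the identity because $e^{g}e^{-g}=1$. This gives the claimed isomorphism $H_\omega^{\ell,k}(M,d_{\mathcal{F}})\cong H_\theta^{\ell,k}(M,d_{\mathcal{F}})$ for each $\ell,k$. I do not anticipate a genuine obstacle here; the only thing to be vigilant about is confirming that introducing the transverse degree $\ell$ does not produce an unexpected sign in the Leibniz rule for $d_{\mathcal{F}}$, and the observation that $e^{-g}$ has bidegree $(0,0)$ settles that at once.
```
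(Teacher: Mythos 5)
Your proposal is correct and follows exactly the route the paper intends: the paper's own proof is simply the one-line remark that the argument is ``similar to the proof of Proposition \ref{ta9},'' and your verification that the conjugation identity $\left(d_{\mathcal{F}}+\theta\wedge\right)\left(e^{-g}\alpha\right)=e^{-g}\left(d_{\mathcal{F}}+\omega\wedge\right)\alpha$ carries over unchanged because $e^{-g}$ has bidegree $(0,0)$ is precisely the point that makes that remark legitimate. Nothing further is needed.
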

\begin{proof}
Similar to the proof of Proposition \ref{ta9}.
\end{proof}

\begin{proposition}
\label{ta24}
(Homotopy axiom for the general leafwise Morse-Novikov cohomology). Let $f:\left(M,\mathcal{F}_{0}\right)\rightarrow \left(N,\mathcal{F}_{1}\right)$ and $g:\left(M,\mathcal{F}_{0}\right)\rightarrow \left(N,\mathcal{F}_{1}\right)$ be foliated homotopic maps, and $\omega\in\Omega^{0,1}\left(N,\mathcal{F}_{1}\right)$ be a leafwise closed $1-\text{form}$ on $\left(N,\mathcal{F}_{1}\right)$. Then there exists a positive function $h:\left(M,\mathcal{F}_{0}\right)\rightarrow\mathbb{R}$ such that, for all $\ell,k$ $$f^{*}=h g^{*}:H^{\ell,k}_{\omega}\left(N,d_{\mathcal{F}_{1}}\right)\rightarrow H^{\ell,k}_{f^{*}\omega}\left(M,d_{\mathcal{F}_{0}}\right).$$
\end{proposition}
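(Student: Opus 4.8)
The plan is to reduce Proposition \ref{ta24} to exactly the same mechanism that proved its degree-$k$ analogue in Proposition \ref{ta13}, now working in the full bigraded complex $\Omega^{*,*}(M,\mathcal{F})$ instead of the leafwise complex $\Omega^{0,*}(M,\mathcal{F})$. The essential point is that the twisted differential $d_{\mathcal{F}}^{\omega}=d_{\mathcal{F}}+\omega\wedge$ in the general setting raises only the leafwise degree $v$ (it maps $\Omega^{u,v}\to\Omega^{u,v+1}$), so the transverse degree $u=\ell$ is a passive spectator index throughout. All the homotopy machinery therefore operates one transverse-degree-slice at a time, and the argument of Proposition \ref{ta13} applies verbatim.

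\emph{First} I would invoke the leafwise de~Rham homotopy axiom (Proposition \ref{ta4}) to conclude that since $f$ and $g$ are foliated homotopic, the pullbacks $f^{*}\omega$ and $g^{*}\omega$ represent the same class in $H^{1}(M,d_{\mathcal{F}_{0}})$; hence there is a smooth function $\nu:M\to\mathbb{R}$ with $g^{*}\omega-f^{*}\omega=d_{\mathcal{F}_{0}}\nu$, and I set $h=e^{\nu}$, which is positive. \emph{Second} I would record the key intertwining computation, which is the bigraded version of the algebra in Proposition \ref{ta9}: for any $\alpha\in\Omega^{\ell,k}(N,\mathcal{F}_{1})$ one has the identity
\begin{equation*}
\left(d_{\mathcal{F}_{0}}+f^{*}\omega\wedge\right)\left(e^{\nu}\,g^{*}\alpha\right)
= e^{\nu}\,g^{*}\!\left(\left(d_{\mathcal{F}_{1}}+\omega\wedge\right)\alpha\right),
\end{equation*}
using that $g^{*}$ commutes with $d_{\mathcal{F}}$ and with wedging (so $g^{*}(\omega\wedge\alpha)=g^{*}\omega\wedge g^{*}\alpha$) and that $d_{\mathcal{F}_{0}}\nu=g^{*}\omega-f^{*}\omega$. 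This identity shows simultaneously that $e^{\nu}g^{*}$ sends $d_{\mathcal{F}_{1}}^{\omega}$-closed forms to $d_{\mathcal{F}_{0}}^{f^{*}\omega}$-closed forms and $d_{\mathcal{F}_{1}}^{\omega}$-exact forms to $d_{\mathcal{F}_{0}}^{f^{*}\omega}$-exact forms, hence descends to cohomology in each bidegree $(\ell,k)$.

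\emph{Third}, for the closed form $\alpha$ I would compare $f^{*}\alpha$ with $h\,g^{*}\alpha$ exactly as in Proposition \ref{ta13}: the leafwise-de~Rham homotopy between $f$ and $g$ provides a cochain homotopy $K$ with $f^{*}-g^{*}=d_{\mathcal{F}_{0}}K+Kd_{\mathcal{F}_{1}}$, and combining this with the multiplication-by-$h$ correction that absorbs the discrepancy $d_{\mathcal{F}_{0}}\nu$ yields $[f^{*}\alpha]=[h\,g^{*}\alpha]$ in $H^{\ell,k}_{f^{*}\omega}(M,d_{\mathcal{F}_{0}})$, for every $\ell$ and $k$. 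Since none of these steps touches or alters the transverse degree $\ell$, they hold uniformly across all $\ell$, giving the stated equality $f^{*}=hg^{*}$.

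The one point that deserves genuine care — and which I expect to be the main obstacle — is the \emph{well-definedness of the cochain homotopy $K$ on the general bigraded complex}. Proposition \ref{ta4} supplies a homotopy operator adapted to the leafwise complex $\Omega^{0,*}$; one must check that the operator built from the foliated homotopy $F$ (via fibre integration over $[0,1]$, i.e.\ the standard $\int_{0}^{1}\iota_{\partial_{t}}F^{*}$ construction) genuinely respects the bigrading, mapping $\Omega^{\ell,k}(N,\mathcal{F}_{1})$ into $\Omega^{\ell,k-1}(M,\mathcal{F}_{0})$ and lowering $v$ by one while preserving $u=\ell$. This requires that the leafwise homotopy $F$, being leaf-preserving with $F_{t_{1}}(x),F_{t_{2}}(x)$ in a common leaf, produces a $\partial_{t}$-contraction that lives in the tangential directions, so that no transverse degree is created or destroyed. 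Once this bigraded compatibility of $K$ is established, the remainder is the routine bookkeeping of the two preceding propositions.
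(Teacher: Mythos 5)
Your proposal is correct and follows essentially the same route as the paper, whose proof of Proposition \ref{ta24} consists of the single remark that the argument of Proposition \ref{ta13} applies unchanged---precisely because $d_{\mathcal{F}}^{\omega}=d_{\mathcal{F}}+\omega\wedge$ only raises the leafwise degree $v$ and leaves the transverse degree $\ell$ as a spectator, which is the observation you make explicit. Your additional care about the chain homotopy respecting the bigrading is more detail than the paper supplies, but it is the same mechanism, not a different proof.
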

\begin{proof}
Similar to the proof of Proposition \ref{ta13}.
\end{proof}

\section{Leafwise Morse-Novikov Hodge Theory of forms of general $\left(u,v\right)$ type}
\label{lastsection}
Let $P=d_{\mathcal{F}}+\omega\wedge$, then its formal adjoint $P^{*}$ is $\delta_{\mathcal{F}}-\omega\lrcorner$. Let $D=P+P^{*}$ be the corresponding Dirac operator. Then the corresponding Laplacian $\Delta_{\mathcal{F},\omega}^{u,v}=\left(P+P^{*}\right)^{2}=PP^{*}+P^{*}P$ is a nonnegative, self-adjoint second order differential operator on the smooth sections on $\Omega^{u,v}\left(M,\mathcal{F}\right)$. For each integer $k\geq 0$, let $H_{k}\left(\Delta_{\mathcal{F},\omega}^{u,v}\right)$ be the Hilbert space completion of the space $\Omega^{u,v}\left(M,\mathcal{F}\right)$ with respect to the scalar product $$\left\langle\alpha,\beta\right\rangle=\sum_{j=0}^{j=k}{\left\langle\left(\Delta_{\mathcal{F},\omega}^{u,v}\right)^{j}\alpha,\beta\right\rangle}$$ for $\alpha,\beta\in\Omega^{u,v}\left(M,\mathcal{F}\right)$. For the corresponding norm $\left\|.\right\|_{k}$, we have 
$$k\leq k^{\prime}\Rightarrow\left\|\alpha\right\|_{k}\leq\left\|\alpha\right\|_{k^{\prime}}\text{   for all   }\alpha\in\Omega^{u,v}\left(M,\mathcal{F}\right).$$
Thus we obtain the chain of continuous inclusions 
$$H=H_{0}\left(\Delta_{\mathcal{F},\omega}^{u,v}\right)\supset H_{1}\left(\Delta_{\mathcal{F},\omega}^{u,v}\right)\supset H_{2}\left(\Delta_{\mathcal{F},\omega}^{u,v}\right)\supset\cdots\supset H_{\infty}\left(\Delta_{\mathcal{F},\omega}^{u,v}\right),$$  where $$H_{\infty}\left(\Delta_{\mathcal{F},\omega}^{u,v}\right)=\bigcap_{k\geq 0}H_{k}\left(\Delta_{\mathcal{F},\omega}^{u,v}\right)$$ equipped with the Frech{\'e}t topology.

\begin{theorem}
\label{ta25}
Let $\left(M,\mathcal{F}\right)$ be a smooth foliation of a closed Riemannian manifold with a bundle like metric and $\omega\in\Omega^{0,1}\left(M,\mathcal{F}\right)$. The Laplacian $\Delta_{\mathcal{F},\omega}^{u,v}$ on $\Omega^{u,v}\left(M,\mathcal{F}\right)$ gives rise to an orthogonal direct sum decomposition 

$$H_{\infty}\left(\Delta_{\mathcal{F},\omega}^{u,v}\right)\cong\ker\overline{\Delta_{\mathcal{F},\omega,\infty}^{u,v}}\oplus\overline{\mathrm{im}\overline{\Delta_{\mathcal{F},\omega,\infty}^{u,v}}}\cong\ker\overline{\Delta_{\mathcal{F},\omega,\infty}^{u,v}}\oplus\overline{\mathrm{im}\overline{P_{\infty}}}\oplus\overline{\mathrm{im}\overline{P^{*}_{\infty}}},$$

 where $\overline{\Delta_{\mathcal{F},\omega,\infty}^{u,v}}$, $\overline{P_{\infty}}$, and $\overline{P^{*}_{\infty}}$ are canonical continuous extensions of the corresponding differential operators.
\end{theorem}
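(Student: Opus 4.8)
The plan is to recognize $\left(\Omega^{u,*}\left(M,\mathcal{F}\right),P\right)$, for each fixed transverse degree $u$, as a leafwise elliptic Hilbert complex and to run the standard spectral Hodge decomposition for such a complex, with the Riemannian hypothesis supplying the regularity needed to descend from $L^2$ to the Fr\'echet space $H_{\infty}$. First I would record the algebraic facts underpinning the complex: $P=d_{\mathcal{F}}+\omega\wedge$ satisfies $P^{2}=0$, its formal adjoint is $P^{*}=\delta_{\mathcal{F}}-\omega\lrcorner$ (so $\left(P^{*}\right)^{2}=0$), and $\Delta_{\mathcal{F},\omega}^{u,v}=PP^{*}+P^{*}P$ is non-negative and formally self-adjoint. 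Since $\omega$ is a smooth leafwise form and $d_{\mathcal{F}}$ differentiates only along the leaves, $P$, $P^{*}$ and $\Delta_{\mathcal{F},\omega}^{u,v}$ are leafwise operators that preserve $u$; restricted to the leaves the Laplacian is elliptic, exactly as in Corollary~\ref{ta20}.

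Next I would pass to the Hilbert space $H=H_{0}\left(\Delta_{\mathcal{F},\omega}^{u,v}\right)=L^{2}\Omega^{u,v}\left(M,\mathcal{F}\right)$. Because $\left(M,\mathcal{F}\right)$ is a Riemannian foliation with a bundle-like metric, the \'Alvarez L\'opez--Kordyukov spectral theory (the machinery behind Proposition~\ref{ta7} and Theorem~\ref{ta8}, see \cite{lopez2001long}) guarantees that the closure $\overline{\Delta_{\mathcal{F},\omega,\infty}^{u,v}}$ is self-adjoint and non-negative on $H$. The spectral theorem then yields the orthogonal splitting $H=\ker\overline{\Delta_{\mathcal{F},\omega,\infty}^{u,v}}\oplus\overline{\mathrm{im}\,\overline{\Delta_{\mathcal{F},\omega,\infty}^{u,v}}}$. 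To refine the second summand I would use the relations above: $\mathrm{im}\,\overline{P_{\infty}}\perp\mathrm{im}\,\overline{P^{*}_{\infty}}$, because $\left\langle P\alpha,P^{*}\beta\right\rangle=\left\langle P^{2}\alpha,\beta\right\rangle=0$ on smooth forms and this persists under closure, while $\mathrm{im}\,\overline{\Delta_{\mathcal{F},\omega,\infty}^{u,v}}\subseteq\overline{\mathrm{im}\,\overline{P_{\infty}}}+\overline{\mathrm{im}\,\overline{P^{*}_{\infty}}}$ follows from $\Delta=PP^{*}+P^{*}P$. Combined with $\ker\overline{\Delta_{\mathcal{F},\omega,\infty}^{u,v}}=\ker\overline{P_{\infty}}\cap\ker\overline{P^{*}_{\infty}}$, this produces the threefold orthogonal decomposition at the $L^{2}$ level.

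The remaining and essential step is to show that this decomposition restricts to the Fr\'echet space $H_{\infty}=\bigcap_{k\geq 0}H_{k}\left(\Delta_{\mathcal{F},\omega}^{u,v}\right)$. Here I would express the three orthogonal projections through the functional calculus of $\overline{\Delta_{\mathcal{F},\omega,\infty}^{u,v}}$ --- the projection onto the harmonic part being the spectral projection for the eigenvalue $0$ --- and invoke the fact that the leafwise heat semigroup $e^{-t\overline{\Delta_{\mathcal{F},\omega,\infty}^{u,v}}}$ is leafwise smoothing and preserves each graph-norm space $H_{k}$, hence preserves $H_{\infty}$. It follows that the spectral projections are continuous on $H_{\infty}$, so each summand meets $H_{\infty}$ in a closed subspace and the orthogonal splitting descends, yielding the stated isomorphism with the closures of the images of the continuous extensions $\overline{\Delta_{\mathcal{F},\omega,\infty}^{u,v}}$, $\overline{P_{\infty}}$ and $\overline{P^{*}_{\infty}}$.

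I expect the descent to $H_{\infty}$ to be the main obstacle. The operator $\Delta_{\mathcal{F},\omega}^{u,v}$ is only leafwise elliptic and not elliptic on $M$, so there is no transverse regularity gain, the image of the Laplacian need not be closed (which is why every summand is written with a closure), and standard compact-manifold Hodge theory does not apply directly. The resolution rests entirely on the Riemannian hypothesis through the long-time analysis of the leafwise heat flow of \'Alvarez L\'opez and Kordyukov, which furnishes the continuity of the harmonic projection in the Fr\'echet topology and the smoothness of harmonic representatives; since twisting $d_{\mathcal{F}}$ by the smooth leafwise closed form $\omega$ leaves the complex leafwise elliptic and formally self-adjoint, that machinery transfers \emph{verbatim} to $P=d_{\mathcal{F}}+\omega\wedge$. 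I would therefore adapt, rather than reprove, the regularity statements already used in Corollary~\ref{ta20}.
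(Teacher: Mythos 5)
Your proposal follows essentially the same route as the paper's proof: establish self-adjointness of the closed extension, apply the abstract spectral decomposition on the scale of spaces $H_{k}\left(\Delta_{\mathcal{F},\omega}^{u,v}\right)$, split the image term using the $\left\langle ,\right\rangle_{k}$-orthogonality of $\mathrm{im}\,P$ and $\mathrm{im}\,P^{*}$, and descend to $H_{\infty}$ via the functional calculus. The one input you leave implicit is the source of essential self-adjointness: the paper obtains it by applying Chernoff's finite-propagation-speed theorem to the Dirac operator $D=P+P^{*}$ and then invokes the machinery of Section 2 of \cite{lopez1991hodge}, which is precisely the descent argument you sketch.
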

\begin{proof}
The complexification of the Dirac operator $D=P+P^{*}$ satisfies the hypothesis of Chernoff's Lemma 2.1 in \cite{chernoff1973essential}. This can be verified from Corollary 1.4 of \cite{chernoff1973essential}. Then with the ideas explained in Section 2 of \cite{lopez1991hodge}, we have the real Hilbert spaces $H_{k}\left(\Delta_{\mathcal{F},\omega}^{u,v}\right)$ and $H_{\infty}\left(\Delta_{\mathcal{F},\omega}^{u,v}\right)$. We can extend the operator $D$ to $\overline{D_{\infty}}$ and $\Delta_{\mathcal{F},\infty}^{\omega}$ to

$$\overline{\Delta_{\mathcal{F},\omega,\infty}^{u,v}}: H_{\infty}\left(\Delta_{\mathcal{F},\omega}^{u,v}\right)\rightarrow H_{\infty}\left(\Delta_{\mathcal{F},\omega}^{u,v}\right),$$
yielding the orthogonal decompositions 

$$H_{\infty}\left(\Delta_{\mathcal{F},\omega}^{u,v}\right)\cong\ker\overline{\Delta_{\mathcal{F},\omega,\infty}^{u,v}}\oplus\overline{\mathrm{im}\overline{\Delta_{\mathcal{F},\omega,\infty}^{u,v}}}\cong\ker\overline{D_{\infty}}\oplus\overline{\mathrm{im}\overline{D_{\infty}}}.$$

Notice the spaces $\Omega^{p,q}\left(M,\mathcal{F}\right)$ are orthogonal to each other with respect to the inner product $\left\langle,\right\rangle_{k}$ defined above, for eack $k\geq 0$. Therefore, it follows that $\overline{D_{\infty}}$ can be decomposed as the sum of the continuous operators $$\overline{P_{\infty}},\overline{P_{\infty}^{*}}:H_{\infty}\rightarrow H_{\infty},$$ which are extensions of $P_{\infty}$ and $P_{\infty}^{*}$ respectively. Since $\mathrm{im} P$ and $\mathrm{im} P^{*}$ are $\left\langle,\right\rangle_{k}$-orthogonal for each $k\geq 0$, we obtain the following orthogonal decomposition: $$H_{\infty}\left(\Delta_{\mathcal{F},\omega}^{u,v}\right)\cong\ker\overline{\Delta_{\mathcal{F},\omega,\infty}^{u,v}}\oplus\overline{\mathrm{im}\overline{P_{\infty}}}\oplus\overline{\mathrm{im}\overline{P_{\infty}^{*}}}.$$
\end{proof}

\begin{corollary}
\label{ta26}
Every reduced general leafwise Morse-Novikov cohomology class has a $\Delta_{\mathcal{F},\omega}^{u,v}$-harmonic representative.
\end{corollary}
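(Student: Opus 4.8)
The plan is to deduce Corollary \ref{ta26} as a direct consequence of the Hodge decomposition established in Theorem \ref{ta25}, in exactly the way one extracts the classical Hodge theorem from the orthogonal splitting of $L^2$-forms. First I would record that reduced cohomology is computed on the smooth (Fréchet) level: by definition $\bar{H}^{u,v}_\omega(M,d_{\mathcal{F}}) = \ker(d^{\omega}_{\mathcal{F}})/\overline{\mathrm{im}(d^{\omega}_{\mathcal{F}})}$, where the closure is taken in $\Omega^{u,v}(M,\mathcal{F}) = H_\infty(\Delta_{\mathcal{F},\omega}^{u,v})$. The three-term decomposition of Theorem \ref{ta25} gives $H_\infty \cong \ker\overline{\Delta^{u,v}_{\mathcal{F},\omega,\infty}} \oplus \overline{\mathrm{im}\,\overline{P_\infty}} \oplus \overline{\mathrm{im}\,\overline{P^{*}_\infty}}$, with $P = d_{\mathcal{F}} + \omega\wedge$. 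The key algebraic observation is that the harmonic space $\mathcal{H}^{u,v}_\omega := \ker\overline{\Delta^{u,v}_{\mathcal{F},\omega,\infty}}$ coincides with $\ker\overline{P_\infty} \cap \ker\overline{P^{*}_\infty}$, since $\Delta = PP^{*} + P^{*}P$ is nonnegative and self-adjoint, so $\Delta\alpha = 0$ forces $\langle P\alpha,P\alpha\rangle + \langle P^{*}\alpha,P^{*}\alpha\rangle = 0$ and hence $P\alpha = P^{*}\alpha = 0$.

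Next I would identify the cocycle space with the harmonic part modulo the exact part. Writing an arbitrary $\alpha \in \ker(d^{\omega}_{\mathcal{F}}) = \ker P$ according to the decomposition, the component in $\overline{\mathrm{im}\,\overline{P^{*}_\infty}}$ must vanish: if $\alpha = \mathcal{h} + P\mu + P^{*}\nu$ then applying $P$ and using $P^2 = 0$ together with the orthogonality of the three summands isolates $P^{*}\nu$ in the closure of $\mathrm{im}\,P^{*}$, which is orthogonal to $\ker P$, forcing that term to be zero. Thus every cocycle splits as $\alpha = \mathcal{h} + P\mu$ with $\mathcal{h} \in \mathcal{H}^{u,v}_\omega$ and $P\mu \in \overline{\mathrm{im}\,\overline{P_\infty}} = \overline{\mathrm{im}(d^{\omega}_{\mathcal{F}})}$. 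This exhibits the harmonic representative: the class $[\alpha]$ contains the harmonic form $\mathcal{h}$.

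I would then verify the representative is unique, giving the isomorphism $\mathcal{H}^{u,v}_\omega \cong \bar{H}^{u,v}_\omega(M,d_{\mathcal{F}})$ rather than just surjectivity of the quotient map. Uniqueness follows because $\mathcal{H}^{u,v}_\omega$ is orthogonal to $\overline{\mathrm{im}(d^{\omega}_{\mathcal{F}})} = \overline{\mathrm{im}\,\overline{P_\infty}}$ by the direct-sum structure of Theorem \ref{ta25}, so a harmonic form that is also a limit of coboundaries must be zero. Combining surjectivity and injectivity of the natural map $\mathcal{H}^{u,v}_\omega \to \bar{H}^{u,v}_\omega(M,d_{\mathcal{F}})$, $\mathcal{h} \mapsto [\mathcal{h}]$, yields the stated claim that every reduced class has a harmonic representative.

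The main obstacle is ensuring that the formal, finite-order manipulations $P^2 = 0$, self-adjointness of $P^{*}$, and the orthogonality of $\ker P$ to $\overline{\mathrm{im}\,P^{*}}$ remain valid for the canonical continuous extensions $\overline{P_\infty}$, $\overline{P^{*}_\infty}$ on the Fréchet space $H_\infty$, rather than only for the operators acting on smooth forms. This is precisely the content secured by the Chernoff–type essential self-adjointness invoked in the proof of Theorem \ref{ta25} and the construction of the extensions in the sense of \cite{lopez1991hodge}; since $H_\infty \subset \Omega^{u,v}(M,\mathcal{F})$ consists of smooth forms by elliptic regularity along the leaves, the extended operators agree with the genuine differential operators there, so the algebraic identities transfer without loss. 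Once this compatibility is noted, the remainder of the argument is the standard Hodge-theoretic bookkeeping carried out above.
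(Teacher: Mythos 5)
Your proposal is correct and follows exactly the route the paper intends: the paper states Corollary \ref{ta26} with no written proof, treating it as an immediate consequence of the orthogonal decomposition in Theorem \ref{ta25}, and your argument (identifying $\ker\overline{\Delta^{u,v}_{\mathcal{F},\omega,\infty}}$ with $\ker\overline{P_\infty}\cap\ker\overline{P^{*}_\infty}$, killing the $\overline{\mathrm{im}\,\overline{P^{*}_\infty}}$ component of a cocycle by orthogonality to $\ker P$, and checking uniqueness) is precisely the standard bookkeeping being left to the reader. Your closing remark about why the algebraic identities survive passage to the continuous extensions on $H_\infty$ is a useful point the paper glosses over, but it does not change the approach.
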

For any $\alpha\wedge\beta\in\Omega^{u,v}\left(M,\mathcal{F}\right)$, from formula 17 in \cite{lopez1991hodge} we have $\delta_{\mathcal{F}}=(-1)^{n(u+v)+n+1}*d_{\mathcal{F}}*$. By using the identities 
\begin{eqnarray*} 
\omega\lrcorner=(-1)^{n(u+v)+n}*\omega\wedge*\\
\text{ and }    *^{2}=(-1)^{(u+v)(n+1)},
\end{eqnarray*}
it can be shown that
\begin{eqnarray*}
\left(\omega\lrcorner\right)*=\left(-1\right)^{u+v}*\left(\omega\wedge\right).\\
*\left(\omega\lrcorner\right)=\left(-1\right)^{u+v+1}\left(\omega\wedge\right)*.\\
*\delta_{\mathcal{F}}=\left(-1\right)^{u+v}d_{\mathcal{F}}*.\\
\delta_{\mathcal{F}}*=\left(-1\right)^{u+v+1}*d_{\mathcal{F}}.
\end{eqnarray*}

\begin{proposition}
\label{ta27}
Let $\left(M,\mathcal{F}\right)$, and $\omega$ be as in Theorem \ref{ta25}. The Hodge operator $*:\Omega^{u,v}\left(M,\mathcal{F}\right)\rightarrow\Omega^{q-u,p-v}\left(M,\mathcal{F}\right)$ satisfies $$*\Delta_{\mathcal{F},\omega}^{u,v}=\Delta_{\mathcal{F},-\omega}^{q-u,p-v}*.$$
\end{proposition}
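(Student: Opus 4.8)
The plan is to mimic the proof of Proposition \ref{ta19} exactly, which is the $u=0$ special case, but now carrying the transversal degree $u$ through the sign bookkeeping. The target identity $*\Delta_{\mathcal{F},\omega}^{u,v}=\Delta_{\mathcal{F},-\omega}^{q-u,p-v}*$ will follow purely formally from the four commutation relations displayed just before the statement, namely $(\omega\lrcorner)*=(-1)^{u+v}*(\omega\wedge)$, $*(\omega\lrcorner)=(-1)^{u+v+1}(\omega\wedge)*$, $*\delta_{\mathcal{F}}=(-1)^{u+v}d_{\mathcal{F}}*$, and $\delta_{\mathcal{F}}*=(-1)^{u+v+1}*d_{\mathcal{F}}$. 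These are the general $(u,v)$-analogues of the relations used in Proposition \ref{ta19}; the point is that $*$ conjugates the four building blocks $d_{\mathcal{F}}$, $\delta_{\mathcal{F}}$, $\omega\wedge$, $\omega\lrcorner$ into the building blocks of the $-\omega$ Laplacian, up to signs that will cancel in pairs.

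First I would recall that $P=d_{\mathcal{F}}+\omega\wedge$ has formal adjoint $P^{*}=\delta_{\mathcal{F}}-\omega\lrcorner$, so $\Delta_{\mathcal{F},\omega}^{u,v}=PP^{*}+P^{*}P=(d_{\mathcal{F}}+\omega\wedge)(\delta_{\mathcal{F}}-\omega\lrcorner)+(\delta_{\mathcal{F}}-\omega\lrcorner)(d_{\mathcal{F}}+\omega\wedge)$, and the $-\omega$ Laplacian is obtained by replacing $\omega\wedge$ with $-\omega\wedge$ and $\omega\lrcorner$ with $-\omega\lrcorner$. Next I would apply $*$ on the left of $\Delta_{\mathcal{F},\omega}^{u,v}\beta$ for a general section $\beta\in\Omega^{u,v}(M,\mathcal{F})$ and push the $*$ rightward through each factor, one operator at a time, using the four commutation relations. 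Crucially, after passing $*$ across the first operator the transversal/tangential bidegree changes (e.g.\ $d_{\mathcal{F}}$ and $\omega\wedge$ raise $v$ by one, $\delta_{\mathcal{F}}$ and $\omega\lrcorner$ lower it), so the exponent $(-1)^{u+v}$ must be recomputed at each intermediate degree; keeping careful track of these shifting exponents is the only real work. As in Proposition \ref{ta19}, the two sign factors picked up when commuting $*$ past a product of two operators will multiply to $+1$ on each of the four monomials, and each $\omega\wedge$, $\omega\lrcorner$ acquires exactly one minus sign, converting $\Delta_{\mathcal{F},\omega}^{u,v}$ into $\Delta_{\mathcal{F},-\omega}^{q-u,p-v}$ acting on $*\beta\in\Omega^{q-u,p-v}(M,\mathcal{F})$.

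The main obstacle is bookkeeping rather than conceptual: I must verify that the sign exponents in the four commutation relations are genuinely the ones that make the two signs cancel on every term of $PP^{*}+P^{*}P$, given that $*$ is applied to forms of fixed bidegree $(u,v)$ while the relations as written have $(-1)^{u+v}$ evaluated on whatever bidegree the operator currently sees. I would therefore check one representative monomial in full, say $*\,d_{\mathcal{F}}(\omega\lrcorner)\beta$, to confirm the exponents align, and then note that the remaining monomials follow by the identical mechanism, exactly parallel to the computation in Proposition \ref{ta19}. A secondary point worth a sentence is that $*$ is a pointwise fibrewise isometric isomorphism $\Omega^{u,v}\to\Omega^{q-u,p-v}$ by Lemma \ref{ta15}, so the identity on operators immediately yields that $*$ carries $\Delta_{\mathcal{F},\omega}^{u,v}$-harmonic forms to $\Delta_{\mathcal{F},-\omega}^{q-u,p-v}$-harmonic forms, which (combined with Corollary \ref{ta26}) is the Poincar\'e-duality payoff this proposition is meant to supply.
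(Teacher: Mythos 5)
Your plan is exactly the paper's argument: the paper's proof of Proposition \ref{ta27} consists of the single line ``Similar to Proposition \ref{ta19}, using the formulas above,'' i.e.\ it pushes $*$ through $PP^{*}+P^{*}P$ using the four displayed commutation relations with $k$ replaced by $u+v$, just as you propose. Your write-up is in fact more careful than the paper's, since you flag the one genuine subtlety (the exponent $(-1)^{u+v}$ must be re-evaluated at each intermediate bidegree so that the two signs on each monomial cancel), which is precisely what makes the computation of Proposition \ref{ta19} go through verbatim here.
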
 
\begin{proof}
Similar to Proposition \ref{ta19}, using the formulas above.
Thus the operator $*$ maps $\Delta_{\mathcal{F},\omega}^{u,v}$-harmonic forms to $\Delta_{\mathcal{F},-\omega}^{q-u,p-v}$-harmonic forms.
\end{proof}

\begin{lemma}
\label{ta28}
$\omega\lrcorner=(-1)^{p\left(v+1\right)}*_{\mathcal{F}}\left(\omega\wedge\right)*_{\mathcal{F}}$ for all $\omega\in\Omega^{0,1}\left(M,\mathcal{F}\right)$ on $\Omega^{u,v}\left(M,\mathcal{F}\right)$.
\end{lemma}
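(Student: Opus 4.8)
The plan is to reduce the general $(u,v)$ identity to the purely leafwise identity already proved in Lemma \ref{18}, exploiting that all three operators involved — the leafwise Hodge star $*_{\mathcal{F}}$, exterior multiplication $\omega\wedge$, and contraction $\omega\lrcorner$ by a leafwise $1$-form — act only on the leafwise tensor factor $\Lambda^{v}T^{*}\mathcal{F}$ and leave the transversal factor $\Lambda^{u}N^{*}\mathcal{F}$ untouched apart from a Koszul reordering sign. The heart of the matter is to check that these transversal signs cancel, so that the identity on $\Omega^{u,v}\left(M,\mathcal{F}\right)$ collapses to the one on $\Omega^{0,v}\left(M,\mathcal{F}\right)$ with $k$ replaced by $v$.

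First I would work locally and write a general section of $\Omega^{u,v}\left(M,\mathcal{F}\right)$ as $\alpha\wedge\beta$, where $\alpha\in\Gamma\Lambda^{u}N^{*}\mathcal{F}$ is transversal of degree $u$ and $\beta\in\Gamma\Lambda^{v}T^{*}\mathcal{F}$ is leafwise of degree $v$. I would then record how each operator acts on such a split form. Since $*_{\mathcal{F}}$ acts purely on the leafwise factor, $*_{\mathcal{F}}\left(\alpha\wedge\beta\right)=\alpha\wedge\left(*_{\mathcal{F}}\beta\right)$ with no transversal sign. For the wedge, because $\omega$ is a leafwise $1$-form, passing it across the degree-$u$ transversal factor $\alpha$ gives $\omega\wedge\left(\alpha\wedge\beta\right)=(-1)^{u}\,\alpha\wedge\left(\omega\wedge\beta\right)$. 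For the contraction, the dual vector $\omega^{\sharp}$ is leafwise, hence annihilates every conormal covector and so kills $\alpha$; by the derivation property of interior multiplication this yields $\omega\lrcorner\left(\alpha\wedge\beta\right)=(-1)^{u}\,\alpha\wedge\left(\omega\lrcorner\beta\right)$, the \emph{same} transversal sign as the wedge.

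Next I would substitute these into the right-hand side $(-1)^{p(v+1)}*_{\mathcal{F}}\left(\omega\wedge\right)*_{\mathcal{F}}$ applied to $\alpha\wedge\beta$. Applying $*_{\mathcal{F}}$, then $\omega\wedge$, then $*_{\mathcal{F}}$ produces exactly one factor of $(-1)^{u}$ (from the single wedge step), and this matches the $(-1)^{u}$ on the left-hand side $\omega\lrcorner\left(\alpha\wedge\beta\right)$. After cancelling, the identity to be proved reduces on the leafwise factor to $\omega\lrcorner\beta=(-1)^{p(v+1)}*_{\mathcal{F}}\left(\omega\wedge *_{\mathcal{F}}\beta\right)$, which is precisely Lemma \ref{18} applied to the leafwise $v$-form $\beta$ (taking $k=v$, so that $(-1)^{pk+p}=(-1)^{p(v+1)}$). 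Since the identity holds on a spanning set of split forms, it extends by linearity to all of $\Omega^{u,v}\left(M,\mathcal{F}\right)$.

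The only real obstacle is the sign bookkeeping: I must verify that the transversal reordering sign is the same $(-1)^{u}$ for both $\omega\wedge$ and $\omega\lrcorner$, and that $*_{\mathcal{F}}$ contributes none, so that all dependence on the transversal degree $u$ drops out. Once this cancellation is confirmed, no new analysis is needed and the statement follows directly from the leafwise case.
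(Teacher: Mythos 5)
Your proof is correct, but it takes a genuinely different route from the paper. The paper's proof of Lemma \ref{ta28} starts from the ambient formula $\omega\lrcorner=(-1)^{n(u+v+1)}*\left(\omega\wedge\right)*$ on the $n$-manifold, decomposes the full Hodge star as $*=(-1)^{(q-u)v}*_{\bot}\otimes*_{\mathcal{F}}$ via Lemma \ref{ta15}, and then grinds through the resulting exponents in $n$, $q$, $u$, $v$ until everything collapses to $(-1)^{p(v+1)}$. You instead bypass the ambient star entirely: you observe that $*_{\mathcal{F}}$ acts as $\mathrm{id}\otimes*_{\mathcal{F}}$ on the splitting $\Lambda^{u}N^{*}\mathcal{F}\wedge\Lambda^{v}T^{*}\mathcal{F}$, that $\omega\wedge$ picks up $(-1)^{u}$ when passed across the transversal factor, and that $\omega\lrcorner$ picks up the \emph{same} $(-1)^{u}$ because $\omega^{\sharp}$ is tangent to the leaves (a covector killing $N\mathcal{F}$ has metric dual in $T\mathcal{F}$, so it annihilates the conormal factor and the antiderivation rule leaves only the $(-1)^{u}\alpha\wedge(\omega\lrcorner\beta)$ term). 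The transversal signs cancel and the identity reduces to Lemma \ref{18} with $k=v$, whose sign $(-1)^{pk+p}=(-1)^{p(v+1)}$ matches. Your argument is shorter, makes the structural reason for the sign transparent (the transversal degree $u$ genuinely plays no role), and reuses Lemma \ref{18} rather than recomputing it; its only mild dependencies are the convention $*_{\mathcal{F}}(\alpha\wedge\beta)=\alpha\wedge*_{\mathcal{F}}\beta$ (which is the one the paper uses throughout Section \ref{lastsection}) and the identification of $\omega\lrcorner$ with interior multiplication by $\omega^{\sharp}$, i.e.\ the pointwise adjoint of $\omega\wedge$ — both consistent with the paper. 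The paper's computation, by contrast, is self-contained in the sense that it derives the result directly from the ambient adjoint formula without invoking the $u=0$ case.
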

\begin{proof}
For any $\alpha\wedge\beta\in\Omega^{u,v}\left(M,\mathcal{F}\right)$, we have
\begin{eqnarray*}
\omega\lrcorner\left(\alpha\wedge\beta\right)&=&\left(-1\right)^{n\left(u+v+1\right)}*\left(\omega\wedge\right)*\left(\alpha\wedge\beta\right)\\
&=&\left(-1\right)^{n\left(u+v+1\right)}\left(-1\right)^{\left(q-u\right)v}\left(-1\right)^{\left(q-u\right)}*\left(*_{\bot}\alpha\wedge\omega\wedge*_{\mathcal{F}}\beta\right)\\
&=&\left(-1\right)^{n\left(u+v+1\right)}\left(-1\right)^{\left(q-u\right)v}\left(-1\right)^{\left(q-u\right)}\left(-1\right)^{\left(q-q+u\right)\left(p-v+1\right)} *_{\bot}^{2}\alpha\wedge *_{\mathcal{F}}\left(\omega\wedge*_{\mathcal{F}}\beta\right)\\
&=&\left(-1\right)^{n\left(u+v+1\right)}\left(-1\right)^{\left(q-u\right)v}\left(-1\right)^{\left(q-u\right)}\left(-1\right)^{\left(u\right)\left(p-v+1\right)} \left(-1\right)^{u\left(q+1\right)}\alpha\wedge *_{\mathcal{F}}\left(\omega\wedge*_{\mathcal{F}}\beta\right)\\
&=&\left(-1\right)^{n\left(u+v+1\right)}\left(-1\right)^{\left(q-u\right)v}\left(-1\right)^{\left(q-u\right)}\left(-1\right)^{\left(u\right)\left(p-v+1\right)} \left(-1\right)^{u\left(q+1\right)}\left(-1\right)^{u\left(v-1\right)}*_{\mathcal{F}}\left(\omega\wedge\right)*_{\mathcal{F}}\beta\wedge\alpha\\
&=&\left(-1\right)^{p\left(v+1\right)+uv}\left(-1\right)^{uv}*_{\mathcal{F}}\left(\omega\wedge\right)*_{\mathcal{F}}\alpha\wedge\beta\\
&=&\left(-1\right)^{p\left(v+1\right)}*_{\mathcal{F}}\left(\omega\wedge\right)*_{\mathcal{F}}\alpha\wedge\beta.
\end{eqnarray*}
\end{proof}

In a Riemannian foliation, any element of $\Omega^{u,v}(M,\mathcal{F})$, on a local foliated chart, can be expressed as a linear combination of forms of the type 
$\alpha\wedge\beta$, where $\alpha$ is a basic form in $\Omega^{u,0}(M,\mathcal{F})$ and $\beta$ is a form in $\Omega^{0,v}(M,\mathcal{F})$. Then in particular $d_{\mathcal{F}}\alpha =0$. See \cite[Lemma 3.4]{lopez2001long}.

\begin{lemma}
\label{ta29}
$d_{\mathcal{F}}^{*}=\left(-1\right)^{p\left(v+1\right)+1}*_{\mathcal{F}} d_{\mathcal{F}} *_{\mathcal{F}}$ for all on $\Omega^{u,v}\left(M,\mathcal{F}\right)$.
\end{lemma}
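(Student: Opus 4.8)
The plan is to mirror the computation used for Lemma~\ref{ta28} almost verbatim, replacing the leafwise wedge operator $\omega\wedge$ by the leafwise exterior derivative $d_{\mathcal{F}}$, and to take as starting point the global identity $\delta_{\mathcal{F}}=d_{\mathcal{F}}^{*}=(-1)^{n(u+v)+n+1}*d_{\mathcal{F}}*$ recorded just before Proposition~\ref{ta27} (here $*$ is the full Hodge star of $M$). Working on a foliated chart $\mathcal{U}$, I would use the decomposition recalled immediately before this lemma to reduce to an element of the form $\alpha\wedge\beta$ with $\alpha\in\Omega^{u,0}(M,\mathcal{F})$ basic and $\beta\in\Omega^{0,v}(M,\mathcal{F})$ leafwise; the whole point of this reduction is that $d_{\mathcal{F}}\alpha=0$.

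First I would expand each of the two full stars by Lemma~\ref{ta15}, writing $*(\alpha\wedge\beta)=(-1)^{(q-u)v}\,*_{\bot}\alpha\wedge *_{\mathcal{F}}\beta$. The essential observation is that $d_{\mathcal{F}}$ behaves exactly like $\omega\wedge$ with respect to the transverse/leafwise splitting: since $*_{\bot}\alpha$ is basic of transverse degree $q-u$, the Leibniz rule together with $d_{\mathcal{F}}(*_{\bot}\alpha)=0$ gives $d_{\mathcal{F}}(*_{\bot}\alpha\wedge *_{\mathcal{F}}\beta)=(-1)^{q-u}\,*_{\bot}\alpha\wedge d_{\mathcal{F}}*_{\mathcal{F}}\beta$, with precisely the Koszul sign $(-1)^{q-u}$ that $\omega\wedge$ produced in Lemma~\ref{ta28}. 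Applying Lemma~\ref{ta15} a second time to the resulting form (now of transverse degree $q-u$ and leafwise degree $p-v+1$) and simplifying with $*_{\bot}^{2}=(-1)^{u(q+1)}$, every intermediate factor coincides with one in the proof of Lemma~\ref{ta28}, except that the middle operator $\omega\wedge$ is replaced by $d_{\mathcal{F}}$, leaving the leafwise composite $*_{\mathcal{F}}d_{\mathcal{F}}*_{\mathcal{F}}$ acting on $\beta$. The only genuine difference between the two computations is the prefactor: $(-1)^{n(u+v+1)}$ for $\omega\lrcorner$ versus $(-1)^{n(u+v)+n+1}$ for $d_{\mathcal{F}}^{*}$, which differ by a single sign. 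Hence the final exponent is exactly that of Lemma~\ref{ta28} increased by $1$, namely $p(v+1)+1$.

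The step I expect to require the most care is the sign bookkeeping, and in particular verifying that the transverse degree $u$ drops out of the final exponent. As in Lemma~\ref{ta28}, the $u$-dependent contributions coming from $(-1)^{(q-u)v}$, $(-1)^{q-u}$, $(-1)^{u(p-v+1)}$, $(-1)^{u(q+1)}$, and from commuting $\alpha$ back past $\beta$ must cancel in pairs modulo $2$, so that the claimed exponent $p(v+1)+1$ is independent of $u$; this is the real content of the lemma, since $d_{\mathcal{F}}^{*}$ and $*_{\mathcal{F}}d_{\mathcal{F}}*_{\mathcal{F}}$ each individually carry $u$-dependent signs. As a useful consistency check, note that $p(v+1)+1$ is exactly the exponent of Lemma~\ref{ta17} evaluated at $k=v$, so the assertion amounts to saying that the leafwise codifferential formula on purely leafwise forms persists unchanged after wedging with a basic transverse factor $\alpha$. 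Finally, although $*_{\bot}$ and the auxiliary tangential and transverse orientations are defined only on $\mathcal{U}$, the identity involves only the globally defined operators $d_{\mathcal{F}}^{*}$, $d_{\mathcal{F}}$ and $*_{\mathcal{F}}$, and the local computation is independent of the chosen transverse orientation; therefore the identity patches to all of $\Omega^{u,v}(M,\mathcal{F})$.
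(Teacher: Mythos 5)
Your proposal follows the paper's own proof essentially verbatim: the paper likewise starts from $d_{\mathcal{F}}^{*}=(-1)^{n(u+v)+n+1}*d_{\mathcal{F}}*$, reduces to $\alpha\wedge\beta$ with $\alpha$ basic so that $d_{\mathcal{F}}$ passes over $*_{\bot}\alpha$ with the sign $(-1)^{q-u}$, expands both full stars via Lemma~\ref{ta15}, and cancels the $u$-dependent signs using $*_{\bot}^{2}=(-1)^{u(q+1)}$ to land on $(-1)^{p(v+1)+1}$. Your observation that the computation is Lemma~\ref{ta28} with $\omega\wedge$ replaced by $d_{\mathcal{F}}$ and the prefactor shifted by one sign is an accurate summary of what the paper actually does.
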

\begin{proof}
For any $\alpha\wedge\beta\in\Omega^{u,v}\left(M,\mathcal{F}\right)$, we have
\begin{eqnarray*}
d_{\mathcal{F}}^{*}\left(\alpha\wedge\beta\right) &=& \left(-1\right)^{n\left(u+v\right)+n+1}*d_{\mathcal{F}}*\left(\alpha\wedge\beta\right)\\
&=& \left(-1\right)^{n\left(u+v\right)+n+1+\left(q-u\right)\left(v+1\right)} * \left(*_{\perp}\alpha\wedge d_{\mathcal{F}} *_{\mathcal{F}}\beta\right)\\
&=& \left(-1\right)^{n\left(u+v\right)+n+1+\left(q-u\right)\left(v+1\right)+u\left(p-v+1\right)+u(q+1)}\left(\alpha\wedge\left(*_{\mathcal{F}} d_{\mathcal{F}} *_{\mathcal{F}}\beta\right)\right)\\
&=& \left(-1\right)^{p\left(v+1\right)+u+1}\left(\alpha\wedge\left(*_{\mathcal{F}} d_{\mathcal{F}} *_{\mathcal{F}}\beta\right)\right)\\
&=& \left(-1\right)^{p\left(v+1\right)+u+1+u\left(v-1\right)}\left(*_{\mathcal{F}} d_{\mathcal{F}} *_{\mathcal{F}}\beta\right)\wedge\alpha\\
&=& \left(-1\right)^{p\left(v+1\right)+1}\left(*_{\mathcal{F}} d_{\mathcal{F}} *_{\mathcal{F}}\right)\alpha\wedge\beta.
\end{eqnarray*}
\end{proof}
By Lemmas above and the identity $*_{\mathcal{F}}^{2}=\left(-1\right)^{v\left(p-v\right)}$, we have on $\Omega^{u,v}\left(M,\mathcal{F}\right)$
\begin{eqnarray*}
\left(\omega\lrcorner\right)*_{\mathcal{F}}&=&\left(-1\right)^{p(p-v+1)}\left(\omega\lrcorner\right)*_{\mathcal{F}}^{2}\\
&=&\left(-1\right)^{P(p-v+1)+v(p-v)}*_{\mathcal{F}}\left(\omega\wedge\right)\\
&=&\left(-1\right)^{v}*_{\mathcal{F}}\left(\omega\wedge\right), 
\end{eqnarray*}
and
\begin{eqnarray*}
*_{\mathcal{F}}\left(\omega\lrcorner\right) &=& \left(-1\right)^{p\left(v+1\right)}*_{\mathcal{F}}^{2}\left(\omega\wedge\right)*_{\mathcal{F}}\\
&=& \left(-1\right)^{p\left(v+1\right)}\left(-1\right)^{\left(p-v\right)\left(v+1\right)}\left(\omega\wedge\right)*_{\mathcal{F}}\\
&=& \left(-1\right)^{\left(v+1\right)}\left(\omega\wedge\right)*_{\mathcal{F}}.
\end{eqnarray*} 

Similarly on $\Omega^{u,v}\left(M,\mathcal{F}\right)$ we also have

\begin{eqnarray*}
*_{\mathcal{F}}d_{\mathcal{F}}^{*}&=&\left(-1\right)^{v}d_{\mathcal{F}}*_{\mathcal{F}}\\
d_{\mathcal{F}}^{*}*_{\mathcal{F}}&=&\left(-1\right)^{v+1}*_{\mathcal{F}}d_{\mathcal{F}}.
\end{eqnarray*}

\begin{proposition}
\label{ta30}
Let $\left(M,\mathcal{F}\right)$, and $\omega$ be as in Theorem \ref{ta25}. The Hodge operator $*_{\mathcal{F}}:\Omega^{u,v}\left(M,\mathcal{F}\right)\rightarrow\Omega^{u,p-v}\left(M,\mathcal{F}\right)$ satisfies $$*_{\mathcal{F}}\Delta_{\mathcal{F},\omega}^{u,v}=\Delta_{\mathcal{F},-\omega}^{u,p-v}*_{\mathcal{F}}.$$
\end{proposition}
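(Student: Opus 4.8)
The plan is to transcribe the proof of Proposition~\ref{ta19} to the general $(u,v)$ setting. The crucial observation is that the leafwise star $*_{\mathcal{F}}\colon\Omega^{u,v}(M,\mathcal{F})\to\Omega^{u,p-v}(M,\mathcal{F})$ acts only on the tangential factor $\Lambda^{v}T\mathcal{F}^{*}$ and leaves the conormal degree $u$ untouched; likewise the operators $d_{\mathcal{F}}$, $d_{\mathcal{F}}^{*}$, $\omega\wedge$, and $\omega\lrcorner$ entering the Laplacian act within a fixed conormal degree. First I would write out the restricted Laplacian
$$\Delta_{\mathcal{F},\omega}^{u,v}=(d_{\mathcal{F}}+\omega\wedge)(d_{\mathcal{F}}^{*}+\omega\lrcorner)+(d_{\mathcal{F}}^{*}+\omega\lrcorner)(d_{\mathcal{F}}+\omega\wedge)$$
and apply $*_{\mathcal{F}}$ on the left of $\Delta_{\mathcal{F},\omega}^{u,v}\beta$ for $\beta\in\Omega^{u,v}(M,\mathcal{F})$.

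Next I would push $*_{\mathcal{F}}$ through each factor using the four commutation relations established immediately above, namely $(\omega\lrcorner)*_{\mathcal{F}}=(-1)^{v}*_{\mathcal{F}}(\omega\wedge)$, $*_{\mathcal{F}}(\omega\lrcorner)=(-1)^{v+1}(\omega\wedge)*_{\mathcal{F}}$, $*_{\mathcal{F}}d_{\mathcal{F}}^{*}=(-1)^{v}d_{\mathcal{F}}*_{\mathcal{F}}$, and $d_{\mathcal{F}}^{*}*_{\mathcal{F}}=(-1)^{v+1}*_{\mathcal{F}}d_{\mathcal{F}}$. These are formally identical to the relations used for Proposition~\ref{ta19}, with the leafwise degree $k$ there replaced by $v$. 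Tracking the degree shifts $v\mapsto v\pm1$ as $*_{\mathcal{F}}$ crosses operators that raise or lower the leafwise degree, the accumulated signs combine exactly as in Proposition~\ref{ta19}: each occurrence of $\omega\wedge$ and $\omega\lrcorner$ picks up an overall minus sign, effectively replacing $\omega$ by $-\omega$, while the signs attached to $d_{\mathcal{F}}$ and $d_{\mathcal{F}}^{*}$ cancel in matching pairs across the two halves of the Laplacian. Since $*_{\mathcal{F}}$ fixes $u$ and sends $v\mapsto p-v$, the operator appearing on the right is the Laplacian restricted to $\Omega^{u,p-v}(M,\mathcal{F})$, and one obtains $*_{\mathcal{F}}\Delta_{\mathcal{F},\omega}^{u,v}=\Delta_{\mathcal{F},-\omega}^{u,p-v}*_{\mathcal{F}}$.

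The main obstacle is purely the sign bookkeeping: one must apply each relation at the correct leafwise degree, since $*_{\mathcal{F}}$ alters $v$ as it moves past the operators. This is where the local factorization $\alpha\wedge\beta$ with $\alpha\in\Omega^{u,0}(M,\mathcal{F})$ basic (so $d_{\mathcal{F}}\alpha=0$) and $\beta\in\Omega^{0,v}(M,\mathcal{F})$ does its work: it guarantees that the $(-1)^{u}$ factors produced by commuting leafwise operators past the conormal factor cancel, which is precisely why the four commutation relations carry $u$-independent signs. Granting these relations, the computation is a direct transcription of Proposition~\ref{ta19} with $k\mapsto v$, and no new analytic ingredient is required.
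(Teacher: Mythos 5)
Your proposal is correct and matches the paper's intent exactly: the paper's own proof is the single line ``Similar to Proposition \ref{ta19}, using the formulas above,'' and your write-up is precisely the faithful expansion of that, pushing $*_{\mathcal{F}}$ through the two halves of $\Delta_{\mathcal{F},\omega}^{u,v}$ via the four $v$-graded commutation relations so that the external $(-1)^{v}$-type signs cancel in pairs and only $\omega\mapsto-\omega$ survives. Your remark that the relations carry $u$-independent signs because of the local factorization $\alpha\wedge\beta$ with $\alpha$ basic is exactly the point the paper relies on (via \cite[Lemma 3.4]{lopez2001long}), so nothing is missing.
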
 
\begin{proof}
Similar to Proposition \ref{ta19}, using the formulas above.
Thus the operator $*_{\mathcal{F}}$ maps $\Delta_{\mathcal{F},\omega}^{u,v}$-harmonic forms to $\Delta_{\mathcal{F},-\omega}^{u,p-v}$-harmonic forms.
\end{proof}
Notice on $\Omega^{u,v}\left(M,\mathcal{F}\right)$, we have
\begin{eqnarray*}
**_{\mathcal{F}}=(-1)^{(q-u)(p-v)}*_\perp *_{\mathcal{F}}*_{\mathcal{F}}\\
**_{\mathcal{F}}=(-1)^{(q-u)(p-v)+v(p-v)}*_\perp\\
*_\perp=(-1)^{(q-u)(p-v)+v(p-v)}**_{\mathcal{F}}.
\end{eqnarray*} 
From the Propositions \ref{ta27}and \ref{ta29} we have,

\begin{corollary}
\label{ta31}
Let $\left(M,\mathcal{F}\right)$, and $\omega$ be as in Theorem \ref{ta25}. The Hodge operator $*_{\perp}:\Omega^{u,v}\left(M,\mathcal{F}\right)\rightarrow\Omega^{q-u,v}\left(M,\mathcal{F}\right)$ satisfies $$*_{\perp}\Delta_{\mathcal{F},\omega}^{u,v}=\pm\Delta_{\mathcal{F},\omega}^{q-u,v}*_{\perp}.$$
\end{corollary}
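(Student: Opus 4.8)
The plan is to exploit the factorization of the transversal star operator as a composition of the full Hodge star $*$ and the leafwise star $*_{\mathcal{F}}$. This is exactly the identity
$$*_{\perp}=(-1)^{(q-u)(p-v)+v(p-v)}\,*\,*_{\mathcal{F}}$$
recorded on $\Omega^{u,v}(M,\mathcal{F})$ immediately before the statement. Since each of the two factors is already known to intertwine the relevant Laplacians — Proposition \ref{ta30} gives $*_{\mathcal{F}}\Delta_{\mathcal{F},\omega}^{u,v}=\Delta_{\mathcal{F},-\omega}^{u,p-v}*_{\mathcal{F}}$, and Proposition \ref{ta27} gives $*\Delta_{\mathcal{F},\omega}^{u,v}=\Delta_{\mathcal{F},-\omega}^{q-u,p-v}*$ — the corollary will follow by commuting the Laplacian past each factor in turn and then reassembling $*\,*_{\mathcal{F}}$ back into $*_{\perp}$. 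No new analytic input is needed; the proof is purely algebraic.

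Concretely, writing $N=(q-u)(p-v)+v(p-v)$, I would start from $*_{\perp}\Delta_{\mathcal{F},\omega}^{u,v}=(-1)^{N}\,*\,*_{\mathcal{F}}\,\Delta_{\mathcal{F},\omega}^{u,v}$ and first push the Laplacian through $*_{\mathcal{F}}$ by Proposition \ref{ta30}, obtaining $(-1)^{N}\,*\,\Delta_{\mathcal{F},-\omega}^{u,p-v}\,*_{\mathcal{F}}$. Next I would apply Proposition \ref{ta27} \emph{at bidegree $(u,p-v)$ and with $-\omega$ in place of $\omega$}; this replaces $*\,\Delta_{\mathcal{F},-\omega}^{u,p-v}$ by $\Delta_{\mathcal{F},\omega}^{q-u,v}\,*$, since $p-(p-v)=v$ and $-(-\omega)=\omega$. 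The crucial bookkeeping point is that the two intertwining relations flip the sign of $\omega$ in opposite directions, so the net effect on $\omega$ cancels and the sign of $\omega$ inside $\Delta$ is restored to $+\omega$, which is precisely why the statement has $\Delta_{\mathcal{F},\omega}^{q-u,v}$ (same $\omega$) on the right rather than $\Delta_{\mathcal{F},-\omega}$. The bidegree changes $(u,v)\mapsto(u,p-v)\mapsto(q-u,v)$ likewise compose to the correct target.

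At this point the expression reads $(-1)^{N}\,\Delta_{\mathcal{F},\omega}^{q-u,v}\,*\,*_{\mathcal{F}}$, where $*\,*_{\mathcal{F}}$ again acts on forms in $\Omega^{u,v}(M,\mathcal{F})$; using the factorization once more, $*\,*_{\mathcal{F}}=(-1)^{N}*_{\perp}$, so the two prefactors square to $(-1)^{2N}=1$ and the expression becomes $\Delta_{\mathcal{F},\omega}^{q-u,v}\,*_{\perp}$. Thus the intertwining in fact holds with the sign $+1$, and a fortiori the weaker assertion $*_{\perp}\Delta_{\mathcal{F},\omega}^{u,v}=\pm\Delta_{\mathcal{F},\omega}^{q-u,v}*_{\perp}$ of the corollary is established; consequently $*_{\perp}$ maps $\Delta_{\mathcal{F},\omega}^{u,v}$-harmonic forms to $\Delta_{\mathcal{F},\omega}^{q-u,v}$-harmonic forms. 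The only genuine obstacle is the sign bookkeeping — one must be disciplined about applying Proposition \ref{ta27} at the shifted bidegree $(u,p-v)$ with parameter $-\omega$, rather than at $(u,v)$ with $\omega$ — but since the corollary only claims commutation up to an unspecified sign, the mathematical content reduces entirely to the cancellation of the two $\omega$-sign flips.
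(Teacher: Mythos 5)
Your proposal is correct and follows essentially the same route as the paper: factor $*_{\perp}$ as a sign times $*\,*_{\mathcal{F}}$ on $\Omega^{u,v}(M,\mathcal{F})$, push the Laplacian through $*_{\mathcal{F}}$ via Proposition \ref{ta30} and then through $*$ via Proposition \ref{ta27} applied at bidegree $(u,p-v)$ with $-\omega$, and reassemble $*\,*_{\mathcal{F}}$ into $*_{\perp}$. The only divergence is the final sign --- you obtain $+1$ whereas the paper records $(-1)^{p(q-u)}$, a discrepancy traceable to the paper using two different exponents for the $*_{\perp}\leftrightarrow *\,*_{\mathcal{F}}$ conversion --- which is immaterial since the corollary is stated only up to sign.
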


\begin{proof}
\begin{eqnarray*}
*_{\perp}\Delta_{\mathcal{F},\omega}^{u,v}&=&\left(-1\right)^{v(n-u-v)} *  *_{\mathcal{F}}\Delta_{\mathcal{F},\omega}^{u,v}\\
&=& \left(-1\right)^{v(n-u-v)} *  \Delta_{\mathcal{F},-\omega}^{u,p-v}*_{\mathcal{F}}\\
&=& \left(-1\right)^{v(n-u-v)} \Delta_{\mathcal{F},\omega}^{q-u,v}* *_{\mathcal{F}}\\
&=& \left(-1\right)^{v(n-u-v)}\left(-1\right)^{(q-u)(p-v)+v(p-v)}\Delta_{\mathcal{F},\omega}^{q-u,v} *_{\perp}\\
&=& \left(-1\right)^{p(q-u)}\Delta_{\mathcal{F},\omega}^{q-u,v} *_{\perp}.
\end{eqnarray*}
\end{proof}

Let $h^{u,v}_{\omega}$, $h^{u,v}_{-\omega}$ be the dimensions of the Cohomology groups $H_{\omega}^{u,v}\left(M,d_{\mathcal{F}}\right)$
, and $H_{-\omega}^{u,v}\left(M,d_{\mathcal{F}}\right)$ respectively. We observe that,
Proposition \ref{ta27} implies, $h^{u,v}_{\omega}$=$h^{q-u,p-v}_{-\omega}$, Proposition \ref{ta30} implies, $h^{u,v}_{\omega}$=$h^{u,p-v}_{-\omega}$, and Corollary \ref{ta31} implies, $h^{u,v}_{\omega}$=$h^{q-u,v}_{\omega}$.

\begin{corollary}
\label{hodgediamond}
( Hodge Diamond Structure )For a Riemannian foliation of a manifold 
$$h^{u,v}_{\omega}=h^{q-u,p-v}_{-\omega}=h^{u,p-v}_{-\omega}=h^{q-u,v}_{\omega}.$$
\end{corollary}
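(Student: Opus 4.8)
The plan is to deduce the corollary simply by assembling the three intertwining relations already established, since each of them yields an isomorphism of the relevant spaces of harmonic forms and hence an equality of their dimensions. First I would record that, by Theorem \ref{ta25} together with Corollary \ref{ta26}, each reduced general leafwise Morse-Novikov cohomology group in bidegree $(u,v)$ is isomorphic to the finite-dimensional space $\ker\Delta_{\mathcal{F},\omega}^{u,v}$ of harmonic forms, so that $h^{u,v}_{\omega}=\dim\ker\Delta_{\mathcal{F},\omega}^{u,v}$, with the analogous identity holding for $-\omega$ and in every bidegree. It therefore suffices to compare dimensions of harmonic spaces.

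Next I would note that each of the operators $*$, $*_{\mathcal{F}}$, and $*_{\perp}$ is a fibrewise linear isomorphism, since each squares to $\pm\mathrm{id}$ (Lemma \ref{ta16} and the relations $*_{\mathcal{F}}^{2}=(-1)^{v(p-v)}$ and $*_{\perp}^{2}=(-1)^{u(q+1)}$ used above), and hence acts as an invertible operator on sections. By Proposition \ref{ta27}, $*$ intertwines $\Delta_{\mathcal{F},\omega}^{u,v}$ with $\Delta_{\mathcal{F},-\omega}^{q-u,p-v}$; by Proposition \ref{ta30}, $*_{\mathcal{F}}$ intertwines $\Delta_{\mathcal{F},\omega}^{u,v}$ with $\Delta_{\mathcal{F},-\omega}^{u,p-v}$; and by Corollary \ref{ta31}, $*_{\perp}$ intertwines $\Delta_{\mathcal{F},\omega}^{u,v}$ with $\Delta_{\mathcal{F},\omega}^{q-u,v}$ up to a sign. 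In each case, applying the intertwining relation to a harmonic form shows that its image is again harmonic, so the operator restricts to an isomorphism of the two kernels; the sign in Corollary \ref{ta31} is harmless, since multiplying a Laplacian by $-1$ leaves its kernel unchanged. Passing to dimensions gives the three equalities
$$h^{u,v}_{\omega}=h^{q-u,p-v}_{-\omega},\qquad h^{u,v}_{\omega}=h^{u,p-v}_{-\omega},\qquad h^{u,v}_{\omega}=h^{q-u,v}_{\omega}.$$

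Finally I would chain these together. The first equality gives $h^{u,v}_{\omega}=h^{q-u,p-v}_{-\omega}$; the third equality, applied with $\omega$ replaced by $-\omega$ and $v$ by $p-v$, gives $h^{q-u,p-v}_{-\omega}=h^{u,p-v}_{-\omega}$; and the second equality gives $h^{u,p-v}_{-\omega}=h^{u,v}_{\omega}$, which the third equality identifies with $h^{q-u,v}_{\omega}$. Concatenating these yields
$$h^{u,v}_{\omega}=h^{q-u,p-v}_{-\omega}=h^{u,p-v}_{-\omega}=h^{q-u,v}_{\omega},$$
as claimed. I expect no genuine obstacle: all the analytic content sits in the three preceding intertwining statements, and the only points requiring care are the bookkeeping of bidegrees and signs in the substitutions, together with the preliminary identification of $h^{u,v}_{\omega}$ with $\dim\ker\Delta_{\mathcal{F},\omega}^{u,v}$, which is precisely where the Riemannian hypothesis enters.
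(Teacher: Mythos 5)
Your proposal is correct and follows essentially the same route as the paper, which likewise derives the three equalities $h^{u,v}_{\omega}=h^{q-u,p-v}_{-\omega}$, $h^{u,v}_{\omega}=h^{u,p-v}_{-\omega}$, and $h^{u,v}_{\omega}=h^{q-u,v}_{\omega}$ from Propositions \ref{ta27} and \ref{ta30} and Corollary \ref{ta31} and then concatenates them. You have in fact supplied more detail than the paper does, in particular the identification of $h^{u,v}_{\omega}$ with $\dim\ker\Delta_{\mathcal{F},\omega}^{u,v}$, the invertibility of the star operators, and the observation that the sign in Corollary \ref{ta31} does not affect the kernel.
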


In particular, we consider a diagram of the dimensions of the cohomology classes for a manifold of dimension $n=5$ with $p=2$, and $q=3$.\\\\
\hspace{-2.8in}\includegraphics[scale=1]{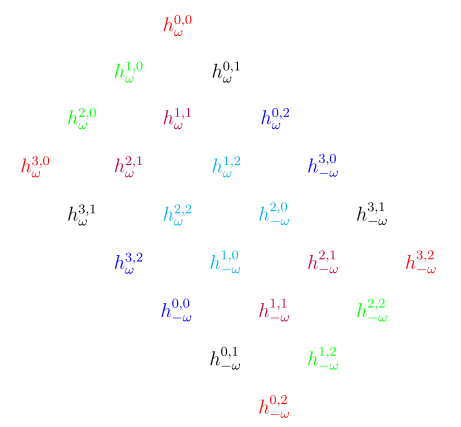}

The dimensions in the same color are equal. 

\newpage
\bibliographystyle{amsplain}
\bibliography{NewbibliographySharif}
\end{document}